\documentclass{amsart}

\usepackage[hmargin=2.5cm,top=2cm,bottom=2.5cm,includehead]{geometry}
\usepackage{amssymb, amsmath,amsthm}
\usepackage[colorlinks,
citecolor=green
]{hyperref}
\usepackage{amsmath,mathrsfs}
\usepackage{graphicx}
\usepackage{enumerate,color,xcolor}
\usepackage{stmaryrd}
\usepackage[normalem]{ulem}
\usepackage[inline]{enumitem}

\newtheorem{thm}{Theorem}[section]
\newtheorem{defi}{Definition}[section]
\newtheorem{lem}{Lemma}[section]

\newtheorem{rmk}{Remark}[section]
\newtheorem{cor}{Corollary}[section]
\newtheorem{prop}{Proposition}[section]

\newcommand\ds{\displaystyle}
\makeatletter
\newcommand{\opnorm}{\@ifstar\@opnorms\@opnorm}
\newcommand{\@opnorms}[1]{%
  \left|\mkern-1.5mu\left|\mkern-1.5mu\left|
   #1
  \right|\mkern-1.5mu\right|\mkern-1.5mu\right|
}
\newcommand{\@opnorm}[2][]{%
  \mathopen{#1|\mkern-1.5mu#1|\mkern-1.5mu#1|}
  #2
  \mathclose{#1|\mkern-1.5mu#1|\mkern-1.5mu#1|}
}
\makeatother
\numberwithin{equation}{section}
\newcommand{\beq}{\begin{equation*}}
\newcommand{\eeq}{\end{equation*}}
\newcommand{\ben}{\begin{equation}}
\newcommand{\een}{\end{equation}}
\newcommand{\beno}{\begin{eqnarray*}}
\newcommand{\eeno}{\end{eqnarray*}}
\let\f=\frac

\newcommand{\N}{{\mathbb N}}
\newcommand{\Z}{{\mathbb Z}}
\newcommand{\R}{{\mathbb R}}

\newcommand{\pa}{\partial}

\newcommand{\ba}{\begin{aligned}}
\newcommand{\ea}{\end{aligned}}
 \def\na{\nabla}
\let\pa=\partial

\def\cD{{\mathcal D}}

\def\cI{{\mathcal I}}
\def\cJ{{\mathcal J}}

\def\cP{{\mathcal P}}

\def\CC{\mathbf{C}}
\def\RR{\mathbf{R}}
\def\NN{\mathbf{N}}
\def\virgp{\raise 2pt\hbox{,}}
\def\cdotpv{\raise 2pt\hbox{;}}

\def\C{\mathop{\mathbb C\kern 0pt}\nolimits}
\def\DD{\mathop{\mathbb D\kern 0pt}\nolimits}
\def\EE{\mathop{{\mathbb E \kern 0pt}}\nolimits}
\def\K{\mathop{\mathbb K\kern 0pt}\nolimits}
\def\N{\mathop{\mathbb N\kern 0pt}\nolimits}
\def\Q{\mathop{\mathbb Q\kern 0pt}\nolimits}
\def\R{\mathop{\mathbb R\kern 0pt}\nolimits}
\def\SS{\mathop{\mathbb S\kern 0pt}\nolimits}
\def\<{\langle}
\def\>{\rangle}

\def\gs{\gtrsim}
\def\ls{\lesssim}
\def\S{\mathbb{S}}

\def\sinc{\operatorname{sinc} }
\newcommand{\br}[1]{\left\langle {#1} \right\rangle}

\newcommand{\rr}[1]{\left( {#1} \right)}

\newcommand{\wei}{{\langle v \rangle}}
\newcommand{\weis}{{\langle v_* \rangle}}

\def\tpn{{\tilde{\cP}_{N}}}

\keywords{Kinetic theory; Boltzmann equation; Fourier-Galerkin spectral method; stability; 
convergence}

\subjclass[2020]{35Q20, 65M15, 65M70, 65M12, 35R09, 82C40}

\begin{document}

\title[A Fourier spectral method for the cutoff Boltzmann equation]{A Fourier spectral method for the cutoff Boltzmann equation: Convergence analysis and numerical simulation}

 \author{Yanzhi Gui}
 \address[Yanzhi Gui]{Department of Mathematical Sciences, Tsinghua University, Beijing, 100084, P.R. China.}
 \email{guiyz22@mails.tsinghua.edu.cn}

 \author{Ling-Bing He}
 \address[Ling-Bing He]{Department of Mathematical Sciences, Tsinghua University, Beijing, 100084, P.R. China.}
 \email{hlb@tsinghua.edu.cn}

 \author{Liu Liu}
 \address[Liu Liu]{Department of Mathematics, The Chinese University of Hong Kong, Hong Kong.}
 \email{lliu@math.cuhk.edu.hk}

\begin{abstract}

This work addresses a central challenge in the numerical analysis of the cutoff spatially homogeneous Boltzmann equation: the development of rigorously justified, accurate numerical schemes. We present (i) a novel Fourier spectral method for the equation with Maxwellian and hard potentials, (ii) the derivation of the first rigorous error estimates for the proposed schemes. Comprehensive numerical experiments validate the theory, confirming the predicted accuracy and illustrating the method's capability to capture solution dynamics, including the approach to equilibrium. The study thus provides a complete framework--from theoretical analysis to practical implementation--for the reliable computation of solutions to this foundational kinetic model.
\end{abstract}

\maketitle

\setcounter{tocdepth}{1}
\tableofcontents

\section{Introduction}
 
The main purpose of this work is to conduct a thorough numerical investigation of the spatially homogeneous Boltzmann equation for two important classes of interactions: Maxwellian potentials and hard potentials with angular cutoff. This involves:  
\begin{itemize}
  \item[1.] The development of a novel numerical approximation method tailored to these potential types;
  \item[2.] Rigorous error estimates for the proposed numerical scheme, establishing its accuracy and reliability;
  \item[3.] Implementation of simulations to illustrate the method’s performance and to explore quantitative properties of solutions.
\end{itemize}
 
This study is significant because it advances the computational toolkit for the Boltzmann equation—a fundamental model in kinetic theory that describes the evolution of particle systems through binary collisions. Obtaining accurate numerical solutions is essential for analyzing complex phenomena in rarefied gas dynamics, plasma physics, and other areas of statistical physics where analytical solutions are scarce or unavailable. By developing a novel numerical method with rigorous error analysis, this work provides a reliable framework for simulating the behavior and fundamental properties of solutions, thereby bridging theoretical kinetic theory with practical computational science.

\subsection{A brief review on the Boltzmann equation}

The Boltzmann equation, formulated by Ludwig Boltzmann in the 1870s, governs the time evolution of the particle distribution function \( f(t, x, v) \) in phase space, representing the probability density of finding a particle at position \(x\) with velocity \(v\) at time \(t\). In the absence of external forces and spatial inhomogeneity, the spatially homogeneous Boltzmann equation reads:

\ben\label{1}
\frac{\partial f}{\partial t}(t, v) = Q(f, f)(t, v), \quad v \in \mathbb{R}^d,
\een
where the collision operator \( Q \) is a bilinear integral operator that accounts for binary elastic collisions between particles. It can be expressed in its gain-loss form as:
\[
Q(g, f)(v) = \int_{\mathbb{R}^d} \int_{\mathbb{S}^{d-1}} B(|v - v_*|, \cos\theta)\big[ g(v'_*)f(v')  -  g(v_*)f(v) \big] \, d\sigma \, dv_*=:Q^+(g,f)-Q^-(g,f).
\]

The term \(Q^+(g,f)\) represents the gain term, modeling particles that enter the velocity state \(v\) after a collision, while \(Q^-(g,f)\) represents the loss term, modeling particles that leave velocity \(v\) due to a collision.

Several remarks are in order to clarify the components of this operator:
\begin{enumerate}
  \item Here \( v, v_* \) are the pre-collision velocities of two particles, and \( v', v'_* \) are their corresponding post-collision velocities. These are determined by the conservation laws of momentum and kinetic energy for elastic collisions. More precisely, they satisfy the following parametric relations:
  \beq
  v^{\prime}=\frac{v+v_*}{2}+\frac{\left|v-v_*\right|}{2} \sigma,\quad v_*^{\prime}=\frac{v+v_*}{2}-\frac{\left|v-v_*\right|}{2} \sigma,\quad \sigma \in \mathbb{S}^{d-1},
  \eeq
  where the unit vector \(\sigma\) parameterizes all possible post-collision directions for a given relative velocity.
  \item The angle \( \theta \) is the deflection angle, defined by the relation \(\cos\theta:=\frac{v-v_*}{|v-v_*|}\cdot \sigma\), where \(\cdot\) denotes the Euclidean dot product. This angle characterizes the scattering direction.
  \item The function \( B \) is the collision kernel or cross-section, which encodes the microscopic details of the interaction potential between particles. It is a non-negative function of the relative speed \(|v-v_*|\) and the scattering angle \(\theta\). In this work, we consider a specific class of kernels often used in theoretical and numerical studies:
  \beq
  B\left(\left|v-v_*\right|, \cos \theta\right)=\Phi(|v-v_*|)\,b(\cos\theta), \quad \text{with} \quad \Phi(|v-v_*|)=|v-v_*|^\gamma\, \text{and}\, b \equiv 1.
  \eeq
  where \(\gamma\) is a parameter in \([0,1]\). The function \(b(\cos\theta)\), which governs the angular scattering, is typically assumed to be integrable over the sphere \(\mathbb{S}^{d-1}\) (this is known as Grad's angular cut-off assumption). For simplicity in our numerical analysis and simulations, we often take the isotropic case $b \equiv 1$.
\end{enumerate}

\begin{rmk} 
Two key families of collision kernels considered in this work, which fall under the general form \(\Phi(|v-v_*|)b(\cos\theta)\), are:
\begin{itemize}
  \item \textbf{Maxwellian molecules:} In this idealized case, the kernel's dependence on relative speed vanishes. We have \(\gamma = 0\), so that \( \Phi(|v-v_*|) = 1\). Thus, \( B = b(\cos\theta) \), independent of the relative speed. 
  \item \textbf{Hard-sphere model:} This corresponds to the case \(\gamma = 1\), where the collision frequency is proportional to the relative speed. Thus, the kernel is \( B = |v-v_*| \). We note that the standard hard-sphere model also has a specific angular dependence \(b(\cos\theta) \propto |\cos\theta|\), but under the cut-off assumption, \(b\) is taken to be an integrable function, often a constant for simplicity. Models with \(\gamma \in (0,1)\) are typically referred to as \emph{hard potentials} (with cut-off).
\end{itemize}  
The Maxwellian case (\(\gamma=0\)) often admits analytical simplifications, while the hard-sphere/hard potential cases (\(\gamma>0\)) present greater numerical challenges due to the velocity dependence of the collision frequency, making them crucial benchmarks for any proposed numerical method.
\end{rmk}

The fundamental properties of the spatially homogeneous Boltzmann equation underpin both its physical relevance and the mathematical challenges it presents. These properties are summarized as follows:

\subsubsection{Conservation Laws}
Due to the microscopic conservation of mass, momentum, and energy during binary elastic collisions, solutions to the Boltzmann equation formally conserve these macroscopic quantities. For any solution \( f(t, v) \) with initial data \( f_0(v) \), the following moments remain constant in time:
\ben\label{cons}
\int_{\R^3} f(t, v) \, \varphi(v) \, dv = \int_{\R^3} f_0(v) \, \varphi(v) \, dv \quad\text{for the collision invariants}\quad \varphi(v) = 1, v_1, v_2, v_3, \frac{|v|^2}{2},
\een
where \( \varphi(v)=1, v_i, \frac{|v|^2}{2} \) correspond to mass, the three components of momentum, and kinetic energy, respectively. This conservation law can be derived directly from the weak formulation of the collision operator. For any suitable test function \(\varphi(v)\), we have:
\ben
\int_{\R^3} Q(f,f)(v)\, \varphi(v) \, dv= \frac{1}{2} \iint_{\R^3 \times \R^3} \int_{\S^2}  B\left(\left|v-v_*\right|, \sigma\right) f f_* \big(\varphi(v')+\varphi(v'_*)-\varphi(v)-\varphi(v_*)\big) \, d\sigma \, dv_{*} \, dv.
\een
The right-hand side vanishes identically precisely when \(\varphi(v)\) is a linear combination of the five collision invariants \(1, v, |v|^2\), due to the microscopic conservation laws \( \varphi(v') + \varphi(v'_*) = \varphi(v) + \varphi(v_*)\).

\subsubsection{Entropy Structure and  H-Theorem}
The Boltzmann equation exhibits a profound irreversible thermodynamic structure. Defining the macroscopic hydrodynamic fields associated with the distribution \( f(t, v) \)—namely, the density \( \rho_f \), bulk velocity \( u_f \), and temperature \( T_f \)—we have:
\begin{equation} \label{cco}
\rho_f(t):= \int_{\R^3} f(t,v) \, dv, \quad
u_f(t) := \frac{1}{\rho_f(t)} \int_{\R^3} v \, f(t,v) \, dv, \quad
T_f(t) := \frac{1}{3 \rho_f(t)} \int_{\R^3} |v-u_f(t)|^2\, f(t,v) \, dv.
\end{equation}
These quantities are conserved according to (\ref{cons}). Given these fields, we can define the local Maxwellian (or Gibbs equilibrium state):
\ben\label{MaxGen}
\mu_{\rho_f,u_f,T_f}(v) = \frac{\rho_f}{(2\pi T_f)^{3/2}} \, \exp\left({-\frac{|v-u_f|^2}{2T_f}}\right),
\een
which shares the same mass, momentum, and energy as \( f \).  

The system's irreversibility is captured by the {\it relative entropy} (or Boltzmann \(H\)-functional relative to the local Maxwellian):
\ben\label{DefHt}
H(f|\mu_{\rho_f,u_f,T_f})(t) := \int_{\R^3} \left[ f(t,v) \log \left( \frac{f(t,v)}{\mu_{\rho_f,u_f,T_f}(v)} \right) - f(t,v) + \mu_{\rho_f,u_f,T_f}(v) \right] \, dv.
\een
This functional is non-negative and serves as a measure of how far the distribution \( f \) is from its local equilibrium \( \mu_{\rho_f,u_f,T_f} \).

Boltzmann's celebrated {\it H-Theorem} states that for solutions of the homogeneous equation, this relative entropy is non-increasing:
\ben\label{entropyeq}
\frac{d}{dt} H(f|\mu_{\rho_f,u_f,T_f})(t) = - D(f(t,\cdot)) \le 0,
\een
where \( D(f) \) is the {\it entropy dissipation rate}, defined by:
\ben\label{D(f)}
D(f) := - \int_{\R^3} Q(f,f)(v)\, \log f(v) \, dv \ge 0.
\een
Using the symmetries of the collision operator, \( D(f) \) can be rewritten in the following symmetric and revealing form:
\[
D(f) = \frac{1}{4} \iint_{\R^3 \times \R^3} \int_{\S^2} B\left(\left|v-v_*\right|, \sigma\right) \left( f' f'_* - f f_* \right) \log\frac{f' f'_*}{f f_*} \, d\sigma \, dv_{*} \, dv.
\]
This expression makes clear that \( D(f) \ge 0 \) because the sign of the factor \( (f' f'_* - f f_*) \) is always opposite to the sign of \( \log({f' f'_*}/{f f_*}) \). The dissipation vanishes, \( D(f) = 0 \), if and only if \( f' f'_* = f f_* \) for almost all collisions, which is precisely the condition that \( f \) is a local Maxwellian (Boltzmann's Lemma). Consequently, solutions are expected to converge (in entropy sense) to the unique global Maxwellian equilibrium \( \mu_{\rho_0, u_0, T_0} \) determined by the initial mass, momentum, and energy as \( t \to \infty \).
\smallskip

The interplay between the conservation laws (\ref{cons}) and the entropy dissipation inequality (\ref{entropyeq}) forms the core of the equation's qualitative theory. These properties are not only physically essential but also provide critical guidelines for the design and analysis of robust numerical methods, which must preserve or correctly approximate these invariants and the entropy decay structure.
\medskip

In what follows, we will provide a brief review of previous work from two complementary perspectives: the analytical perspective, focusing on well-posedness, regularity, and moment bounds; and the numerical perspective, focusing on discretization strategies, conservation properties, and convergence analysis.

 \subsubsection{Existing Analytical Results} 
The rigorous mathematical study of the spatially homogeneous Boltzmann equation for hard potentials with angular cutoff began with Carleman \cite{Carleman1,Carleman2}, who established the existence and uniqueness of solutions in the function space \( L^1 \cap L^\infty \), assuming initial data with pointwise moment bounds, specifically for hard-sphere interactions. A more general Cauchy theory was subsequently developed by Arkeryd \cite{Arkeryd1,Arkeryd2}, who extended these results to solutions in the space \( L^1 \cap L \log L \), requiring only \( L^1 \) moment bounds on the initial data. This framework significantly broadened the class of admissible initial distributions. The theory reached a level of optimality in the work of Mischler and Wennberg \cite{MW} (see also Lu \cite{Lu1}), where sharp conditions for well-posedness and moment propagation were established. Their results effectively close the classical theory for cutoff hard potentials.

The study of moment bounds for the spatially homogeneous   equation has a rich history. The seminal work of Elmroth \cite{Elmroth} first proved the propagation of existing polynomial moments for ``variable hard sphere" models, utilizing Povzner-type inequalities, a fundamental analytical tool introduced by Povzner \cite{Povzner}. A key result was then established by Desvillettes \cite{Desvillettes}, who demonstrated that for the same model,  any  polynomial moment appears instantaneously (is generated) provided the initial datum possesses a moment of order strictly greater than two. This ``moment generation" phenomenon underscores the qualitative smoothing effect of hard potential collisions. These results were later refined to optimality by Mischler and Wennberg \cite{MW}, who established sharp growth rates and conditions for both the propagation and appearance of polynomial moments.

\subsubsection{Existing Numerical Results} 
It has been a challenging task to numerically solve the Boltzmann equation, owing to its high dimensionality, complex, non-linear and non-local collision terms. The numerical approximation for the Boltzmann equation mainly falls into two mainstream numerical methods: direct simulation of Monte-Carlo and deterministic methods. The Direct Simulation Monte-Carlo (DSMC) method \cite{Bi,Na} is a stochastic, particle-based approach that has been widely used, thanks to its easy implementation and low computational cost. However, due to its strong statistical noise, the DSMC method suffers from slow convergence and becomes prohibitively expensive when simulating non-steady and low-speed flows. In contrast, deterministic approaches yield more accurate results compared to stochastic ones.

Among the deterministic approaches, a broad range of numerical schemes has been developed, including discrete velocity methods (DVM) and spectral methods. This paper focuses specifically on Fourier spectral methods pioneered by \cite{PP96,PR00}, which provide a noise-free framework by discretizing the collision operator over a truncated and periodized velocity domain, achieving a computational complexity of $O(N^{2d})$. Subsequent work \cite{MP,FMP} accelerated the algorithm to $O(M\,N^d\log N)$ by exploiting a Carleman-like representation of the collision operator, where $M$ denotes the total number of discretization points on the sphere. This acceleration, however, is restricted to certain special collision kernels, such as the three-dimensional hard-sphere model. For a general class of collision models, including non-cutoff kernels, efficient Fourier spectral schemes with computational complexity $O(M\,N^{d+1}\log N)$ have been developed in \cite{GHHH,HQ} through the use of low-rank approximations. We also mention that due to the truncation of the domain, some properties such as momentum and energy conservation are lost, which the authors in \cite{GT} handle it by using Lagrange multipliers after every time step. 

Despite the progress in algorithmic development, the theoretical analysis of Fourier spectral methods for the Boltzmann equation remains limited. Most existing theoretical studies confine their attention to bounding the approximation error of the collision operator itself, without addressing the stability and error propagation of the fully discrete solution. In particular, prior works \cite{FM,HQY} establish convergence toward the solution of the \emph{periodized} problem on the torus, rather than toward the physically relevant whole-space solution. A recent work \cite{FGH} on the spatially homogeneous Landau equation demonstrates that rigorous error estimates comparing the numerical and exact whole-space solutions are achievable for kinetic equations of this type, providing a key motivation for the present work.

\subsection{Notations and function spaces}  We begin with the following notation:
 
 \noindent$\bullet$ We use the notation $a \lesssim b (a \gtrsim b)$ to indicate that there is a uniform constant $C$, which may be different on different lines, such that $a \leq C b (a \geq C b)$. We use the notation $a \sim b$ whenever $a \lesssim b$ and $b \lesssim a$.
 
 \noindent$\bullet$  We denote $C_{a_1, a_2, \cdots, a_n}$ (or $C\left(a_1, a_2, \cdots, a_n\right)$) by a constant depending on the parameters $a_1, a_2, \cdots, a_n$.

  \noindent$\bullet$ $\mathbf{1}_{\Omega}$ is the characteristic function of the set $\Omega$. We use $\br{f, g}:=\int_{\R^3}f(v)g(v)dv$ to denote the inner product of $f, g$. The Japanese bracket is denoted as $\wei:=\sqrt{1+|v|^2}$.

   \noindent$\bullet$ For a multi-index $\alpha=(\alpha^1,\alpha^2,\alpha^3)\in \Z_{\ge 0}^3$, we denote the partial derivative operator with respect to $\alpha$ as $\pa^\alpha=\pa^{\alpha^1}_1\pa^{\alpha^2}_2\pa^{\alpha^3}_3$. Also, we denote the order $|\alpha|=|\alpha_1|+|\alpha_2|+|\alpha_3|$, and for two multi-indices $\alpha$ and $\beta$, we define $\alpha\le \beta$ by $\alpha^1\le \beta^1,\alpha^2\le \beta^2,\alpha^3\le \beta^3$.
 \smallskip

In the subsequent sections, we present various necessary function spaces used throughout this work. In what follows, we assume that $f=f(v)$. 

\begin{itemize}
\item[(1).] The normalized space $L^1_{1,0,1}$ is defined as
\beq
L_{1,0,1}^1:=\left\{0\le f \in L^1 \mid \rho_f=1, u_f=0, T_f=1\right\}.
\eeq The $L \log L$ space is defined as
\beq
L \log L :=\left\{f(v)\left|\|f\|_{L \log L}=\int_{\mathbb{R}^3}|f| \log (1+|f|) d v<\infty\right.\right\}.
\eeq
\item[(2).] For $p \in [1, \infty)$ and $l \in \mathbb{R}$, we define the weighted $L_l^p$ space as follows:
\[
L_l^p := \left\{ f(v) \, \bigg| \, \| f \|_{L_l^p} = \left( \int_{\mathbb{R}^3} |f(v)|^p \langle v \rangle^{lp} \, dv \right)^{\frac{1}{p}} < \infty \right\},
\]
and for an integer $N \geq 0$, the general weighted Sobolev space $W_l^{N, p}$ is given by:
\[
W_l^{N, p} := \left\{ f(v) \, \bigg| \, \| f \|_{W_l^{N, p}} = \left( \sum_{|\alpha| \leq N} \| \partial_v^\alpha f(v) \|_{L^p_l}^p \right)^{\frac{1}{p}} = \left( \sum_{|\alpha| \leq N} \int_{\mathbb{R}^3} |\partial_v^\alpha f(v)|^p \langle v \rangle^{lp} \, dv \right)^{\frac{1}{p}} < \infty \right\},
\]
with the multi-index $\alpha = (\alpha_1, \alpha_2, \alpha_3)$, where $|\alpha| = \alpha_1 + \alpha_2 + \alpha_3$ and $\partial_v^\alpha = \partial_{v_1}^{\alpha_1} \partial_{v_2}^{\alpha_2} \partial_{v_3}^{\alpha_3}$. In particular, if $p = 2$, we define $H^N_l := W_l^{N,2}$ and $\| f \|_{H^N_l} := \| f \|_{W_l^{N,2}}$.
\item[(3).] For real numbers $k$ and $l$, the weighted Sobolev space $H_l^k$ is defined as follows:
\beq
H_l^k := \left\{ f(v) \mid \|f\|_{H_l^k} = \left\| \langle D \rangle^k \langle \cdot \rangle^l f \right\|_{L^2} < +\infty \right\},
\eeq
where $a(D)$ represents a pseudo-differential operator characterized by the symbol $a(\xi)$, given by:
\beq
(a(D) f)(x) := \frac{1}{(2 \pi)^3} \int_{\mathbb{R}^3} \int_{\mathbb{R}^3} e^{i(x-y) \xi} a(\xi) f(y) \, d y \, d \xi.
\eeq
Similarly, the homogeneous Sobolev space $\dot{H}^k$ is defined as:
\beq
\dot{H}_l^k := \left\{ f(v) \mid \|f\|_{\dot{H}_l^k} = \left\| |D|^k \langle \cdot \rangle^l f \right\|_{L^2} < +\infty \right\},
\eeq
indicating that the relationship $\|f\|_{H^k_l} \sim \|f\|_{L^2_l} + \|f\|_{\dot{H}^k_l}$ holds when $k > 0$.
\end{itemize}

\subsection{Goals and difficulties.} This work investigates a central challenge in the numerical analysis of the spatially homogeneous Boltzmann equation: the development and rigorous analysis of accurate numerical schemes with provable error estimates. Specifically, we aim to derive the first error estimates for the proposed schemes, quantifying their convergence rates in appropriate norms, and validate the theoretical results through numerical experiments, demonstrating the schemes' accuracy in practical computations.
\smallskip

Mathematically, the above problems can be formulated as follows:
\smallskip

\noindent{\bf Question (I).} Suppose $f=f(t,v)$ is a global smooth solution to \eqref{1} with smooth initial data $f_0$. Given a final time $T>0$ and a tolerance $0<\epsilon\ll1$, can we systematically construct a numerical solution $f^{Num}=f^{Num}(t,v)$ such that 
\beno \sup_{t\in[0,T]}\|f(t,\cdot)-f^{Num}(t,\cdot)\|_{L^2(\R^3)}\ls \epsilon? \eeno 
 
\noindent{\bf Question (II).} How can we design a numerical scheme that strictly mirrors its theoretical formulation while remaining computationally realizable?
\medskip

To answer these questions, one must overcome a fundamental structural mismatch: the exact solution of the Boltzmann equation is natively defined on the unbounded whole space $\R^3$, whereas the Fourier spectral method mathematically requires a bounded periodic domain (a torus $\cD_L=[-L,L]^3$). Before presenting our approach, we briefly review how previous works have addressed these issues.

\subsubsection{Fourier-Galerkin spectral methods for the periodic Boltzmann equation} Previous Fourier-Galerkin spectral methods for the Boltzmann equation, extensively developed since the original work \cite{PP96, PR00}, are typically formulated directly on the torus. More precisely, they consider the periodic problem on the torus $\cD_L=[-L,L]^3$ and formulated the periodic Boltzmann operator $Q_p$ as follows:
\ben\label{defQp}
Q_p(g,h):=\int_{[-L,L]^3\times\S^2}B(|z|,\sigma)\mathbf{1}_{|z|\le 2R} (g(v+z^-)h(v+z^+)-g(v+z)h(v))\,dz\,d\sigma=:Q_p^+(g,h)-Q_p^-(g,h),
\een
where
\beq
z=v_*-v,\ \ \ z^+=\f{z+|z|\sigma}{2},\ \ \ z^-=\f{z-|z|\sigma}{2},
\eeq
with $L=\f{3+\sqrt{2}}{2}R$ for any functions $g,h$ defined on the torus $\cD_L$. Here the subscript $p$ indicates periodicity. It has been established that for functions compactly supported in the ball $B(0,R)$, the periodic operator $Q_p$ exactly coincides with the original operator $Q$. The Fourier-Galerkin spectral method is then applied to this periodic operator as:
\ben
\begin{cases}
	\partial_t f_N\,=\,\cP_N Q_p(f_N,f_N), \\
	f_N|_{t=0}=\cP_N(f_0\,\mathbf{1}_{\cD_L}),
\end{cases}
\een
where $\cP_N$ is the orthogonal projection onto the finite-dimensional space $\mathbb{P}_N:=span\{e^{-i\f{\pi}{L}k\cdot v}|k\in\cJ_N\}$ in $L^2(\cD_L)$ with $\cJ_N:=\llbracket -N, N-1 \rrbracket^3$. More precisely, for any periodic function $f\in L^2(\cD_L)$, we have  
\ben\label{defcPN}
\cP_Nf=(2L)^{-3}\sum_{k\in\cJ_N}\hat{f}(k)e^{i\f{\pi}{L}k\cdot v}, \quad \text{where} \quad \hat{f}(k):=\int_{\cD_L}f(v)\,\exp\left(-i\f{\pi}{L}k\cdot v\right)\,d v.
\een
By doing so, one can exactly calculate the Fourier coefficients of the collision operator $Q_p$, ensuring the numerical solution always remains within the trigonometric polynomial space $\mathbb{P}_N$.

Later theoretical advances \cite{FM,HQY} established stability and convergence results for the numerical solution towards the exact solution of the \emph{periodized} problem $\pa_t f^R=Q_p(f^R,f^R)$. However, the asymptotic long-time behavior of the periodic equation leads to a constant equilibrium state, which differs essentially from the Maxwellian equilibrium of the whole-space problem. Furthermore, standard estimates for the whole-space collision operator rely heavily on the regular/singular pre-post collisional change of variables $(v, v_*) \to (v', v_*')$. On the periodic torus $\cD_L$, this coordinate transformation is geometrically invalid.

\subsubsection{Progress on numerical analysis of spatially homogeneous Landau equation}   To bridge the gap between $\R^3$ and $\cD_L$, a ``two-step'' framework was recently proposed for the Landau equation in \cite{FGH}. The first step introduces an artificially truncated equation, where the collision operator is modified by a smooth truncation function $\psi^R\in C^\infty_0(B(0,R))$ that localizes the interactions to a ball of radius \( R \):
\ben
\pa_t f^R=Q(f^R\,\psi^R\,,\,f^R\,\psi^R)\,\psi^R,
\een
where $\psi^R$ is a radial bump function satisfying $\mathbf{1}_{\{|v|\le R/2\}}\le \psi^R\le \mathbf{1}_{\{|v|\le R\}}$. By carefully analyzing the error introduced by this truncation, the authors showed that the solution $f^R$ to the truncated equation approximates the original solution $f$ in suitable polynomial weighted norms in $\R^3$ as $ R \to \infty $. Constrained by the equation, the solution $f^R$ is strictly supported in $B(0,R)$ and automatically coincides with the solution $f_p^R$ to the analogous truncated equation formulated on the torus:
\ben \pa_t f^R_p=Q_p(f^R_p\,\psi^R\,,\,f^R_p\,\psi^R)\,\psi^R.\een
Thus, this first step provides a bridge between the whole-space problem and a localized version that can be treated on the torus.

The second step involves the discretization and construction of the numerical scheme, followed by bounding the distance between the solution of the truncated equation and the numerical approximation on the torus. The numerical scheme is proposed as:
\ben
\pa_t f^R_N=\cP_N\left(\cP_N\left(Q_p\left(\cP_N(f^R_N\,\psi^R)\,,\,\cP_N(f^R_N\,\psi^R)\right)\right)\,\psi^R\right).
\een

Through this approach, the numerical solution $f^R_N$ is confined to the finite space $\mathbb{P}_N$, and the discrete collision operator can be evaluated exactly. For any fixed torus domain, it was proved that the error between $f^R_N$ and $f^R_p$ converges to zero as $ N \to \infty $. By managing the errors in both steps, the convergence of the numerical solution to the whole-space Landau solution is established as $ R \to \infty $ and $ N \to \infty $. 

While this ``two-step'' strategy elegantly connects the domains, the intermediate truncated system necessitates a substantial amount of highly complex technical analysis. The polynomial weighted norms used in the first step destroy the periodic structure required in the second step, making it difficult to apply the similar analysis to the second step.

\subsection{Strategies and main results.}

In this work, we propose a \textbf{refined, direct strategy} that significantly streamlines the proof. Instead of constructing an complex intermediate truncation PDE, we directly utilize the truncated whole-space exact solution, $f^R := f\psi^R$, as our continuous reference function on the torus $\cD_L$. By adopting $f\psi^R$ directly, we entirely bypass the heavy analytical analysis of the first step encountered in \cite{FGH}. The theoretical effort is then focused on a single step: bounding the difference between the numerical solution and the truncated solution $f\psi^R$ on the periodic domain.

To successfully execute this strategy, we introduce two basic tools aimed at capturing the behavior of the numerical solution more accurately near the boundary and within the Fourier mode.

\noindent $\bullet$ \underline{Projection operator.} The smoothing  projection operator $\tpn$ and $\tpn^{\f12}$  on the torus $\cD_L$ are defined as 
\ben\label{deftpn}
\tpn f:=\f{1}{(2L)^3}\sum_{k\in\Z^3}\hat{f}(k)\,\Phi\left(\f{k}{N}\right)\,e^{i\f{\pi}{L}k\cdot v},\ \ \tpn^{\f12} f:=\f{1}{(2L)^3}\sum_{k\in\Z^3}\hat{f}(k)\,\sqrt{\Phi\left(\f{k}{N}\right)}\,e^{i\f{\pi}{L}k\cdot v},
\een
where $\Phi$ is a suitably chosen mode-truncation function satisfying:
\ben
\Phi\in C^{\infty}(\R^3), \ \ \ \mathbf{1}_{[-0.9,0.9]^3}(x)\le \Phi(x)\le \mathbf{1}_{[-1,1]^3}(x),\ \ \ \sqrt{\Phi}\in C^{\infty}(\R^3).
\een

\noindent $\bullet$ \underline{Periodic weight function.} For any integer $k\ge 0$, we define the positive periodic weight function $m_k\in C^\infty(\cD_L)$ by:
\ben\label{eq:mk}
m_k(v)=\begin{cases}
\wei^k, & |v|\le R,\\
\br{\psi^{2R}(v)\,R+(1-\psi^{2R}(v))\, (R+1)}^k, & R<|v|< 2R,\\
\br{R+1}^k, & |v|\ge 2R.
\end{cases}
\een

Now we can introduce the numerical solution to the equation \eqref{1}:
\begin{defi}[Numerical solutions to \eqref{1}]   We have
\begin{itemize}
\item The numerical solution $f^R_N$ to \eqref{1} is defined as the exact solution to the following discrete system on the torus:
\ben\label{eq:num}
\begin{cases}
  \partial_t f^R_N=Q^R_N(f^R_N,f^R_N), \\[0.5em] f^R_N|_{t=0}=\cP_N(f_0\, \psi^R\,\mathbf{1}_{\mathcal{D}_L}).
\end{cases}
\een
Here $Q_N^R$ is the \textit{truncated periodic collision operator in Fourier space} defined by
\ben\label{defQNR} Q_N^R(g,h):=\tpn^{\f12}\left(\tpn^{\f12}\left(Q_p(G_N,H_N )\right)\, \psi^R\right), \een   
where inputs are defined as $G_N:=\tpn(g\psi^R),H_N:=\tpn(h\psi^R)$, and $Q_p$ is given by \eqref{defQp}. 
\item By zero-extending $f^R_N$ outside $\cD_L$:
\ben\label{eq:fN}
f_N(t,v)\,:=\,
\left\{\begin{array}{ll}
      \ds   f^{R}_N(t,v), & {\rm if }\, v\in \cD_L,\\[0.5em]
         \ds 0, &  {\rm if }\, v\in \R^3\setminus \cD_L,
\end{array}\right.
\een
we call $f_N$ the \textit{global numerical approximation on $\mathbb{R}^3$} to the exact solution of the original equation. 
\end{itemize}
\end{defi}

\begin{rmk} \textnormal{Two remarks are in order:}

 \noindent \underline{$(i).$}  \textnormal{The operators $Q_p,Q^R_N,\cP_N,\tpn$ are valid only for the functions defined on the torus $\cD_L$ with the geometric relation $L=\f{3+\sqrt{2}}{2}R$  consistently maintained. Since the truncation function $\psi^R$ is compactly supported in $B(0,R)\subset [-L,L]^3$, any function $f$ defined on $\R^3$ multiplied by $\psi^R$ constitutes a valid periodic function on $\cD_L$. While the exact conservation laws, entropy decay, and non-negativity are no longer strictly preserved due to the truncation and spectral approximations, this framework enables a rigorous stability study.}
	\smallskip

 \noindent \underline{$(ii).$}   \textnormal{The outer two layers of projection in \eqref{defQNR} ensure that our theoretical formulation identically matches the actual executable spectral algorithm. The inner projection $\tpn^{\f12} (Q_p)$ enables us to employee the Fourier spectral method to calculate the collision operator, while the outer projection $\tpn^{\f12}$ guarantees that the final output remains confined to the finite space $\mathbb{P}_N$. This directly addresses {\bf Question (II)}. Moreover,  the nested composition is geometrically designed to satisfy $\tpn^{\f12}\circ \tpn^{\f12}=\tpn$. This structure allows us to fully use the coercivity of the collision operator $Q$ during the estimates.
 }
\end{rmk}

\subsubsection{Main result} We first state a proposition regarding the exact solution to the equation \eqref{1}.

\begin{prop}\label{thm:fHnl} Suppose that the initial data $0\le f_0\in L^1_{1,0,1}(\R^3)\cap L\log L(\R^3)$  to \eqref{1}
satisfies
\ben\label{TAES}
 f_0\in \bigcap_{\ell\ge 0} H^1_{\ell}.
\een
The equation \eqref{1} admits a unique global solution $f\in L^\infty([0,\infty);H^1_\ell(\R^3))$ with initial data $f_0$ for any positive number $\ell$.
\end{prop}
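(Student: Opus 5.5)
Existence and uniqueness in $L^1$ with moments will be taken from the classical Cauchy theory for cutoff hard and Maxwellian potentials. Since $f_0\in L^1_{1,0,1}\cap L\log L$ and $f_0\in H^1_\ell\subset L^1_{\ell'}$ for every $\ell'$ (weighted $L^2$ with large weight controls weighted $L^1$), the results of Arkeryd \cite{Arkeryd1,Arkeryd2} and Mischler--Wennberg \cite{MW} (see also \cite{Lu1}) give a unique global nonnegative mild solution $f\in C([0,\infty);L^1_2)$ that conserves mass, momentum and energy and does not increase entropy. The Povzner/Elmroth propagation \cite{Elmroth,MW} gives $\sup_{t\ge0}\|f(t)\|_{L^1_{k}}<\infty$ for every $k$; for hard potentials these bounds are uniform in time, and in the Maxwellian case $\gamma=0$ they are uniform as well, since the moment ODEs are closed and stay bounded. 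It then remains to propagate the $H^1_\ell$ regularity with bounds uniform in time, after which uniqueness in $L^\infty_tH^1_\ell$ follows from uniqueness in the $L^1_2$ class.

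The $H^1_\ell$ estimate will be an energy estimate. Split $Q=Q^+-Q^-$ with $Q^-(f,f)=f\,(f*|\cdot|^\gamma)\,|\S^2|$ and write $L(v):=|\S^2|(f*|\cdot|^\gamma)(v)$. Since mass and energy are conserved, $L(v)\gtrsim\wei^\gamma$; the hard potential case uses the standard lower bound, and in the Maxwellian case $L$ is constant. For $|\alpha|\le1$, differentiate the equation. Derivatives commute with the collision operator in the sense $\pa^\alpha Q(f,f)=\sum_{\beta\le\alpha}Q(\pa^\beta f,\pa^{\alpha-\beta}f)$, by translation invariance. Testing against $\pa^\alpha f\wei^{2\ell}$ then gives
\[
\tfrac12\tfrac{d}{dt}\|\pa^\alpha f\|^2_{L^2_\ell}+\int L\,|\pa^\alpha f|^2\wei^{2\ell}\,dv
=\sum_{\beta\le\alpha}\br{Q^+(\pa^\beta f,\pa^{\alpha-\beta}f),\pa^\alpha f\wei^{2\ell}}-\br{f\,\pa^\alpha L,\pa^\alpha f\wei^{2\ell}}.
\]
The last term is harmless: $|\pa^\alpha L|\lesssim \|f\|_{L^1_1}\wei^{\gamma}$ (for $\gamma<1$ the derivative of $|z|^\gamma$ is $|z|^{\gamma-1}$, which is locally integrable, so we use $\|f\|_{L^\infty\cap L^1}$-type bounds, or simply move the derivative onto $f$ via $\pa L=L_\gamma*\pa f$). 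Thus it is bounded by $\|f\|_{L^2_{\ell+\gamma/2}}\|\pa^\alpha f\|_{L^2_{\ell+\gamma/2}}$ times moment constants.

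The main obstacle is the gain term, which must be absorbed by the coercive loss term $\int\wei^{\gamma}|\pa^\alpha f|^2\wei^{2\ell}$. For this I will use the known $L^2$ weighted estimates for the cutoff gain operator. In the form used by Mischler--Wennberg and Mouhot--Villani, the bound reads
\[
\|Q^+(g,h)\|_{L^2_{\ell}}\le C\|g\|_{L^1_{\ell+\gamma}}\|h\|_{L^2_{\ell+\gamma}}\quad\text{and}\quad \br{Q^+(g,h),h\wei^{2\ell}}\le \varepsilon\|h\|^2_{L^2_{\ell+\gamma/2}}+C_\varepsilon\|g\|_{L^1_{k}}\|h\|_{L^2}^2 .
\]
The smallness comes from splitting the angular kernel into a part concentrated near grazing or frontal angles, which is small because $b\equiv1$ is integrable and the pieces can be made of small measure, and a regularizing remainder. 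That remainder gains regularity by Lions' theorem and gains decay in the weight via Bouchut--Desvillettes type estimates. The terms with $\beta\ne0$ place derivatives on $g$; they are handled by the first bound with $g=\pa^\beta f\in L^1_{\ell+\gamma}$, which in turn is controlled by $\|\pa^\beta f\|_{L^2_{\ell+2}}$, so one proceeds by induction on $\ell$ (or closes at a larger weight).

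Collecting terms and choosing $\varepsilon$ small, we get
\[
\tfrac{d}{dt}\|f\|^2_{H^1_\ell}+c\|f\|^2_{H^1_{\ell+\gamma/2}}\le C_\ell\bigl(\|f\|^2_{H^1}+\|f\|^2_{L^2_{\ell}}\bigr).
\]
For $\gamma>0$, interpolation between the low norm and the dissipation gives a uniform-in-time bound; for $\gamma=0$ we instead close with a Gronwall bound on each finite interval, and the uniform bound uses the exponential contraction known for Maxwellian molecules. All of this is done first for smooth compactly truncated approximations and then passed to the limit by uniqueness.
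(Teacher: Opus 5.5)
The paper does not prove this proposition; it invokes the uniform-in-time propagation of weighted Sobolev regularity of Mouhot--Villani (Theorem \ref{thm:Hs}). Reproving that result is a legitimate alternative, but your argument does not close where it matters.

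The gain-term bound you adopt,
\[
\langle Q^+(g,h),h\langle v\rangle^{2\ell}\rangle\le\varepsilon\|h\|^2_{L^2_{\ell+\gamma/2}}+C_\varepsilon\|g\|_{L^1_k}\|h\|^2_{L^2},
\]
leaves a remainder that is still \emph{quadratic} in $h$, at the same regularity. Its constant $C_\varepsilon$ is not small; it grows as $\varepsilon\to0$. With $h=\partial^\alpha f$ (and $h=f$ at order zero) this produces the term $C_\ell\|f\|^2_{H^1}$ in your final inequality. For $\gamma>0$, weight interpolation $\langle v\rangle^{2\ell}\le\delta\langle v\rangle^{2\ell+\gamma}+C_\delta$ absorbs $\|f\|^2_{L^2_\ell}$ up to $\|f\|^2_{L^2}$. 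Nothing, however, absorbs the unweighted $\|\nabla f\|^2_{L^2}$.

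Already at $\ell=0$ your inequality reads $\frac{d}{dt}X+cX\le 2C_0X$ with $X=\|f\|^2_{H^1}$, and there is no reason for $2C_0<c$. Gronwall therefore gives only $\|f(t)\|_{H^1_\ell}\le Ce^{Ct}$, which is $L^\infty_{\mathrm{loc}}$ in time. The proposition asserts $L^\infty([0,\infty);H^1_\ell)$, and the paper uses it as a $\sup_{t\ge0}$ in \eqref{eq:boundM1}. The same defect occurs at the $L^2$ level. For $\gamma=0$, ``Gronwall on finite intervals plus an exponential contraction'' is not an argument: contraction in a weak (e.g.\ Fourier-based) metric does not by itself give uniform $H^1_\ell$ bounds.

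What is missing is the \emph{quantitative} use of the regularizing part of the gain operator on the whole bilinear term. This makes the source terms genuinely lower order, so the loss term can provide exponential damping ($L\ge c>0$, which also holds for $\gamma=0$, where $L$ is a positive constant). With $Q^+=Q^+_S+Q^+_R$ as you describe:
\begin{enumerate}
\item[(a)] Use $\|Q^+_R(f,f)\|_{\dot H^1}\lesssim\|f\|_{L^1}\|f\|_{L^2}$ (a gain of $(d-1)/2=1$ derivative). This makes the $H^1$-level source linear in $\|\nabla f\|_{L^2}$.
\item[(b)] At the $L^2$ level, interpolate $Q^+_R(f,f)$ between $L^1$ and $H^1\subset L^6$. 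This gives a sub-quadratic term of order $\|f\|_{L^2}^{8/5}$.
\end{enumerate}
One then has $\frac{d}{dt}Y\le -c'Y+C\,Y^{\theta}$ with $\theta<1$, which yields a uniform bound.

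There is a second, related problem in your $\beta\neq0$ term. You bound $Q^+(\partial^\beta f,f)$ via $\|\partial^\beta f\|_{L^1_{\ell+\gamma}}\lesssim\|\partial^\beta f\|_{L^2_{\ell+2}}$, which costs more weight than the dissipation $\|\partial^\beta f\|_{L^2_{\ell+\gamma/2}}$ supplies. Induction upward in $\ell$ cannot supply a \emph{higher} weight, and closing at a larger weight only shifts the loss. The $L^1$ factor must carry only a low weight, with $\langle v\rangle^{2\ell}$ distributed onto the $L^2$ factors as in Theorem \ref{thm:Qfgh}. The piece $Q^+_R(\partial^\beta f,f)$ must again be handled by regularization, e.g.\ $\|Q^+_R(\partial f,f)\|_{L^2}\lesssim\|\partial f\|_{H^{-1}}\|f\|_{L^1}$.
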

\begin{rmk} The proposition follows directly from Theorem \ref{thm:Hs}, which establishes uniform-in-time bounds for the propagation of regularity in weighted Sobolev spaces.
\end{rmk}
 \begin{rmk} Indeed, our main theorem stated below requires only that the initial data satisfy a finite polynomial decay property, which can, in fact, be explicitly quantified depending solely on the initial entropy. Rather than tracking this explicit dependency, we choose to impose the rapid decay assumption \eqref{TAES} on $f_0$ for the sake of clarity and brevity.
\end{rmk}

We are now positioned to state our main theorem, which provides a mathematically justified error estimate between the numerical approximation generated by the numerical method and the exact continuous solution of the Boltzmann equation.
\begin{thm}
\label{thm:mainresult} Let $f$ be a solution as in Proposition \ref{thm:fHnl} with initial data $f_0$, and let $f_N$ be defined in \eqref{eq:fN} as the zero-extension of $f_N^R$, where $f_N^R$ satisfies \eqref{eq:num} with initial data $\cP_N(f_0\, \psi^R\,\mathbf{1}_{\mathcal{D}_L})$.  Here $L = \frac{3+\sqrt{2}}{2}R$ with $R>0$ is the velocity domain truncation, and  $N$ is the number of Fourier modes. Set $\mathsf{H}:=\|f_0\|_{L^1_2}+\|f_0\|_{L\log L}$.  Then there exists a sufficiently large constant $k=k(\mathsf{H})>0$, depending only on $\mathsf{H}$, such that the following results hold.
\begin{enumerate}
	\item [(i).] For any $T>0$ and some $l>0$, $0<\varepsilon<\f12$, if  
	$R\,\ge\, \RR(T)$  and 
	$N\,\geq\, \NN(R,T)$, where    
	\ben
	\begin{cases}
	\ds\RR(t) := \CC_1\,\exp\rr{\f{\kappa}{l}\,t},\\[0.8em]
	\ds\NN(R,t):=\CC_1\,R^{3/2+\varepsilon}\,\exp(\kappa\,t),
	\end{cases}  \mbox{with}\quad \CC_1=\CC_1(k,l,\varepsilon,f_0), \quad \kappa=\kappa(k,l,\varepsilon,f_0),
	\een
 then there exists a constant $\CC=\CC(k,l,\varepsilon,f_0)$  such that  the error bound holds for  all  $t\,\in\, [0,T]$:
	\ben
	\label{eq:errorEstimate}
	\|f_N(t)-f(t)\|_{L^2_k(\R^3)}\,\le\, \CC\,\rr{\f{R^{3/2+\varepsilon}}{N}+R^{-l}}\,\exp\rr{\kappa\,t}\le \CC.
	\een  
	\item [(ii).] Additionally,  suppose   $\gamma>0$ and $f_0$ exhibits exponential decay, satisfying that $f_0e^{c\wei^s}\in L^1$
    with $c>0$ and $0<s<\gamma$. For any $0<c'<\f25 c$, define
    \ben
    \begin{cases}
    \ds\RR(t) := \rr{\f{\kappa t+\log\CC_2}{c'}}^{\f1s}, \\[0.8em]
    \ds\NN(R,t):=\CC_2\,R^{3/2+\varepsilon}\,\exp(\kappa\,t),  
    \end{cases} \mbox{where}\quad \CC_2=\CC_2(k, \varepsilon, c, c', s,f_0),\quad \kappa=\kappa(k, \varepsilon, c, c', s,f_0).
    \een
    Then there exists a constant $\CC'=\CC'(k, \varepsilon, c, c', s,f_0)$   such that, if $R\,\ge\, \RR(T)$ and $N\,\geq\, \NN(R,T)$, the following error bound holds for all $t\,\in\, [0,T]$:
    \ben
    \label{eq:errorEstimate2}
    \|f_N(t)-f(t)\|_{L^2_k(\R^3)}\,\le\, \CC'\,\rr{\f{R^{3/2+\varepsilon}}{N}+\exp(-c'R^s)}\,\exp\rr{\kappa\,t}\le \CC'.
    \een  
\end{enumerate}
\end{thm}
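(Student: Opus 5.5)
The proof reduces everything to a single stability estimate on the torus for the error $e_N:=f^R_N-\tpn(f\psi^R)$. We do not compare $f_N$ with $f$ directly. We split
\[
\|f_N-f\|_{L^2_k(\R^3)}\le \|f(1-\psi^R)\|_{L^2_k}+\|f\psi^R-\tpn(f\psi^R)\|_{L^2_k(\cD_L)}+\|e_N\|_{L^2(m_k)},
\]
using that $m_k\sim\wei^k$ on $B(0,R)$ and $m_k\le \langle R+1\rangle^k$ elsewhere on $\cD_L$.

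The first term is a tail. Proposition \ref{thm:fHnl} gives uniform bounds in $H^1_{\ell}$, so it is $\lesssim R^{-l}$ in case (i). In case (ii), propagation of exponential moments $e^{c'\wei^s}$ for $\gamma>0$ (Mischler--Wennberg / Bobylev type arguments) gives $\exp(-c'R^s)$; the constraint $c'<\frac25 c$ is presumably the loss in this propagation combined with the $L^2$ conversion. The second term is the standard spectral error of the smoothed projection. Since $1-\Phi(k/N)$ vanishes for $|k|_\infty\le 0.9N$, it is bounded by $N^{-1}\|f\psi^R\|_{H^1_k}$, and the weight is accounted for by the $\langle R\rangle$-dependence of $m_k$.

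The core step is the evolution equation for $e_N$:
\[
\pa_t e_N=Q^R_N(f^R_N,f^R_N)-\tpn\big(Q(f,f)\psi^R\big)-\tpn\big(f\,\pa_t\psi^R\big),
\]
where the last term is zero since $\psi^R$ is time independent. We write $Q(f,f)\psi^R=Q_p(f\psi^R,f\psi^R)\psi^R+\mathcal{E}_{trunc}$. This uses the fact that for functions supported in $B(0,R)$ the periodic operator coincides with $Q$, and that on $B(0,R/2)$ the cut-off of interactions only costs tails. Hence $\|\mathcal{E}_{trunc}\|_{L^2_k}\lesssim R^{-l}$ after choosing $\ell\gg l+k+\gamma$. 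Next, using $\tpn=\tpn^{1/2}\tpn^{1/2}$, we write $Q^R_N(f^R_N,f^R_N)-\tpn^{1/2}(\tpn^{1/2}Q_p(F,F)\psi^R)$ with $F=\tpn(f\psi^R)$ plus consistency errors of size $N^{-1}R^{3/2+\varepsilon}$. The $R^{3/2}$ comes from passing between $L^\infty$ and $L^2$ on a torus of volume $\sim R^3$ (Sobolev embedding $H^{3/2+\varepsilon}\subset L^\infty$ rescaled to $\cD_L$). The remaining difference is bilinear:
\[
Q_p(G_N,G_N)-Q_p(F,F)=Q_p(E,G_N)+Q_p(F,E),\qquad E=\tpn(e_N\psi^R).
\]

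Next comes the energy estimate. We pair with $e_N m_k^2$ and move one $\tpn^{1/2}$ across by self-adjointness. Then only $\langle Q_p(\cdot,\cdot)\psi^R,\tpn^{1/2}(e_N m_k^2)\rangle$ appears, with all functions supported in $B(0,R)$ up to commutator errors. There we may replace $Q_p$ by $Q$ and use whole-space cutoff estimates:
\begin{itemize}
\item $\|Q^+(g,h)\|_{L^2_k}\lesssim \|g\|_{L^1_{k+\gamma}}\|h\|_{L^2_{k+\gamma}}+\dots$;
\item the loss term gives coercivity $-\langle Q^-(F,E),E\wei^{2k}\rangle\le -c\|E\|^2_{L^2_{k+\gamma/2}}$, with $c$ depending on $\mathsf H$ through the lower bound $\int f\,|v-v_*|^\gamma dv_*\gtrsim\wei^\gamma$;
\item for $k=k(\mathsf H)$ large, the Povzner-type gain for $Q^+(F,E)$ is dominated by the coercive loss.
\end{itemize}
The term $Q_p(E,G_N)$ needs a bound on $G_N$ in $L^2_{k+\gamma}$. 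We obtain it by a bootstrap: as long as $\|e_N\|_{L^2(m_k)}\le 1$, $G_N$ is controlled by $f$. Commutators $[\tpn^{1/2},m_k]$ are $O(N^{-1})$ by smoothness of $m_k$ and $\sqrt\Phi$. The result is
\[
\frac{d}{dt}\|e_N\|^2_{L^2(m_k)}\le 2\kappa\|e_N\|^2_{L^2(m_k)}+\CC\Big(\frac{R^{3/2+\varepsilon}}{N}+R^{-l}\Big)\|e_N\|_{L^2(m_k)}.
\]
Grönwall then gives \eqref{eq:errorEstimate}. The conditions $R\ge\RR(T)$ and $N\ge\NN(R,T)$ are exactly what keeps the right-hand side of \eqref{eq:errorEstimate} below $1$, which closes the bootstrap; the initial error is $O(N^{-1})$.

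I expect the main obstacle to be the coercivity step on the torus. $Q_p$ is not $Q$, and the functions $\tpn(\cdot\,\psi^R)$ are not compactly supported, because smoothed projection spreads support. So we must show that the portion of $G_N$ and $E$ outside $B(0,R)$ is small in an $L^\infty$-type sense without losing more than $R^{3/2+\varepsilon}$, and that the periodic weight $m_k$, which saturates at $\langle R+1\rangle^k$, still makes the gain-loss Povzner balance work near the boundary. I would handle this by keeping $\psi^R$ on the outside of every $Q_p$. The bilinear forms then only see $v\in B(0,R)$ and $v+z^\pm\in B(0,2R)\subset\cD_L$, and interactions reaching $2R<|v+z|$ are absorbed using the $\mathbf 1_{|z|\le2R}$ cutoff and the constant value of $m_k$ there.
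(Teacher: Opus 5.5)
Your architecture matches the paper's: a weighted $L^2(m_k)$ energy estimate on $\cD_L$, whole-space coercivity, commutator bounds, bootstrap and Gr\"onwall, and tails from moment propagation. The genuine gap is the estimate that carries all of the torus analysis: a weighted bound for $Q_p$ on general, non-localized periodic functions, which your plan never supplies.

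\textbf{The self-adjointness step fails.} You claim that moving one $\tpn^{1/2}$ across leaves only $\langle Q_p(\cdot,\cdot)\psi^R,\tpn^{1/2}(e_Nm_k^2)\rangle$. In fact you get $\langle \tpn^{1/2}(Q_p(G_N,G_N))\,\psi^R,\ \tpn^{1/2}(e_Nm_k^2)\rangle$. The inner $\tpn^{1/2}$ still sits between $Q_p$ and $\psi^R$, and the scheme never presents $Q_p$ with $\psi^R$ directly outside it.
\begin{itemize}
\item Commuting $\psi^R$ and $m_k$ through that $\tpn^{1/2}$ costs $\f{L}{N}\|Q_p(G_N,G_N)\,m_{k-1}\|_{L^2(\cD_L)}$; these are the paper's $\cJ_1,\cJ_2$.
\item This is a weighted bound for $Q_p$ on all of $\cD_L$, with inputs $G_N=\tpn(f_N^R\psi^R)$ that are not supported in $B(0,R)$. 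The same bound is needed for the error incurred when you replace such inputs by compactly supported ones.
\item Whole-space estimates do not give this directly. There is no pre/post-collisional change of variables on the torus. Also, for $|v|\le R$ and $|z|\le 2R$, the points $v+z^\pm$ and $v+z$ range over $B(0,3R)\not\subset\cD_L$, not $B(0,2R)$, so wrap-around really occurs.
\end{itemize}
The paper fills this with the Fourier structure of $Q_p$. From $\hat Q^+_p(g,h)(k)=(2L)^{-3}\sum_{l+m=k}\hat g(l)\hat h(m)\beta(l,m)$, Lemma \ref{thm:betalmest} and Hausdorff--Young give \eqref{eq:QpghL2}: $\|Q_p^+(g,h)\|_{L^2}\ls L^{1/2+\gamma+\varepsilon}\|g\|_{L^2}\|h\|_{L^{6/(5-2\varepsilon)}}$ for arbitrary periodic $g,h$. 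Splitting at $|v|=R$, where $m_k\ge R^k$ outside, yields Lemma \ref{thm:Qpmk} with weight loss $m_{k+1/2+\gamma+\varepsilon}$. That extra $R^{1/2+\varepsilon}$, times the commutator factor $L/N$ from Lemma \ref{thm:comm}, is exactly the $R^{3/2+\varepsilon}/N$ in the theorem. It is not an $L^\infty$--$L^2$ Sobolev loss; recovering $L^\infty$ from $L^2$ on $\mathbb{P}_N$ would cost $(N/L)^{3/2}$.

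\textbf{Coercivity and the bootstrap.} The obstacle you flag (coercivity when $E,G_N$ are not supported in $B(0,R)$) is not handled by smallness of tails.
\begin{itemize}
\item The paper swaps $\tpn(f_N^R\psi^R)$ for $\tpn(f_N^R)\psi^R$, which \emph{is} supported in $B(0,R)$. Then $Q_p=Q$ exactly, and Theorem \ref{thm:Qfgg} applies with the same function $\tpn(g)\psi^R$ as second argument and as test function, where $g=f_N^R-f\psi^R$.
\item The swap costs $\f{L}{N}\|f_N^Rm_{k-1}\|_{L^2}$ by \eqref{eq:fnrmk}. This error is fed through Lemma \ref{thm:Qpmk} (terms $\cI_{13},\cI_{14}$), which again needs the periodic bound above.
\item The coercive first argument must be nonnegative. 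That is why the paper compares with $f\psi^R$ rather than $\tpn(f\psi^R)$, puts $f(\psi^R)^2\ge0$ there, and treats $(\tpn-I)(f\psi^R)$ as an error. Your $F=\tpn(f\psi^R)$ is signed.
\item Your bootstrap claim, that $\|e_N\|_{L^2(m_k)}\le1$ controls $G_N$ in $L^2_{k+\gamma}$, is false, since the bootstrap only controls weight $k$. You need the symmetric trilinear bound of Theorem \ref{thm:Qfgh}(1), with $\gamma/2$ on each factor. Then the cross term is $C_0\|g\,m_k\|_{L^2}\|\tpn(g)\psi^R\|^2_{L^2_{k+\gamma/2}}$ and is absorbed by the dissipation $K\|\tpn(g)\psi^R\|^2_{L^2_{k+\gamma/2}}$. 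This is what forces the threshold $\min(K/(8C_0),1)$ rather than $1$.
\item An $L^2_k$ bound carrying a full $L^2_{k+\gamma}$ norm of the error, as in your first bullet, cannot be absorbed when $\gamma>0$ without an $R^\gamma$ loss in $\kappa$.
\end{itemize}
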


\begin{rmk}  
  \textnormal{The significance of Theorem \ref{thm:mainresult} lies in cleanly decoupling the computational error into two controllable parts: the \emph{spectral approximation error} $O(R^{3/2+\varepsilon}/N)$ and the \emph{velocity truncation error} $\mathcal{O}(R^{-l})$ or $O(\exp\rr{-c'R^s})$. The exponential factor $e^{\kappa t}$ reflects the error accumulation in this evolutionary equation. Consequently, for any prescribed precision and any finite simulation time $T>0$, one can configure sufficiently large $R$ and $N$ to obtain a bounded and consistent numerical accuracy uniformly over the target time interval $[0,T]$. This provides a rigorous answer to {\bf Question (I)}. Also, this result shows that for fixed number of Fourier modes $N$, there exists an optimal, intermediate range for $R$ where the balance between the spectral and truncation limits is achieved.}
 \end{rmk}

\begin{rmk} \textnormal{The primary distinction between $(i)$ and $(ii)$ lies in the behavior of the required truncation function $\RR(t)$ and the velocity truncation remainder. Under the additional assumption of exponential decay for the initial data in $(ii)$, the required growth of the truncation radius $R$ is significantly relaxed from exponential to polynomial in time. Concurrently, the velocity truncation error in the final estimate is improved from a polynomial rate to an exponential decay rate.}
 \end{rmk}

\begin{rmk} \textnormal{If we fully consider the asymptotic behavior of the exact solution $f$ to the spatially homogeneous Boltzmann equation, one may derive that for any given error $\epsilon \ll 1$, there exists a time $T_\epsilon$ such that for all $t \ge T_\epsilon$,
\[\|f(t)-\mu\|_{L^2}\le \epsilon.\]Consequently, we can improve the error estimates \eqref{eq:errorEstimate} and \eqref{eq:errorEstimate2} to global estimates by setting $T := T_\epsilon$ and taking $f_N(t) := \mu$ for all $t \ge T_\epsilon$.
Furthermore, regarding regularity, one may adapt a methodology similar to that in \cite{FGH} to show that, if the exact solution $f$ is sufficiently smooth, then equivalent higher-order regularity estimates can be obtained for the discrete spectral solution $f_N$.}
\end{rmk}

\subsubsection{Strategy} Our main strategy relies on the following two aspects.
 \smallskip

 \noindent$\bullet$ \textbf{Novel tools on the torus: $\tpn$ and $m_k$.}  
    To transfer analytical techniques from the whole space $\R^3$ to the periodic torus $\cD_L$, we introduce two mathematical tools: a smoothing projection operator $\tpn$ and a smoothly patched periodic weight function $m_k$. 
    The ingenuity of $m_k$ lies in the fact that it coincides with the standard polynomial weight $\wei^k$ within the domain $B(0,R)$, while remaining a smooth and strictly periodic function on $\cD_L$. This structure enables us to rigorously estimate the commutator between $\tpn$ and $m_k$ (see Lemma \ref{thm:comm}). Consequently, we can avoid heavy intermediate truncation equations and directly execute polynomial-weighted norm estimates on the torus, generalizing classical whole-space methods for the propagation of moments to the periodic setting.
\smallskip

 \noindent $\bullet$ \textbf{Intrinsic analytical bounds for the periodic collision operator $Q_p$.} 
    The standard weighted estimates for the Boltzmann collision operator heavily rely on the regular and singular pre/post-collisional changes of variables $(v, v_*) \to (v', v_*')$, which are invalid on a periodic torus. To bypass this incompatibility, we establish analytical bounds for the periodic collision operator $Q_p$ using its \emph{discrete Fourier convolution structure} (see Lemma \ref{thm:Qpghf}). This innovation allows us to estimate the collision operator within the periodic domain. Remarkably, for functions compactly supported in $B(0,R)$, these derived Fourier-based bounds perfectly match the classical polynomial-weighted estimates of the original collision operator $Q$ on $\R^3$.
 
\subsection{Organization of the paper}
The article is organized as follows. Section 2 collects some preliminary results for the Boltzmann collision operator in both the whole space and the periodic domain. The main result is proved in Section 3, and numerical simulations are presented in Section 4. Technical proofs and classical results are provided in the appendix.

\section{Preliminary results}

\subsection{Estimates for Boltzmann operators on $\R^3$} We establish the following estimates for Boltzmann operators with polynomial weights:

\begin{thm}\label{thm:Qfgh}Let $0\le \gamma\le 1$, $k\ge 8$, and let $f,g,h$ be functions. Then the following estimates hold:
\begin{itemize}
	\item [(1)] It holds that
\ben\label{eq:Qfgh}
\ba
\br{Q(f,g),h\wei^{2k}}\le \f{C_1}{\sqrt{k}}\|g\|_{L^1_\gamma}\|f\|_{L^2_{k+\gamma/2}}\|h\|_{L^2_{k+\gamma/2}}+C_2\|f\|_{L^1_{\gamma}}\|g\|_{L^2_{k+\gamma/2}}\|h\|_{L^2_{k+\gamma/2}}+ C_k\|f\|_{L^2_k}\|g\|_{L^2_k}\|h\|_{L^2_{k}},
\ea
\een
where $C_1$ and $C_2$ are absolute constants, and $C_k$ depends only on $k$.
	\item[(2)] It holds that
\ben\label{eq:QfgL2}
\ba
\|Q(f,g)\|_{L^2_k}\le \f{C_1}{\sqrt{k}}\|g\|_{L^1_\gamma}\|f\|_{L^2_{k+\gamma}}+C_2\|f\|_{L^1_{\gamma}}\|g\|_{L^2_{k+\gamma}}+C_k\|f\|_{L^2_k}\|g\|_{L^2_k},
\ea
\een
where $C_1$ and $C_2$ are absolute constants, and $C_k$ depends only on $k$.
\end{itemize}
	
\end{thm}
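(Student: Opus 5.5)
We work with $B=|v-v_*|^\gamma$, $b\equiv1$, and split $Q=Q^+-Q^-$, treating gain and loss terms separately.

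\textbf{Loss term.} The weight does not move under $Q^-$. Since $|v-v_*|^\gamma\le \wei^\gamma\weis^\gamma$, we get
\[
|\langle Q^-(f,g),h\wei^{2k}\rangle|\lesssim \int\!\!\int |f_*|\,|v-v_*|^\gamma |g||h|\wei^{2k}\,dv\,dv_*\lesssim \|f\|_{L^1_\gamma}\|g\|_{L^2_{k+\gamma/2}}\|h\|_{L^2_{k+\gamma/2}}.
\]
This goes into the $C_2$ term. (The roles in the stated bound are $f$ at $v_*$ and $g$ at $v$, matching $Q(g,f)$ in the introduction with the letters swapped.)

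\textbf{Gain term.} Write the gain term in weak form, $\int\!\!\int\!\!\int |v-v_*|^\gamma f_* g\, h(v')\langle v'\rangle^{2k}\,d\sigma\, dv_*\, dv$. The key step is a pointwise bound on the weight. With $\cos\theta$ defined as in the introduction, $|v'|^2=|v|^2\cos^2(\theta/2)+|v_*|^2\sin^2(\theta/2)+(\text{cross term})$. A Povzner-type expansion then gives
\[
\langle v'\rangle^{2k}\le \wei^{2k}\cos^{2k}(\theta/2)+\weis^{2k}\sin^{2k}(\theta/2)+C_k\bigl(\wei^{2k-2}\weis^{2}+\wei^{2}\weis^{2k-2}\bigr),
\]
after splitting $\langle v'\rangle^{2k}\le\langle v'\rangle^{k}\cdot\langle v'\rangle^k$ appropriately. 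Rather than putting the full weight on $h(v')$, I would distribute it as $\langle v'\rangle^{k}$ on $h(v')$ and use $\langle v'\rangle^{k}\lesssim \wei^k\cos^k(\theta/2)+\weis^k\sin^k(\theta/2)+C_k(\text{lower order})$. This leaves three pieces.

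\begin{itemize}
\item \textbf{Piece 1}, $\wei^k\cos^k(\theta/2)$ with $g$ at $v$. Apply Cauchy–Schwarz with the regular change of variables $v\mapsto v'$ at fixed $v_*,\sigma$. Its Jacobian is bounded for $\cos(\theta/2)$ bounded below; the region of small $\cos(\theta/2)$ can be symmetrized away by the usual $\sigma\to-\sigma$ trick if needed. This yields $\|f\|_{L^1_\gamma}\|g\|_{L^2_{k+\gamma/2}}\|h\|_{L^2_{k+\gamma/2}}$ with an absolute constant, which is $C_2$.
\item \textbf{Piece 2}, $\weis^k\sin^k(\theta/2)$ with $f$ carrying the weight. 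Use the singular change $v_*\mapsto v'$ at fixed $v$, whose Jacobian behaves like $\sin^{-3}(\theta/2)$ up to constants. This produces $\int_{\S^2}\sin^{k-3/2}(\theta/2)\,d\sigma\sim k^{-1/2}$ after Cauchy–Schwarz, hence $\frac{C_1}{\sqrt k}\|g\|_{L^1_\gamma}\|f\|_{L^2_{k+\gamma/2}}\|h\|_{L^2_{k+\gamma/2}}$. Here $k\ge8$ makes the angular integral convergent with room to spare.
\item \textbf{Piece 3}, the mixed lower-order terms. These carry at most $k-1$ powers on each side, with total weight about $k+\gamma$. Interpolation of the form $\|\cdot\|_{L^1_{\gamma+2}}\lesssim\|\cdot\|_{L^2_k}$, valid since $k\ge8>3/2+\gamma+2$, together with the bilinear $L^2$ bounds for $Q^+$, gives $C_k\|f\|_{L^2_k}\|g\|_{L^2_k}\|h\|_{L^2_k}$.
\end{itemize}

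\textbf{Part (2).} This follows by duality: take the supremum over $h$ with $\|h\|_{L^2}\le1$ in $\langle Q(f,g)\wei^k,h\rangle$ and redo the argument with the weight split $\wei^k\cdot 1$. Now all of $\gamma$ sits on $f$ or $g$, giving $L^2_{k+\gamma}$. The loss term is again immediate.

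\textbf{Main obstacle.} The main difficulty is the gain term. One must extract the sharp $k^{-1/2}$ from the angular integral against the singular-change Jacobian. At the same time the constant $C_2$ must be kept absolute, independent of $k$, which means the $\cos^{k}$ piece cannot be crudely bounded and the near-grazing region must be handled carefully.
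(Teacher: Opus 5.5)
Your architecture matches the paper's. You estimate the loss term directly. In the gain term, written in weak form, you keep one factor $\langle v'\rangle^k$ on $h(v')$ and split the other into a $\wei^k\cos^k(\theta/2)$ piece (regular change), a $\weis^k\sin^k(\theta/2)$ piece (singular change) and mixed terms. Part (2) goes by duality. The problem is the pointwise Povzner inequality you display for $\langle v'\rangle^{2k}$: it is false, and the term it omits is exactly the one that needs care.

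By Lemma~\ref{thm:E}, $\langle v'\rangle^2=E(\theta)+\sin\theta\,|v-v_*|\,(v\cdot\omega)$ with $E(\theta)=\wei^2\cos^2(\theta/2)+\weis^2\sin^2(\theta/2)$. The cross term can be as large as $\sin(\theta/2)\cos(\theta/2)|v|\,|v_*|$, so it contributes terms of order $\wei^{2k-1}\weis$ to $\langle v'\rangle^{2k}$. Such terms cancel in the symmetric sum $\langle v'\rangle^{2k}+\langle v'_*\rangle^{2k}$ or after averaging in $\omega$, which is what classical Povzner inequalities exploit. Neither mechanism is available with $h(v')$ in the integrand. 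For instance, take $v_*=e_1$, $v=Re_2$, $\sigma=(Re_1+e_2)/\sqrt{R^2+1}$. Then $\theta=\pi/2$, $v'=\frac{R+1}{2}(e_1+e_2)$, and
\[
\langle v'\rangle^{2k}-\wei^{2k}\cos^{2k}(\theta/2)=2^{-k}\bigl[(R^2+2R+3)^k-(R^2+1)^k\bigr]\ge 2^{1-k}\,k\,R^{2k-1},
\]
while the remaining terms on your right-hand side are $O_k(R^{2k-2})$.

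What is true follows from setting $x=\wei\cos(\theta/2)$, $y=\weis\sin(\theta/2)$ and using the bound $|\sin\theta\,|v-v_*|\,(v\cdot\omega)|\le 4xy$ from \eqref{eq:vwEt}:
\[
\langle v'\rangle^{k}\le (x^2+4xy+y^2)^{k/2}\le x^k+y^k+C_k\bigl(x^{k-1}y+x\,y^{k-1}\bigr).
\]
To see this, treat $y\le x$ and $x\le y$ separately and use $(1+u)^{k/2}\le 1+C_ku$ for $0\le u\le 5$.

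The leading coefficients are $1$, so Pieces 1 and 2 keep absolute constants. The term $x^{k-1}y$ gives, via \eqref{eq:Kfgh2}, $\|f\|_{L^1_{1+\gamma}}\|g\|_{L^2_{k-1+\gamma}}\|h\|_{L^2_k}$. This is bounded by $C_k\|f\|_{L^2_k}\|g\|_{L^2_k}\|h\|_{L^2_k}$ only because $\gamma\le 1$. It is the paper's $I_{21}$, and it is where the restriction $\gamma\le1$ genuinely enters.

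The term $x\,y^{k-1}$ corresponds to the last term of $I_{22}$. It must be handled with $g$ in $L^1_{1+\gamma}$ and $f$ in $L^2_{k-1+\gamma}$, through the singular change \eqref{eq:Kfgh1}, since $\|f\|_{L^1_{k-1+\gamma}}$ is not controlled by $\|f\|_{L^2_k}$. Your Piece 3 should state which factor goes to $L^1$ in each case.

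With this inequality your route becomes a shorter version of the paper's proof. The binomial expansion of $E(\theta)^{k/2}$, the Gamma-function bound \eqref{eq:GmGm}, and the Taylor remainders $I_{22}$, $I_{23}$ all collapse into one elementary estimate. That suffices because $C_k$ may depend on $k$ arbitrarily.

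Two smaller points. First, the $\sigma\to-\sigma$ symmetrization is not a symmetry of $\int f_*\,g\,h(v')$ when $f\neq g$: it exchanges $v'$ and $v'_*$, i.e.\ the roles of $f$ and $g$. You do not need it, though. In Piece 1 the Cauchy--Schwarz and regular-change argument produces $\int_{\S^2}\cos^{k-3/2-\gamma/2}(\theta/2)\,d\sigma$, which is bounded uniformly in $k\ge 8$, so there is no delicate region there. Second, the angular integral in Piece 2 is in fact $O(k^{-1})$, because $d\sigma$ carries a factor $\sin\theta$. So the factor $k^{-1/2}$ is not the main obstacle; the cross term is.
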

\begin{thm}\label{thm:Qfgg}
Let $f$ be a nonnegative function satisfying $\|f\|_{L^1}>\delta$ and $\|f\|_{L^1_2}+\|f\|_{L\log L}<\mathsf{H}$, and let $g$ be any function. Then for $k\ge 8$,
\ben\label{eq:Qfgg}
\ba
&\br{Q(f, g), g\wei^{2 k}}+K\|g\|_{L^2_{k+\gamma/2}}^2\le \f{C_1}{\sqrt{k}}\|g\|_{L^1_{\gamma}}\|f\|_{L^2_{k+\gamma/2}}\|g\|_{L^2_{k+\gamma/2}}+C_k\|f\|_{L^2_k}\|g\|_{L^2_k}^2,
\ea
\een
where $K$ depends only on $\delta$ and $\mathsf{H}$, $C_1$ is an absolute constant, and $C_k$ depends only on $k$.
\end{thm}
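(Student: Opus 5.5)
Since $Q(f,g)=Q^+(f,g)-Q^-(f,g)$ and $b\equiv1$ is cutoff, I would split the pairing into the gain and the loss part:
\[
\br{Q(f,g),g\wei^{2k}}=\br{Q^+(f,g),g\wei^{2k}}-\br{Q^-(f,g),g\wei^{2k}}.
\]

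\emph{Loss term.} Here $Q^-(f,g)=|\S^2|\,(f*|\cdot|^\gamma)\,g$. The standard lower bound for the convolution is
\[
(f*|\cdot|^\gamma)(v)\ge c_0\wei^\gamma ,
\]
with $c_0=c_0(\delta,\mathsf{H})>0$. It follows because $f$ cannot concentrate. The $L\log L$ bound rules out concentration on sets of small measure, and the second moment confines most of the mass $\|f\|_{L^1}>\delta$ to a ball. So for any $v$, a fixed fraction of the mass of $f$ sits at distance $\gtrsim\wei$ from $v$ when $|v|$ is large, and at distance bounded below uniformly when $|v|$ is bounded. This gives
\[
-\br{Q^-(f,g),g\wei^{2k}}\le -2K\|g\|^2_{L^2_{k+\gamma/2}}
\]
with $2K=|\S^2|c_0$, which produces the coercive term. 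One copy of $K\|g\|^2_{L^2_{k+\gamma/2}}$ moves to the left-hand side; the other copy is spare and will absorb errors from the gain term.

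\emph{Gain term.} I would not start from Theorem \ref{thm:Qfgh}(1) as a black box, because it contains the term $C_2\|f\|_{L^1_\gamma}\|g\|^2_{L^2_{k+\gamma/2}}$, whose absolute constant need not be smaller than $K$. That term comes from the loss part, plus the piece of the gain part where $v'$ carries the weight. So I would redo the gain estimate in pre–post collisional variables:
\[
\br{Q^+(f,g),g\wei^{2k}}=\int\!\!\int\!\!\int |v-v_*|^\gamma f_*\,g\,(g\wei^{2k})(v')\,d\sigma\,dv_*\,dv .
\]
I would split $\wei[v']^{2k}$ by comparing it with $\wei^{2k}$ and $\weis^{2k}$ via the Povzner-type decomposition, using $\wei[v']^2=\wei^2\cos^2(\theta/2)+\weis^2\sin^2(\theta/2)$, which holds on the cutoff sphere.
\begin{itemize}
\item The part where the weight sits on $v_*$ (carried by $f$) is of size $\sin^{2k}(\theta/2)$. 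Integrating in $\sigma$ gives the factor $1/\sqrt{k}$, and Cauchy–Schwarz then yields $\f{C_1}{\sqrt k}\|g\|_{L^1_\gamma}\|f\|_{L^2_{k+\gamma/2}}\|g\|_{L^2_{k+\gamma/2}}$.
\item The part where the weight sits on $v$ (carried by $g$) has leading piece $\cos^{2k}(\theta/2)\wei^{2k}$. For cutoff $b\equiv1$ this piece is not small in the angular sense, so I would not rely on smallness in angle. Instead I would use Cauchy–Schwarz together with the regular change of variables $v\to v'$, whose Jacobian is bounded on the region $\cos\theta\ge0$ (handled after the standard symmetrization). This bounds it by $\eta\|g\|^2_{L^2_{k+\gamma/2}}\|f\|_{L^1_\gamma}$, and the remainder is lower order.
\item The cross terms in the binomial expansion involve only intermediate weights. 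They are bounded by $C_k\|f\|_{L^2_k}\|g\|^2_{L^2_k}$, using $|v-v_*|^\gamma\le\wei^\gamma\weis^\gamma$ and interpolation.
\end{itemize}

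\emph{Main obstacle.} The difficulty is the second bullet: showing that the gain contribution in which $g$ carries the full weight is strictly dominated by the loss coercivity $2K$. My first attempt would be to shrink it by an interpolation argument. I would separate large and small velocities with a radius $\Lambda$, where the large-velocity part is where $g$ has the high weight. On the high-velocity set, the gain piece gets the extra smallness (in the cutoff case there is no angular singularity, so the weight gain $\cos^{2k}(\theta/2)\to$ concentration on small $\theta$ gives a factor $\sim k^{-1/2}$ only up to the Jacobian). The rest I would absorb into $C_k\|f\|_{L^2_k}\|g\|^2_{L^2_k}$, since on the bounded region the $L^2_{k+\gamma/2}$ and $L^2_k$ norms are comparable. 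With $\Lambda$ and $k\ge8$ chosen so the leftover constant is below $K$, it is absorbed, and \eqref{eq:Qfgg} follows.
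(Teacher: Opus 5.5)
\paragraph{Where the argument breaks.} The gap is at the step you call the main obstacle, and the route you sketch there cannot give the statement as written. In your second bullet you bound the leading gain piece by $\eta\,\|f\|_{L^1_\gamma}\|g\|^2_{L^2_{k+\gamma/2}}$ and then try to absorb it into the loss coercivity. The loss produces $2K\|g\|^2_{L^2_{k+\gamma/2}}$ through the \emph{lower} bound $\int|v-v_*|^\gamma f_*\,dv_*\ge c_0(\delta,\mathsf{H})\wei^\gamma$. The quantity $\|f\|_{L^1_\gamma}$, by contrast, comes from the \emph{upper} bound. For $\gamma>0$ the ratio $\|f\|_{L^1_\gamma}/c_0$ is not bounded by an absolute constant. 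Take $f$ concentrated near $v=0$, which a large $\mathsf{H}$ permits: then $c_0\le\int|v_*|^\gamma f_*\,dv_*$ is small while $\|f\|_{L^1_\gamma}>\delta$.

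Neither term on the right of \eqref{eq:Qfgg} controls $\|g\|^2_{L^2_{k+\gamma/2}}$. So absorption with $\eta\sim k^{-1/2}$ forces $k\ge k_0(\delta,\mathsf{H})$ rather than $k\ge 8$. You have reintroduced exactly the defect you correctly diagnosed in the $C_2$-term of Theorem \ref{thm:Qfgh}(1). The weaker version would suffice for Section \ref{sec:3}, where $k$ is taken large depending on $\mathsf{H}$, but it is not the stated result.

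The $\Lambda$-splitting does not repair this. No mechanism producing extra smallness at large $|v|$ is given. Moreover, the bounded-velocity part would carry a factor $\langle\Lambda\rangle^{\gamma}$ with $\Lambda=\Lambda(\delta,\mathsf{H})$, so $C_k$ would no longer depend only on $k$. The premise of the bullet is also wrong: the piece \emph{is} small in angle, since $\int_{\S^2}\cos^{2k-3-\gamma}(\theta/2)\,d\sigma=\frac{8\pi}{2k-1-\gamma}$.

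\paragraph{The missing idea.} Compare the leading gain piece with the loss term \emph{before} passing to norms, so that the $f$-dependent factor is identical on both sides. Write $\langle v'\rangle^{2k}=\langle v'\rangle^{k}\langle v'\rangle^{k}$ and expand only one factor through $E(\theta)^{k/2}$. Then the leading piece is $|g|\wei^k\cos^k(\theta/2)\cdot|g'|\langle v'\rangle^k$, and $g'$ keeps its own weight at $v'$. If you expand all of $\langle v'\rangle^{2k}$ as you propose, $g'$ is left weighted by a power of $\wei$, and the change of variables $v\to v'$ becomes unusable.

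Now apply Young's inequality and the regular change of variables of Lemma \ref{thm:ch}. The change of variables is valid on the whole sphere because $\cos^{2k}(\theta/2)$ absorbs the Jacobian factor $\cos^{-3-\gamma}(\theta/2)$, so no symmetrization is needed. This gives
\begin{multline*}
\int_{\R^6\times\S^2}|v-v_*|^\gamma f_*\Bigl(|g|\,|g'|\,\langle v'\rangle^{k}\wei^{k}\cos^{k}\tfrac{\theta}{2}-|g|^2\wei^{2k}\Bigr)\,d\sigma\,dv\,dv_*\\
\le-\frac12\int_{\R^6\times\S^2}|v-v_*|^\gamma f_*\,|g|^2\wei^{2k}\Bigl(1-\cos^{2k-3-\gamma}\tfrac{\theta}{2}\Bigr)\,d\sigma\,dv\,dv_*.
\end{multline*}
The angular factor here is $4\pi-\frac{8\pi}{2k-1-\gamma}$, which is bounded below by an absolute constant for $k\ge 8$. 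Only at this point does the lower bound on $\int|v-v_*|^\gamma f_*\,dv_*$ yield $-K\|g\|^2_{L^2_{k+\gamma/2}}$ with $K=K(\delta,\mathsf{H})$ for every $k\ge8$. This is the paper's estimate of $I_{11}$.

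\paragraph{A secondary error.} The relation $\langle v'\rangle^2=E(\theta)$ is not a pointwise identity. The extra term $\sin\theta\,|v-v_*|\,(v\cdot\omega)$ averages out in $\omega$, but not after raising to the power $k/2$. So $\langle v'\rangle^k-E(\theta)^{k/2}$ must be estimated separately. The paper does this in $I_2$, via Taylor expansion and $|v-v_*|\,|v\cdot\omega|\le 2|v||v_*|$, and it contributes to $C_k\|f\|_{L^2_k}\|g\|^2_{L^2_k}$.
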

The proof is provided in Appendix \ref{sec:Boltzmann}.

\subsection{Preliminary results on the torus $\cD_L$.}
Recall that the torus domain $\cD_L$ is defined by $\cD_L=[-L,L]^3$ with $L=\f{3+\sqrt{2}}{2}R$. In this subsection, we establish several analytical tools that will be essential for the stability analysis of our numerical scheme. We begin with basic facts from Fourier analysis on the torus, then introduce polynomial weight functions adapted to the periodic setting, and finally derive estimates for the smoothing projection operator and its commutator with weight functions. These results form the foundation for controlling the error between the numerical solution and the exact solution in the subsequent stability analysis.

\subsubsection{Fourier analysis} For completeness, we present some basic facts on Fourier series.

\noindent$\bullet$ For any periodic function $f\in L^2(\cD_L)$,
\ben\label{eq:Ffk2}
f=\f{1}{(2L)^3}\sum_{k\in\Z^3}\hat{f}(k)e^{i\f{\pi}{L}k\cdot v},
\een
where 
\ben\label{eq:Ffk}
 \hat{f}(k):=\int_{\cD_L}f(v)e^{-i\f{\pi}{L}k\cdot v}d v.
\een

\noindent$\bullet$   
For functions $f,g\in L^2(\cD_L)$, 
\ben\label{eq:fgfl2}
\br{f,g}_p:=\int_{\cD_L}f(v)g(v)\, d v=(2L)^{-3}\sum_{k\in\Z^3}\hat{f}(k)\overline{\hat{g}(k)}, \ \ \|f\|^2_{L^2}=\br{f,f}_p=(2L)^{-3}\sum_{k\in\Z^3}|\hat{f}(k)|^2.
\een

\noindent$\bullet$ Hausdorff-Young inequality: let $1\le p\le 2$ and $\f{1}{p}+\f{1}{q}=1$. Then for $f\in L^1(\cD_L)$,
\ben\label{eq:HY}
\|\hat{f}\|_{\ell^q}=\left( \sum_{k\in\Z^3}|\hat{f}(k)|^q\right)^{1/q}\le (2L)^{3/q}\|f\|_{L^p(\cD_L)}.
\een
\noindent$\bullet$ 
The standard Sobolev norms are defined as follows: for $a\ge 0$,
\ben\label{eq:Hda}
\|f\|_{H^a_{per}}^2:=(2L)^{-3}\sum_{k\in\Z^3}|\hat{f}(k)|^2\left( 1+\f{\pi^2|k|^2}{L^2} \right)^{a},\ \ 
\|f\|_{\dot{H}^a_{per}}^2:=(2L)^{-3}\sum_{k\in\Z^3}|\hat{f}(k)|^2\left(\f{\pi|k|}{L} \right)^{2a}.
\een
 
 \noindent$\bullet$ For the projection operator $\cP_N$ (see \eqref{defcPN}) and $a>s$,
\ben
\|(I-\cP_N)f\|_{\dot{H}_{per}^s}\ls \left( \f{L}{N} \right)^{a-s}\|f\|_{\dot{H}_{per}^a},\\
\|\cP_Nf\|_{\dot{H}^{a}_{per}}\ls \left( \f{N}{L} \right)^{a-s}\|\cP_Nf\|_{\dot{H}_{per}^s}.
\een

\subsubsection{Polynomial weights on torus}
For any $k\ge 0$, we recall the positive weight function $m_k\in C^\infty(\cD_L)$ defined by \eqref{eq:mk}. The function $m_k$ possesses the following properties:
\begin{itemize}
	\item $m_k$ is a smooth, positive, periodic function on $\cD_L$.
	\item For any $v\in\cD_L$, $\wei^k\sim m_k(v)\ls R^k$. For any $v\in \cD_L\backslash B(0,R)$, $m_k(v)\ge R^k$.
	\item For any multi-index $\alpha\neq 0$ and $k\ge |\alpha|$,
\beq
\wei^{k-|\alpha|}\,\mathbf{1}_{|v|\le R}\ls|\partial^\alpha m_k(v)|\ls \wei^{k-|\alpha|}\,\mathbf{1}_{|v|\le 2R}\ls m_{k-|\alpha|}(v).
\eeq
	\item For any $k_1,k_2\ge 0$, $m_{k_1}\,m_{k_2}=m_{k_1+k_2}$.
	\item For any function $g$ compactly supported in $B(0,R)$, $g(v)\,\wei^k=g(v)\,m_k(v)$ for $v\in\cD_L$.
\end{itemize}

\subsubsection{Smoothing projection operator}
We recall the smoothing projection operator $\tilde{\cP}_{N}$ defined by \eqref{deftpn} as follows: for any function $f\in L^2(\cD_L)$,
\ben
\tpn f:=\f{1}{(2L)^3}\sum_{k\in\Z^3}\hat{f}(k)\,\Phi\left(\f{k}{N}\right)\,e^{i\f{\pi}{L}k\cdot v}.
\een
Note that $\tpn$ is not a projection operator, but it possesses the following properties analogous to those of $\cP_N$:
\begin{itemize}
	\item $\tpn f\in\mathbb{P}_N$, hence $\tpn \circ \cP_N=\cP_N\circ\tpn=\tpn$, and $\|\tpn f\|_{L^2}\le \|f\|_{L^2}$.
	\item For $a>s$,
\ben\label{eq:1-P}
\|(I-\tpn)f\|_{\dot{H}_{per}^s}\ls \left( \f{L}{N} \right)^{a-s}\|f\|_{\dot{H}_{per}^a},\\
\|\tpn f\|_{\dot{H}^{a}_{per}}\ls \left( \f{N}{L} \right)^{a-s}\|\tpn f\|_{\dot{H}_{per}^s}.
\een
\end{itemize}
\subsubsection{Estimates for the commutator}We establish the following estimate for the commutator between a weight function and the smoothing projection operator $\tpn$:
\begin{lem}\label{thm:comm}
Let $f\in L^2(\cD_L)$ and let $m\in C^1(\cD_L)$ be a periodic function. When $N>L$, the following estimates for $\tpn (fm)-(\tpn f)m$ hold:
\begin{itemize}
	\item[(1)] It follows that
\ben
\|\tpn (fm)-(\tpn f)\,m\|_{L^2}\ls \f{L}{N}\|f\|_{L^2}\|\nabla m\|_{L^\infty},
\een
where the constant depends only on the bump function $\Phi$.
\item[(2)] When $m=m_k$ is the weight function defined by \eqref{eq:mk} with integer $k\ge 1$,
\ben
\|\tpn (fm_k)-(\tpn f)\,m_k\|_{L^2}\ls \f{L}{N}\|fm_{k-1}\|_{L^2},
\een
where the constant depends only on $k$ and the bump function $\Phi$.
\end{itemize}
\end{lem}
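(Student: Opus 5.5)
The plan is to realize $\tpn$ as a convolution on the torus and write the commutator as an integral of differences of $m$. Define the periodic kernel
\[
K_N(v):=\f{1}{(2L)^3}\sum_{k\in\Z^3}\Phi\rr{\f{k}{N}}e^{i\f{\pi}{L}k\cdot v},
\]
so that $\tpn f(v)=\int_{\cD_L}K_N(v-w)f(w)\,dw$. Then
\[
\tpn(fm)(v)-(\tpn f)(v)\,m(v)=\int_{\cD_L}K_N(v-w)\,\bigl(m(w)-m(v)\bigr)f(w)\,dw .
\]
Here $v-w$ is read modulo the period, with representative in $\cD_L$. By periodicity and the mean value theorem along the shortest segment, $|m(w)-m(v)|\le |v-w|_{\cD_L}\,\|\nabla m\|_{L^\infty}$. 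Young's inequality on the torus therefore gives
\[
\|\tpn(fm)-(\tpn f)m\|_{L^2}\le \|\nabla m\|_{L^\infty}\,\|f\|_{L^2}\int_{\cD_L}|u|\,|K_N(u)|\,du .
\]
Part (1) then reduces to the kernel moment bound $\int_{\cD_L}|u||K_N(u)|\,du\ls L/N$.

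This kernel estimate is the main technical step, and I would prove it by Poisson summation. Since $\Phi\in C_c^\infty$, we have
\[
K_N(u)=\sum_{j\in\Z^3}\rr{\f{N}{2L}}^3\check\Phi\rr{\f{N}{2L}(u+2Lj)}\cdot c,
\]
where $\check\Phi$ is a Schwartz function and $c$ is a harmless normalization. The $j=0$ term contributes $\int|u|\,(N/L)^3|\check\Phi(Nu/(2L))|\,du\sim (L/N)\int|y||\check\Phi(y)|\,dy$. For $j\neq0$ and $u\in\cD_L$, we have $|u+2Lj|\gtrsim L|j|$. Schwartz decay together with $N/L>1$ then makes the sum of these terms $O((L/N)^{M}L^{-?})$, which is negligible, and in particular $\ls L/N$. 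The only care needed is making the constants depend only on $\Phi$.

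For part (2), I would sharpen the pointwise bound on $m_k(w)-m_k(v)$. By the listed properties, $|\nabla m_k(z)|\ls m_{k-1}(z)$, and $m_{k-1}$ is comparable to $\br{z}^{k-1}$, truncated at $R$. Along the segment from $w$ to $v$ one has $\br{z}\ls \br{w}+|v-w|$, which gives
\[
|m_k(w)-m_k(v)|\ls |v-w|\,\bigl(m_{k-1}(w)+|v-w|^{k-1}\bigr).
\]
One should check that the truncation of $m$ at level $R$ keeps this valid; it does, because $m_{k-1}\ls R^{k-1}$ and $|v-w|\ls L\sim R$, so the worst case is still controlled by $\br{w}^{k-1}+|v-w|^{k-1}$. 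Inserting this into the convolution formula produces two terms.

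The first term is bounded by $\|fm_{k-1}\|_{L^2}\int|u||K_N|\ls (L/N)\|fm_{k-1}\|_{L^2}$. The second is $\|f\|_{L^2}\int|u|^k|K_N(u)|\,du\ls (L/N)^k\|f\|_{L^2}$, by the same Poisson-summation argument with higher moments. Since $N>L$ we have $(L/N)^k\le L/N$, and $\|f\|_{L^2}\ls\|fm_{k-1}\|_{L^2}$ because $m_{k-1}\gtrsim1$. This yields (2), with constants depending only on $k$ and $\Phi$.
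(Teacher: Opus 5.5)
Your proposal is correct and follows the paper's proof: you represent $\tpn$ as a convolution, use Poisson summation to get $\int_{\cD_L}|u|^s|K_N(u)|\,du\ls (L/N)^s$ for $s=1$ and $s=k$, apply Young's inequality, and use a pointwise bound $|m_k(w)-m_k(v)|\ls |v-w|\,m_{k-1}+|v-w|^k$. The differences are only technical. The paper gets the kernel moments in one step from $|u|\ls|u+2Ln|$ for $u\in\cD_L$, which unfolds the periodized sum into $\int_{\R^3}|y|^s|\phi_{2L/N}(y)|\,dy$ and avoids your $j\neq0$ tail estimate (whose placeholder bound should read $O(L^sN^{3-M})$, which is indeed $\le (L/N)^s$ for $N\ge1$). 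It also derives the weight bound from a $k$-th order Taylor expansion using $|\partial^\alpha m_k|\ls m_{k-|\alpha|}$, whereas your first-order mean value theorem with $\langle z\rangle\le\langle w\rangle+|v-w|$ needs only the gradient bound.
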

\begin{proof}\underline{Step 1. Kernel representation of $\tpn$.}
Consider the Fourier kernel of the operator $\tpn$. For $x\in\cD_L$, by substituting the definition of the Fourier coefficients $\hat{f}(k)$, we obtain
\beq
\ba
\tpn f (x)&=\f{1}{(2L)^3}\sum_{n\in\Z^3}\hat{f}(n)\,\Phi\left(\f{n}{N}\right)\,e^{i\f{\pi}{L}n\cdot x}=\f{1}{(2L)^3}\sum_{n\in\Z^3}\int_{\cD_L}f(v)\,e^{-i\f{\pi}{L}n\cdot v}\,\Phi\left(\f{n}{N}\right)\,e^{i\f{\pi}{L}n\cdot x}\,d v\\
&=\f{1}{(2L)^3}\int_{\cD_L}f(v)\sum_{n\in\Z^3}\Phi\left(\f{n}{N}\right)\,e^{i\f{\pi}{L}n\cdot (x-v)}\,d v=\int_{\cD_L}f(v)\,D_N(x-v)\,d v,
\ea
\eeq
where we define the Fourier kernel of $\tpn$ by 
\ben
D_N(v):=\f{1}{(2L)^3}\sum_{k\in\Z^3}\Phi\left(\f{k}{N}\right)\,e^{i\f{\pi}{L}k\cdot v}.
\een
Thus $\tpn f=f*D_N$, i.e., the smoothing projection operator can be represented as a convolution with the kernel $D_N$.  (Remark: For the projection operator $\cP_N$, where $\Phi=\mathbf{1}_{[-1,1]^3}$, this Fourier kernel becomes the Dirichlet kernel $D_N(v)=\f{1}{(2L)^3}\sum_{|k|\le N}e^{i\f{\pi}{L}k\cdot v}$.)

\smallskip

\underline{Step 2. Estimates for the kernel $D_N$.}
We first apply the Poisson summation formula to rewrite $D_N(v)$. Define the inverse Fourier transform of $\Phi$ on $\R^3$ by
\beq
\phi(x)=\Phi^{\vee}(x)=\int_{\R^3}\Phi(\xi)\,e^{2\pi i\xi\cdot x}\,d\xi.
\eeq
Since $\Phi$ is a Schwartz function, $\phi$ is also a Schwartz function. By the Poisson summation formula,
\beq
D_N(v)=\f{1}{(2L)^3}\sum_{n\in\Z^3}\Phi\left(\f{2L}{N}\cdot\f{n}{2L}\right)\,e^{2\pi in\cdot \f{v}{2L}}=\sum_{n\in\Z^3}\left( \Phi\left( \f{2L}{N}\ \cdot \right) \right)^{\vee}(v+2Ln)=\sum_{n\in\Z^3}\phi_{2L/N}\left( (v+2Ln) \right),
\eeq
where we have used the scaling property of the Fourier transform. Here we denote $\phi_{t}(x)=t^{-3}\phi(t^{-1}x)=(\Phi(t\ \cdot\,))^{\vee} (x)$ for any $t>0$. Consequently,
\beq
\int_{\cD_L}D_N(v)\,dv=\int_{\R^3}\phi_{2L/N}(v)\,dv=\int_{\R^3}\phi(v)\,dv=\Phi(0)=1,
\eeq
and 
\beq
\|D_N(v)\|_{L^1(\cD_L)}=\int_{\cD_L}|D_N(v)|\,dv=\int_{\R^3}|\phi_{2L/N}(v)|\,dv=\int_{\R^3}|\phi(v)|\,dv=\|\phi\|_{L^1(\R^3)}.
\eeq
For any $s>0$, consider the function $|v|^sD_N(v)$ defined on the torus. For $v\in \cD_L$ and $n\in\Z^3$, when $n\neq 0$ we have $|v+2Ln|\ge L\ge |v|/\sqrt{3}$ (here we regard $v+2Ln\in\R^3$, so $v+2Ln$ lies outside the cube $\cD_L$ except when $n=0$). Thus
\beq
\ba
&\int_{\cD_L}|v|^s\,|D_N(v)|\,dv=\sum_{n\in\Z^3}\int_{\cD_L}|v|^s\,|\phi_{2L/N}(v+2Ln)|\,dv\ls \sum_{n\in\Z^3}\int_{\cD_L}|v+2Ln|^s\,|\phi_{2L/N}(v+2Ln)|\,dv\\
&=\int _{\R^3}|v|^s\,|\phi_{2L/N}(v)|\,dv=\left(\f{2L}{N}\right)^s\int_{\R^3}|v|^s\,|\phi(v)|\,dv=\left(\f{2L}{N}\right)^s\|\,|\cdot|^s\,\phi(\cdot)\,\|_{L^1(\R^3)}.
\ea
\eeq
Since $\phi$ is a Schwartz function, $\|\,|\cdot|^s\,\phi(\cdot)\,\|_{L^1(\R^3)}<\infty$ is a finite constant depending only on $s$ and $\Phi$. Therefore,
\ben\label{eq:DNes}
\|\,|\cdot|^s D_N(\cdot)\,\|_{L^1(\cD_L)}\ls \left(\f{L}{N}\right)^s.
\een

\smallskip

\underline{Step 3. Estimates for the commutator.}
For any $x\in \cD_L$,
\beq
(\tpn (fm)-(\tpn f)\,m)(x)=\int_{\cD_L}f(v)\,(m(v)-m(x))\,D_N(x-v)\,d v.
\eeq

\underline{\it{Point (1).}} For any $m\in C^1(\cD_L)$, the mean value theorem implies $|m(v)-m(x)|\le \|\nabla m\|_{L^\infty}|v-x|$ (this also holds for $x,v$ on the torus when $m$ is a periodic function with continuous periodic derivative). Hence,
\beq
|(\tpn (fm)-(\tpn f)\,m)(x)|\le \|\nabla m\|_{L^\infty}\int_{\cD_L}|f(v)|\,|(x-v)\,D_N(x-v)|\,d v.
\eeq
By Young's convolution inequality and \eqref{eq:DNes},
\beq
\|\tpn (fm)-(\tpn f)\,m\|_{L^2}\le \|\nabla m\|_{L^\infty}\|f\|_{L^2}\|\,|\cdot|\,D_N(\cdot)\,\|_{L^1(\cD_L)}\ls \f{L}{N}\|f\|_{L^2}\|\nabla m\|_{L^\infty}.
\eeq

\underline{\it{Point (2).}} When $m=m_k$ for integer $k\ge 1$, we apply the multivariate Taylor expansion with remainder: for any $x,v\in\cD_L$,
\beq
m_k(x)-m_k(v)=\sum_{i=1}^{k-1}\sum_{|\alpha|=i}\f{\partial^\alpha m_k(v)}{\alpha!}(x-v)^\alpha+\sum_{|\alpha|=k}\f{\partial^\alpha m_k(\xi)}{\alpha!}(x-v)^\alpha,
\eeq
where $\xi=v+\theta(x-v)$ for some $\theta\in(0,1)$, and $\alpha$ denotes a multi-index. Since $|\partial^\alpha m_k(v)|\ls m_{k-|\alpha|}(v)$ and $\|\partial^\alpha m_k\|_{L^\infty}\le C$ when $|\alpha|=k$, with $C$ depending only on $k$, we obtain
\beq
\ba
&|m_k(x)-m_k(v)|\ls \sum_{i=1}^{k-1}|\na^i m_k(v)|\,|x-v|^{i}+\|\na^k m_k\|_{L^\infty}|x-v|^k\\ &\ls m_{k-1}(v)\sum_{i=1}^{k-1}|x-v|^i+|x-v|^k\ls m_{k-1}(v)(|x-v|+|x-v|^k),
\ea
\eeq
where the implicit constant depends only on $k$. Using the same method as in Point (1), we derive
\beq
|(\tpn (fm_k)-(\tpn f)\,m_k)(x)|\ls \int_{\cD_L}|f(v)|\,m_{k-1}(v)\,(|x-v|+|x-v|^k)\,|D_N(x-v)|\,d v.
\eeq
By Young's convolution inequality, \eqref{eq:DNes}, and the condition $N>L$,
\beq
\|\tpn (fm_k)-(\tpn f)\,m_k\|_{L^2}\ls \|f\,m_{k-1}\|_{L^2}\left(\|\,|\cdot|\,D_N(\cdot)\,\|_{L^1(\cD_L)}+\|\,|\cdot|^k\,D_N(\cdot)\,\|_{L^1(\cD_L)}\right)\ls \f{L}{N}\|f\,m_{k-1}\|_{L^2},
\eeq
with some constant depending only on $k$ and the bump function $\Phi$.
\end{proof}
The commutator estimate in Lemma \ref{thm:comm} provides a fundamental tool for controlling the interaction between the smoothing projection operator and weight functions. As a direct consequence, we obtain the following corollary which will be essential for estimating the weighted norms of the truncated collision operator:
\begin{cor}\label{thm:gnmk}Let $g\in L^2(\cD_L)$ and let $m_k$ be defined by \eqref{eq:mk}. Then the following estimates hold:
\begin{itemize}
	\item [(1)]When $L\le N$ with $k\ge 1$,
	\ben\label{eq:gnmk}
	\|\tpn(g)\,m_k\|_{L^2}\ls \|g\,m_k\|_{L^2},
	\een
	where the constant depends only on $k$.
	\item [(2)]When $k\ge 1$,
	\ben\label{eq:gymk}
	\|\tpn(g\,\psi^R\,m_k)-\tpn(g)\,\psi^R\,m_k\|_{L^2}\ls \f{L}{N}\|g\,m_{k-1}\|_{L^2},
	\een
	where the constant depends only on $k$.
\end{itemize}

\end{cor}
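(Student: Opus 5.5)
Both parts follow from Lemma \ref{thm:comm} by a triangle inequality, with one extra argument for the weight factor in part (2).

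For part (1), I write
\[
\tpn(g)\,m_k=\tpn(g\,m_k)-\big(\tpn(g\,m_k)-(\tpn g)\,m_k\big).
\]
The first term satisfies $\|\tpn(g m_k)\|_{L^2}\le\|g m_k\|_{L^2}$, since $\tpn$ is a contraction on $L^2$ (its multiplier is $0\le\Phi\le1$). The hypothesis $L\le N$ is the setting of Lemma \ref{thm:comm} (the lemma asks $N>L$, but its proof only uses $L/N\lesssim1$). Point (2) of that lemma then bounds the commutator by $\frac{L}{N}\|g\,m_{k-1}\|_{L^2}\lesssim\|g\,m_{k-1}\|_{L^2}$. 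Since $m_{k-1}\le m_k$ (because $m_1\ge1$ and $m_k=m_{k-1}m_1$), this is at most $\|g\,m_k\|_{L^2}$, which gives \eqref{eq:gnmk}.

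For part (2), I set $m:=\psi^R m_k$, a smooth periodic function supported in $B(0,R)$, and reuse the kernel representation from the proof of Lemma \ref{thm:comm}:
\[
\tpn(g m)(x)-(\tpn g)(x)\,m(x)=\int_{\cD_L} g(v)\,\big(m(v)-m(x)\big)\,D_N(x-v)\,dv .
\]
The goal is the pointwise bound
\[
|m(x)-m(v)|\lesssim m_{k-1}(v)\big(|x-v|+|x-v|^k\big),
\]
after which Young's inequality and \eqref{eq:DNes} give $\frac{L}{N}\|g\,m_{k-1}\|_{L^2}$, as before. To prove this bound I Taylor expand $m$ around $v$ up to order $k$. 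By Leibniz,
\[
\partial^\alpha m=\sum_{\beta\le\alpha}\binom{\alpha}{\beta}\,\partial^{\alpha-\beta}\psi^R\,\partial^\beta m_k .
\]
Derivatives of $\psi^R$ are $O(R^{-|\alpha-\beta|})$ and supported in $|v|\le R$, where $R^{-1}\lesssim\wei^{-1}$. Together with the listed bound $|\partial^\beta m_k|\lesssim m_{k-|\beta|}$, this gives $|\partial^\alpha m|\lesssim m_{k-|\alpha|}\le m_{k-1}$ for $1\le|\alpha|\le k-1$. For $|\alpha|=k$ it gives $\|\partial^\alpha m\|_{L^\infty}\lesssim1$, uniformly in $R$.

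The step needing the most care is this uniform-in-$R$ control of $\partial^\alpha(\psi^R m_k)$, specifically that scaling $\psi^R(v)=\psi(v/R)$ makes each derivative of $\psi^R$ cost $R^{-1}\lesssim\wei^{-1}$ on its support. I also need to check periodicity: the mean-value and Taylor arguments on the torus are legitimate because $m$ is smooth and periodic and $D_N$ is concentrated near $0$. This is handled exactly as in Lemma \ref{thm:comm}, by measuring $|x-v|$ as the torus distance. With those checks, the constant depends only on $k$ and $\Phi$, which gives \eqref{eq:gymk}.
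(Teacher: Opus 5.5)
Your proof is correct. Part (1) is the paper's argument essentially verbatim.

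For part (2) you take a different route. You treat $\psi^R m_k$ as a single periodic weight and rerun the Taylor-expansion/kernel argument from the proof of Lemma \ref{thm:comm} (2). This needs the Leibniz bound $|\partial^\alpha(\psi^R m_k)|\ls m_{k-|\alpha|}$, and hence control of all derivatives of $\psi^R$ up to order $k$, namely $\|\nabla^j\psi^R\|_{L^\infty}\ls R^{-j}$. That holds when $\psi^R=\psi(\cdot/R)$, as you note.

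The paper instead inserts the intermediate term $\tpn(g\,m_k)\,\psi^R$.
\begin{itemize}
\item The difference $\tpn(g\,\psi^R m_k)-\tpn(g\,m_k)\,\psi^R$ is Lemma \ref{thm:comm} (1) applied to $g\,m_k$ with $m=\psi^R$. This gives $\f{L}{N}\|\na\psi^R\|_{L^\infty}\|g\,m_k\|_{L^2}\ls \f{1}{N}\|g\,m_k\|_{L^2}$. The crude bound $m_k=m_{k-1}m_1\ls L\,m_{k-1}$ then turns this into $\f{L}{N}\|g\,m_{k-1}\|_{L^2}$: the factor $1/R$ from $\na\psi^R$ pays for the extra weight.
\item The other difference is Lemma \ref{thm:comm} (2) together with $0\le\psi^R\le 1$.
\end{itemize}

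What each route buys:
\begin{itemize}
\item The paper's version is shorter and uses the lemma as a black box. Beyond what Lemma \ref{thm:comm} already assumes, it needs nothing about $\psi^R$ except $\|\na\psi^R\|_{L^\infty}\ls R^{-1}$.
\item Yours is self-contained and avoids the loss-and-recovery through $m_k\ls L\,m_{k-1}$. Since $\psi^R m_k=\psi^R\wei^k$, your part (2) only uses derivative bounds of $\wei^k$ on $B(0,R)$. It never touches the behavior of $m_k$ in the annulus $R<|v|<2R$.
\end{itemize}

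Both versions tacitly require $N\gtrsim L$ in part (2). In yours this is needed to absorb the $(L/N)^k$ term; in the paper's it enters through the standing hypothesis of Lemma \ref{thm:comm}. Both also require $R\gtrsim 1$.
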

\begin{proof}\underline{\it{Point (1).}} 
Applying Lemma \ref{thm:comm} (2) yields
\beq
\ba
& \|\tpn(g)\,m_k\|_{L^2}\le \|\tpn(g)\,m_k-\tpn(g\,m_k)\|_{L^2}+\|\tpn(g\,m_k)\|_{L^2}\\ &\ls \f{L}{N}\|g\,m_{k-1}\|_{L^2}+\|g\, m_k\|_{L^2}\ls \rr{\f{L}{N}+1}\|g\, m_k\|_{L^2}. 
\ea
\eeq
Since $L\le N$, we obtain
\ben
\|\tpn(g)\,m_k\|_{L^2}\ls \|g\,m_k\|_{L^2},
\een
where the constant depends only on $k$. This completes the proof of Point (1).
\smallskip

\underline{\it{Point (2).}} Since
\beq
\|\tpn(g\,\psi^R\,m_k)-\tpn(g)\,\psi^R\,m_k\|_{L^2}\le \|\tpn(g\,\psi^R\,m_k)-\tpn(g\,m_k)\,\psi^R\|_{L^2}+\|\tpn(g\,m_k)\,\psi^R-\tpn(g)\,\psi^R\,m_k\|_{L^2},
\eeq
applying Lemma \ref{thm:comm} (1) to the first term, and noting that $\|\na \psi^R\|_{L^\infty}\ls\f{1}{R}$ and $L/R=\f{3+\sqrt{2}}{2}$ is a constant, we obtain
\beq
\|\tpn(g\,\psi^R\,m_k)-\tpn(g\,m_k)\,\psi^R\|_{L^2}\ls \f{L}{N}\|g\,m_k\|_{L^2}\|\na \psi^R\|_{L^\infty}\ls \f{1}{N}\|g\,m_k\|_{L^2}\ls  \f{L}{N}\|g\,m_{k-1}\|_{L^2}.
\eeq
Here we use $m_k=m_{k-1}m_1\ls L\,m_{k-1}$. For the second term, applying Lemma \ref{thm:comm} (2) and $0\le \psi^R\le 1$ yields
\beq
\|\tpn(g\,m_k)\,\psi^R-\tpn(g)\,\psi^R\,m_k\|_{L^2}\le \|\tpn(g\,m_k)-\tpn(g)\,m_k\|_{L^2}\ls \f{L}{N}\|g\,m_{k-1}\|_{L^2}.
\eeq
This completes the proof of Point (2).
\end{proof}
\medskip
The estimates in Lemma \ref{thm:comm} and Corollary \ref{thm:gnmk} provide the necessary control, and will be used extensively in the subsequent stability analysis, particularly when estimating the error terms arising from the discretization of the Boltzmann collision operator. The key observation is that the smoothing projection operator preserves weighted norms up to a small error of order $L/N$, which can be made arbitrarily small by choosing a sufficiently large truncation parameter $N$.
\subsection{Estimates for periodic Boltzmann collision operator} From \cite{PR00}, we have the following proposition:
\begin{prop}\label{prop:betalm} Let $l,m\in\mathbb{Z}^3$. Then 
\ben Q_p^+(e^{i\f{\pi}{L}l\cdot v},e^{i\f{\pi}{L}m\cdot v})= \beta(l,m)e^{i\f{\pi}{L}(l+m)\cdot v},\ \ \ Q_p^-(e^{i\f{\pi}{L}l\cdot v},e^{i\f{\pi}{L}m\cdot v})= \beta(l,l)e^{i\f{\pi}{L}(l+m)\cdot v},
\een 
where 
\ben\label{eq:defbeta} \beta(l,m)=
(2R)^{3+\gamma}(4\pi)^2\int_0^1r^{2+\gamma}\sinc(\pi \lambda r|l-m|)\,\sinc(\pi \lambda r|l+m|)\,dr.
\een
Here $\lambda=R/L=\f{2}{3+\sqrt{2}}$, and the sinc function is defined by $\sinc(x)=\f{\sin x}{x}$ for $x\neq 0$ and $\sinc(0)=1$.
\end{prop}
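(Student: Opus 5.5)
The computation is direct: substitute the plane waves into the definition \eqref{defQp} of $Q_p$, factor out the common exponential, and reduce what remains to a radial integral that the spherical averages of plane waves evaluate in closed form. With $B=|z|^\gamma$ and $b\equiv1$, set $e_l(v)=e^{i\frac{\pi}{L}l\cdot v}$. Then
\[
e_l(v+z^-)\,e_m(v+z^+)=e_{l+m}(v)\exp\Big(i\tfrac{\pi}{L}(l\cdot z^-+m\cdot z^+)\Big),
\qquad e_l(v+z)\,e_m(v)=e_{l+m}(v)\,e^{i\frac{\pi}{L}l\cdot z}.
\]
This shows that both $Q_p^\pm$ map the pair $(e_l,e_m)$ to a multiple of $e_{l+m}$. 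The multiplier is the $(z,\sigma)$-integral over $\{|z|\le 2R\}\times\S^2$. Here we use $2R\le L$ (true since $L=\frac{3+\sqrt2}{2}R$), so the truncated ball sits inside $\cD_L$ and the cube integral becomes an integral over $B(0,2R)$.

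\textbf{Gain term.} Write $l\cdot z^-+m\cdot z^+=\frac{(l+m)\cdot z}{2}+\frac{|z|\,(m-l)\cdot\sigma}{2}$. The $\sigma$-integral uses the standard identity $\int_{\S^2}e^{i a\cdot\sigma}\,d\sigma=4\pi\,\sinc(|a|)$ with $a=\frac{\pi}{2L}|z|(m-l)$. The result depends only on $|z|$. Passing to polar coordinates $z=\rho\omega$ and applying the same identity to the $\omega$-integral of $e^{i\frac{\pi}{2L}\rho(l+m)\cdot\omega}$ gives
\[
(4\pi)^2\int_0^{2R}\rho^{2+\gamma}\,\sinc\Big(\tfrac{\pi\rho}{2L}|l-m|\Big)\,\sinc\Big(\tfrac{\pi\rho}{2L}|l+m|\Big)\,d\rho .
\]
Substituting $\rho=2Rr$ produces the prefactor $(2R)^{3+\gamma}$ and turns the arguments into $\pi\lambda r|l\mp m|$ with $\lambda=R/L$. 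This is exactly $\beta(l,m)$ in \eqref{eq:defbeta}.

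\textbf{Loss term.} The $\sigma$-integral of the constant contributes $4\pi$. The $z$-integral is $\int_{|z|\le2R}|z|^\gamma e^{i\frac{\pi}{L}l\cdot z}\,dz=4\pi\int_0^{2R}\rho^{2+\gamma}\sinc(\frac{\pi}{L}\rho|l|)\,d\rho$. Since $\sinc(0)=1$, this equals the formula for $\beta$ evaluated at $(l,l)$: there $|l-m|=0$ and $|l+m|=2|l|$, so the $\sinc$ argument is $\frac{\pi\rho}{2L}\cdot 2|l|$, which matches. Hence the loss coefficient is $\beta(l,l)$.

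The only step needing care is the sphere identity, including the degenerate case $a=0$, and keeping the $\frac{1}{2}$ factors consistent. The identity follows by taking the polar axis along $a$: $2\pi\int_{-1}^{1}e^{i|a|t}\,dt=4\pi\frac{\sin|a|}{|a|}$. No real obstacle is expected; the proof is a bookkeeping computation, as in \cite{PR00}.
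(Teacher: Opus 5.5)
Your computation is correct. The phase splitting $l\cdot z^-+m\cdot z^+=\frac{(l+m)\cdot z}{2}+\frac{|z|\,(m-l)\cdot\sigma}{2}$, the two uses of $\int_{\S^2}e^{ia\cdot\sigma}\,d\sigma=4\pi\sinc(|a|)$, the inclusion $B(0,2R)\subset\cD_L$ (since $2R<L$), and the rescaling $\rho=2Rr$ together reproduce $\beta(l,m)$ and $\beta(l,l)$ exactly. The paper gives no proof of its own and simply cites \cite{PR00}, where this same direct plane-wave computation is the standard derivation, so your route is essentially the intended one.
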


As a direct corollary, we have 

\begin{lem}\label{thm:betalm}  For any $g,h,f\in L^2(\cD_L)$,
\beq
Q_p(g,h)=(2L)^{-6}\sum_{l\in\Z^3}\sum_{m\in\Z^3}\hat{g}(l)\hat{h}(m)Q_p(e^{i\f{\pi}{L}l\cdot v},e^{i\f{\pi}{L}m\cdot v})=(2L)^{-6}\sum_{l\in\Z^3}\sum_{m\in\Z^3}\hat{g}(l)\hat{h}(m)(\beta(l,m)-\beta(l,l))e^{i\f{\pi}{L}(l+m)\cdot v}.
\eeq
In particular,
\ben\label{eq:Qffkcon}
\hat{Q}^+_p(g,h)(k)=(2L)^{-3}\sum_{l+m=k}\hat{g}(l)\hat{h}(m)\beta(l,m),\ \  \hat{Q}^-_p(g,h)(k)=(2L)^{-3}\sum_{l+m=k}\hat{g}(l)\hat{h}(m)\beta(l,l).
\een
\end{lem}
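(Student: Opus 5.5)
The plan is to expand $g$ and $h$ in their Fourier series \eqref{eq:Ffk2}, use bilinearity of $Q_p$, and then apply Proposition \ref{prop:betalm} term by term.

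\textbf{Step 1: expansion.} Write
\[
g=(2L)^{-3}\sum_{l}\hat g(l)e^{i\f{\pi}{L}l\cdot v},\qquad h=(2L)^{-3}\sum_m \hat h(m)e^{i\f{\pi}{L}m\cdot v}.
\]
Bilinearity of $Q_p^\pm$ gives the double sum formally. Proposition \ref{prop:betalm} then replaces each $Q_p(e^{i\f{\pi}{L}l\cdot v},e^{i\f{\pi}{L}m\cdot v})$ by $(\beta(l,m)-\beta(l,l))e^{i\f{\pi}{L}(l+m)\cdot v}$.

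\textbf{Step 2: interchanging sums and the integral.} This is the main obstacle, since the series converge only in $L^2$. I would justify it as follows.

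From \eqref{eq:defbeta}, $|\beta(l,m)|\le C_R$ uniformly in $l,m$, because $|\sinc|\le1$.

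Next, $Q_p^+(g,h)$ and $Q_p^-(g,h)$ are continuous bilinear maps $L^2(\cD_L)\times L^2(\cD_L)\to L^1(\cD_L)$. Indeed, the integrand involves $g(v+z^-)h(v+z^+)$ with $z$ ranging over a bounded set of the periodic domain. Integrating in $v$ and applying Cauchy--Schwarz for each fixed $(z,\sigma)$ gives $\le\|g\|_{L^2}\|h\|_{L^2}$. Integrating over the bounded set of $(z,\sigma)$ with bounded kernel $|z|^\gamma\mathbf 1_{|z|\le2R}$ yields the bound $C_R\|g\|_{L^2}\|h\|_{L^2}$.

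Hence we may approximate $g,h$ by their partial sums $\cP_M g,\cP_M h$. For those, the identity is a finite sum and holds exactly. Passing $M\to\infty$ gives convergence in $L^1$, and therefore convergence of each Fourier coefficient.

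\textbf{Step 3: Fourier coefficients.} For finite sums, compute the $k$-th Fourier coefficient via \eqref{eq:Ffk}. Since $\int_{\cD_L}e^{i\f{\pi}{L}(l+m-k)\cdot v}dv=(2L)^3\delta_{l+m,k}$, we get
\[
\hat Q_p^+(g,h)(k)=(2L)^{-3}\sum_{l+m=k}\hat g(l)\hat h(m)\beta(l,m),
\]
and similarly for $Q_p^-$ with $\beta(l,l)$.

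For general $g,h\in L^2$, the right-hand side converges absolutely by Cauchy--Schwarz, using $\sum_l|\hat g(l)||\hat h(k-l)|\le\|\hat g\|_{\ell^2}\|\hat h\|_{\ell^2}$ together with the boundedness of $\beta$. It is also continuous in $(g,h)$ in $L^2\times L^2$. The left-hand side is continuous by the $L^1$ bound from Step 2. Both sides agree on trigonometric polynomials, so by density they agree everywhere, which gives \eqref{eq:Qffkcon}.

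The double-series representation of $Q_p(g,h)$ is then understood as the Fourier series with these coefficients, which gives the first identity of the statement.
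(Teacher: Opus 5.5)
Your proposal is correct and follows the paper's route. The paper presents this lemma as a direct corollary of Proposition~\ref{prop:betalm}: expand $g,h$ in Fourier series, use bilinearity of $Q_p^\pm$, and read off the coefficient of $e^{i\frac{\pi}{L}k\cdot v}$ from the pairs with $l+m=k$. Your Step~2 (the uniform bound $|\beta(l,m)|\le C_R$, the continuity of $Q_p^\pm\colon L^2\times L^2\to L^1$, and the density argument) supplies the justification for interchanging sums and integrals, which the paper leaves implicit.
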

We also state the following estimate for $\beta(l,m)$:

\begin{lem}\label{thm:betalmest} For the $\beta(l,m)$ defined in \eqref{eq:defbeta}, the following estimates hold:
\begin{itemize}
	\item[(1)] For any $l,m\in\Z^3$,
\ben\label{eq:betalm}
|\beta(l,m)|\ls R^{3+\gamma}\br{l+m}^{-1}\br{l-m}^{-1}\br{|l+m|-|l-m|}^{-1}.
\een
\item[(2)] For any $0\le\delta<1$ and $m\in\Z^3$,
\ben\label{eq:betam}
\sum_{l\in\Z^3}|l|^\delta |\beta(l,m)|^2\ls R^{2(3+\gamma)}\br{m}^{-2+\delta}.
\een
For any $0\le\delta<1$ and $l\in\Z^3$,
\ben\label{eq:betal}
\sum_{m\in\Z^3}|m|^\delta|\beta(l,m)|^2\ls R^{2(3+\gamma)}\br{l}^{-2+\delta}.
\een
\end{itemize}
	
\end{lem}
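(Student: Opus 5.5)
The estimate \eqref{eq:betalm} should follow from an explicit formula for the sinc product, and the summation bounds \eqref{eq:betam}--\eqref{eq:betal} from counting lattice points on level sets of $|l+m|-|l-m|$. Throughout, set $a:=\pi\lambda|l-m|$ and $b:=\pi\lambda|l+m|$. Since $\lambda$ is a fixed constant, $\br{a}\sim\br{l-m}$, $\br{b}\sim\br{l+m}$ and $\br{a-b}\sim\br{|l+m|-|l-m|}$. Also, $\beta(l,m)$ depends only on $(|l-m|,|l+m|)$, which is invariant under $l\leftrightarrow m$, so $\beta(l,m)=\beta(m,l)$ and \eqref{eq:betal} follows from \eqref{eq:betam}. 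It therefore suffices to prove (1) and \eqref{eq:betam}.

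\emph{Part (1).} I would split into two cases.

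First suppose $a\ge1$ and $b\ge1$. Use the identity
\[
\sinc(ar)\,\sinc(br)=\f{\cos((a-b)r)-\cos((a+b)r)}{2ab\,r^2},
\]
which gives
\[
\beta(l,m)=\f{C R^{3+\gamma}}{2ab}\int_0^1 r^{\gamma}\big[\cos((a-b)r)-\cos((a+b)r)\big]\,dr .
\]
Since $r^\gamma$ is monotone and bounded on $[0,1]$, integration by parts (or the second mean value theorem) gives $|\int_0^1 r^\gamma\cos(cr)\,dr|\ls\min(1,|c|^{-1})\sim\br{c}^{-1}$. Both cosine terms are then controlled by $\br{a-b}^{-1}$, because $a+b\ge|a-b|$. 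This yields $|\beta|\ls R^{3+\gamma}\br{a}^{-1}\br{b}^{-1}\br{a-b}^{-1}$.

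Now suppose, say, $a\le1$; the case $b\le1$ is symmetric. Then $\br{a}\sim1$ and $\br{a-b}\sim\br{b}$, so it suffices to show the integral is $O(\br{b}^{-2})$. Write the integral as $b^{-1}\int_0^1 r^{1+\gamma}\sinc(ar)\sin(br)\,dr$. The weight $r^{1+\gamma}\sinc(ar)$ is $C^1$ on $[0,1]$ with derivative bounded uniformly in $a\le1$, so one more integration by parts gives an extra factor $\br{b}^{-1}$. If $b\le1$ as well, the trivial bound $|\sinc|\le1$ suffices.

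\emph{Part (2).} By (1), it is enough to show
\[
\sum_{l}|l|^\delta\br{l+m}^{-2}\br{l-m}^{-2}\br{|l+m|-|l-m|}^{-2}\ls\br{m}^{-2+\delta}.
\]
The key identity is $|l+m|-|l-m|=\f{4\,l\cdot m}{|l+m|+|l-m|}$. I would split the sum into two regions.

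\emph{Region $|l|\ge2|m|$.} Here $|l\pm m|\sim|l|$ and $\br{|l+m|-|l-m|}\sim\br{|m|\cos\theta}$, where $\theta$ is the angle between $l$ and $m$. On the dyadic shell $|l|\sim2^j$, the factor $\br{|m|\cos\theta}^{-2}$ effectively restricts $l$ to a slab of width $\sim2^j/|m|$ around the plane $m^\perp$. That slab contains $\ls2^{3j}/|m|$ lattice points, so the shell contributes $\ls2^{j(\delta-1)}/|m|$. Summing over $2^j\gs|m|$ uses $\delta<1$ and gives $|m|^{\delta-2}$.

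\emph{Region $|l|\le2|m|$.} Here $|l|^\delta\ls|m|^\delta$, and since $|l+m|+|l-m|\ge2|m|$, one of the two factors, say $\br{l+m}$, is $\gs\br{m}$. Put $w=l-m$. It then remains to show that
\[
\sum_{|w|\ls|m|}\br{w}^{-2}\br{|w+2m|-|w|}^{-2}
\]
is $O(1)$.

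\emph{Main obstacle.} This last sum is where I expect the difficulty. A naive count treats each level set $\{|w+2m|-|w|\in[n,n+1]\}$ on the shell $|w|\sim2^j$ as having $\ls2^{2j}$ lattice points. That makes each dyadic shell contribute $O(1)$, and summing over shells loses a factor $\log\br{m}$.

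To remove the logarithm I would exploit the fact that these level sets are hyperboloid sheets with foci $0$ and $-2m$. The gradient of $|w+2m|-|w|$ has size $|\widehat{w+2m}-\hat w|$, which is of order one away from the axis through $0$ and $-2m$, so for $|w|\ll|m|$ the sheets have thickness $O(1)$. Moreover, $|w+2m|-|w|\approx2|m|$ unless $w$ has a component of size $\gs|m|$ along $-\hat m$. Hence for $|w|\ll|m|$ the factor $\br{|w+2m|-|w|}^{-2}$ is actually $\ls|m|^{-2}$. This makes every small shell summable, leaving only the shells with $|w|\sim|m|$, of which there are $O(1)$.

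If this refinement still fails, a fallback is to trade part of the decay $\br{l-m}^{-2}$ for a factor $\br{l-m}^{-(1-\delta)}$, using the slack coming from $\delta<1$ to pay for the counting.
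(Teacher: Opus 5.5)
Your proof is correct. Part (1) matches the paper, and so does the reduction of \eqref{eq:betal} to \eqref{eq:betam} via $\beta(l,m)=\beta(m,l)$. Your case $a\le 1$ is more general than needed: since $\pi\lambda>1$ and $l\pm m\in\Z^3$, it forces $a=0$, which is exactly the paper's ``one of $\xi,\eta$ vanishes'' case.

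For part (2) you take a genuinely different route. The paper does no counting. It uses $\bigl||l+m|-|l-m|\bigr|\sim |l\cdot m|/\sqrt{|l|^2+|m|^2}$ to turn the summand into an explicit rational expression and compares the sum with an integral over $\R^3$. It then passes to polar coordinates, discarding $\sin\theta$ and enlarging constants so that the angular integral becomes $\int_0^\pi (A+B\cos\theta)^{-1}(A+B^2\cos^2\theta)^{-1}\,d\theta$ with $A=r^2+|m|^2+1$ and $B=2r|m|$. It evaluates this exactly by partial fractions and bounds the two radial integrals by substitution, obtaining $\log\br{m}\,\br{m}^{\delta-3}+\br{m}^{\delta-2}$. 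That route is a single elementary computation, but it hides the geometry. Your dyadic/level-set argument shows where the sharp contribution comes from: the slab $|l\cdot m|\ls |m|$ with $|l|\sim|m|$. It also shows exactly where $\delta<1$ enters, namely in summing the shells $|l|\gs|m|$. The price is lattice-point counts on hyperboloidal bands, which you only sketch.

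The hedging in your ``main obstacle'' is unnecessary, and the count closes cleanly. Set $h(w):=|w+2m|-|w|$.
\begin{itemize}
\item For $|w|\le|m|/2$, the triangle inequality gives $h(w)\ge 2|m|-2|w|\ge|m|$. That region therefore contributes $\ls|m|^{-2}\sum_{|w|\le|m|/2}\br{w}^{-2}\ls|m|^{-1}$.
\item On $|m|/2\le|w|\le 3|m|$ one has $\br{w}^{-2}\ls|m|^{-2}$, so it suffices to show $\sum\br{h(w)}^{-2}\ls|m|^2$ there. Points with $|h|\ge|m|/2$ contribute $O(|m|^3\cdot|m|^{-2})$.
\item Where $|h|\le|m|/2$, the identity $|\nabla h|^2=(4|m|^2-h^2)/(|w|\,|w+2m|)$ gives $|\nabla h|\gs 1$. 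Each band $\{n\le h\le n+1\}$ then has $O(1)$ thickness and contains $\ls|m|^2$ lattice points, and $\sum_n\br{n}^{-2}<\infty$.
\end{itemize}
The degeneration of $\nabla h$ near the focal axis, which makes the naive count look delicate, only occurs where the weight $\br{h}^{-2}$ is already $\ls|m|^{-2}$.
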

The proof is given in Section \ref{sec:proofbeta}.
\medskip

We now establish estimates for the periodic Boltzmann operator. Note that our results hold for general periodic functions $g,h\in L^2(\cD_L)$; therefore, we cannot directly apply the results for the $\R^3$ case.
\begin{lem}\label{thm:Qpghf}
 If $g,h\in L^2(\cD_L)$, for $0\le s< \f12, \f65< p\le 2$ such that $s+\f3p<\f52$ the following upper bound holds:
\ben\label{eq:Qpgh}
\|Q_p^+(g,h)\|_{\dot{H}^s}\ls L^{3-\f3p-s+\gamma} \|g\|_{L^2}\|h\|_{L^p},\ \ \ \|Q_p^+(g,h)\|_{\dot{H}^s}\ls L^{3-\f3p-s+\gamma} \|g\|_{L^p}\|h\|_{L^2}.
\een
 \end{lem}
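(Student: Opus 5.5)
The plan is to work entirely on the Fourier side, using the discrete convolution structure \eqref{eq:Qffkcon} of Lemma \ref{thm:betalm} together with the summability bounds on $\beta$ from Lemma \ref{thm:betalmest}(2). I prove the first inequality in \eqref{eq:Qpgh}; the second follows by the symmetric argument with the roles of $l$ and $m$ swapped and \eqref{eq:betal} in place of \eqref{eq:betam}. By \eqref{eq:Hda} and \eqref{eq:Qffkcon},
\[
\|Q_p^+(g,h)\|_{\dot H^s_{per}}^2=(2L)^{-9}\Big(\f{\pi}{L}\Big)^{2s}\sum_{k\in\Z^3}|k|^{2s}\Big|\sum_{l+m=k}\hat g(l)\hat h(m)\beta(l,m)\Big|^2 .
\]

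\emph{Step 1 (Cauchy--Schwarz in $l$).} For fixed $k$, write $m=k-l$ and put the full $\ell^2$ norm on $\hat g$:
\[
\Big|\sum_{l}\hat g(l)\hat h(k-l)\beta(l,k-l)\Big|^2\le \|\hat g\|_{\ell^2}^2\sum_l |\hat h(k-l)|^2|\beta(l,k-l)|^2 .
\]
Then I sum in $k$, use $|k|^{2s}=|l+m|^{2s}\ls |l|^{2s}+|m|^{2s}$, and exchange the order of summation to reach
\[
\sum_m|\hat h(m)|^2\sum_l\big(|l|^{2s}+|m|^{2s}\big)|\beta(l,m)|^2 .
\]
Since $2s<1$, I apply \eqref{eq:betam} with $\delta=2s$ to the first part and with $\delta=0$ to the second part. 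Both give $R^{2(3+\gamma)}\br{m}^{-2+2s}$. Hence
\[
\|Q_p^+(g,h)\|_{\dot H^s_{per}}^2\ls L^{-9-2s}R^{2(3+\gamma)}\|\hat g\|_{\ell^2}^2\sum_m|\hat h(m)|^2\br{m}^{-2+2s}.
\]

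\emph{Step 2 (Hölder and Hausdorff--Young).} Let $q=p'$. I apply Hölder in $m$ with exponents $q/2$ and $r$, where $\f1r=1-\f2q=\f2p-1$:
\[
\sum_m|\hat h(m)|^2\br{m}^{-2+2s}\le\|\hat h\|_{\ell^q}^2\,\big\|\br{\cdot}^{-2+2s}\big\|_{\ell^r}.
\]
The $\ell^r$ norm is finite exactly when $r(2-2s)>3$, i.e. $2-2s>\f6p-3$, which is the hypothesis $s+\f3p<\f52$. The condition $p>\f65$ keeps $r$ in the admissible range, and $p\le 2$ is what permits Hausdorff--Young. Then \eqref{eq:HY} gives $\|\hat h\|_{\ell^q}\le(2L)^{3/q}\|h\|_{L^p}$, and \eqref{eq:fgfl2} gives $\|\hat g\|_{\ell^2}^2=(2L)^3\|g\|_{L^2}^2$.

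\emph{Step 3 (powers of $L$).} Using $R\sim L$, the total power of $L$ in the squared norm is
\[
-9-2s+2(3+\gamma)+3+6\Big(1-\f1p\Big)=6-\f6p-2s+2\gamma .
\]
Taking square roots gives $L^{3-3/p-s+\gamma}\|g\|_{L^2}\|h\|_{L^p}$, which is the first bound in \eqref{eq:Qpgh}.

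The analysis itself is short once Lemma \ref{thm:betalmest} is available. The only delicate point is choosing which function carries the $\ell^2$ norm in the Cauchy--Schwarz step, so that the $\beta$-sum falls on the correct index and produces the decay $\br{m}^{-2+2s}$ needed to absorb $\hat h\in\ell^{p'}$. The exponent bookkeeping in Steps 2 and 3 is where errors are most likely.
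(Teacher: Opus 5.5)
Your proof is correct and follows the paper's argument. The steps match: the convolution formula \eqref{eq:Qffkcon}, Cauchy--Schwarz in $l$ with $\hat g$ carrying the $\ell^2$ norm, \eqref{eq:betam} with $\delta=2s$ and $\delta=0$, H\"older with exponents $q/2$ and $p/(2-p)$ plus Hausdorff--Young, and the same $L$-bookkeeping. The only difference is that you apply Cauchy--Schwarz before splitting $|k|^{2s}\lesssim|l|^{2s}+|m|^{2s}$, whereas the paper splits first into $\mathcal{I}_1,\mathcal{I}_2$; your ordering is slightly cleaner on the exponents.
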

\begin{proof}[Proof of of Lemma \ref{thm:Qpghf}] By \eqref{eq:Qffkcon} and \eqref{eq:Hda}, for functions $g,h\in L^2(\cD_L)$,
\beq
\ba
\|Q^+_p(g,h)\|_{\dot{H}^s}^2&=(2L)^{-3}\sum_{k\in\Z^3}|\hat{Q}^+_p(g,h)(k)|^2\rr{\f{\pi|k|}{L}}^{2s}=(2L)^{-9-2s}\sum_{k\in\Z^3}|k|^{2s}\left|\sum_{l+m=k}\hat{g}(l)\,\hat{h}(m)\,\beta(l,m)\right|^2
\\ &\ls (2L)^{-9-2s}\sum_{k\in\Z^3}\left|\sum_{l+m=k}\rr{|l|^{2s}+|m|^{2s}}|\hat{g}(l)|\,|\hat{h}(m)|\,|\beta(l,m)|\right|^2\ls L^{-9-2s}(\cI_1+\cI_2),
\ea
\eeq
where $\cI_1,\cI_2$ are defined by
\beq
\begin{cases}
	\ds \cI_1:=\sum_{k\in\Z^3}\left|\sum_{l+m=k}|l|^{2s}\,|\hat{g}(l)|\,|\hat{h}(m)|\,|\beta(l,m)|\right|^2,\\[1.5em]
	\ds \cI_2:=\sum_{k\in\Z^3}\left|\sum_{l+m=k}|m|^{2s}\,|\hat{g}(l)|\,|\hat{h}(m)|\,|\beta(l,m)|\right|^2.
\end{cases}
\eeq
\smallskip

For $\cI_1$, Lemma \ref{thm:betalmest} (2) and Cauchy's inequality give
\beq
\ba
\cI_1&\le \sum_{k\in\Z^3}\rr{\sum_{l+m=k}|l|^{2s}\,|\hat{h}(m)|^{2}\,|\beta(l,m)|^{2}}\rr{\sum_{l\in\Z^3}|\hat{g}(l)|^2}\\
&=\rr{\sum_{l\in\Z^3}|\hat{g}(l)|^2}\sum_{m\in\Z^3}\rr{|\hat{h}(m)|^{2}\sum_{l\in\Z^3}|l|^{2s}\,|\beta(l,m)|^{2}}\ls L^{3}\|g\|_{L^2}^2\sum_{m\in\Z^3}|\hat{h}(m)|^{2}\rr{R^{2(3+\gamma)}\br{m}^{-2+2s}}.
\ea
\eeq
Let $q=\f{p}{p-1}$ and $q'=\f{p}{2-p}$. Then $\f1p+\f1q=1$ and $\f2q+\f{1}{q'}=1$. By H\"older's inequality and the Hausdorff-Young inequality \eqref{eq:HY},
\beq
\sum_{m\in\Z^3}|\hat{h}(m)|^{2}\br{m}^{-2+2s}\le \rr{\sum_{m\in\Z^3}|\hat{h}(m)|^q}^{\f2q}\rr{\sum_{m\in\Z^3}\rr{\br{m}^{-2+2s}}^{q'}}^{\f{1}{q'}}\ls \|\hat{h}\|_{\ell^q}^2\ls L^{6/q}\|h\|_{L^p}^2.
\eeq
Here we use that when $s+\f3p<\f52$, $(-2+2s)\,q'<-3$, and thus $\sum_{m\in\Z^3}\br{m}^{(-2+2s)\,q'}\ls 1$. Hence,
\beq
\cI_1\ls L^3\|g\|_{L^2}^2\rr{L^{6/q}\|h\|_{L^p}^2}\rr{R^{2(3+\gamma)}}\ls L^{9+6/q+2\gamma}\|g\|_{L^2}^2\|h\|_{L^p}^2.
\eeq
\smallskip

For $\cI_2$, a similar argument yields
\beq
\ba
\cI_2&\ls  \sum_{k\in\Z^3}\rr{\sum_{l+m=k}|m|^{2s}\,|\hat{h}(m)|^{2}\,|\beta(l,m)|^{2}}\rr{\sum_{l\in\Z^3}|\hat{g}(l)|^2}=\rr{\sum_{l\in\Z^3}|\hat{g}(l)|^2}\sum_{m\in\Z^3}\rr{|m|^{2s}|\hat{h}(m)|^{2}\sum_{l\in\Z^3}\,|\beta(l,m)|^{2}}\\& \ls L^{3}\|g\|_{L^2}^2\sum_{m\in\Z^3}|m|^{2s}|\hat{h}(m)|^{2}\rr{R^{2(3+\gamma)}\br{m}^{-2}}\ls L^{9+6/q+2\gamma}\|g\|_{L^2}^2\|h\|_{L^p}^2.
\ea
\eeq

Therefore, for $Q^+_p$,
\beq
\|Q^+_p(g,h)\|_{\dot{H}^s}^2\ls L^{-9-2s}(\cI_1+\cI_2)\ls L^{-9-2s}\rr{L^{9+6/q+2\gamma}\|g\|_{L^2}^2\|h\|_{L^p}^2}=L^{2(3-\f3p-s+\gamma)} \|g\|_{L^p}^2\|h\|_{L^2}^2.
\eeq

By the symmetry of $Q^+_p$, the proof is complete.
\end{proof}
Taking $s=0$ and $p=\f{6}{5-2\varepsilon}$ for some $0<\varepsilon\le 1$, we obtain the following corollary for the $L^2$ norm of $Q_p^+$:
\begin{cor}
Let $g,h\in L^2(\cD_L)$. For $0<\varepsilon\le 1$ and $p=\f{6}{5-2\varepsilon}$,
\ben\label{eq:QpghL2}
\|Q_p^+(g,h)\|_{L^2}\ls L^{\f12+\gamma+\varepsilon} \|g\|_{L^2}\|h\|_{L^p},\ \ \ \|Q_p^+(g,h)\|_{L^2}\ls L^{\f12+\gamma+\varepsilon}\|g\|_{L^p}\|h\|_{L^2}.
\een
\end{cor}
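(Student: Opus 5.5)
This corollary is a direct specialization of Lemma \ref{thm:Qpghf}. I will take $s=0$ and $p=\f{6}{5-2\varepsilon}$ there and read off the exponent of $L$.

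\textbf{Admissibility of the parameters.} For $0<\varepsilon\le 1$ the exponent satisfies $5-2\varepsilon\in[3,5)$, so $p=\f{6}{5-2\varepsilon}\in(\f65,2]$. The condition $s<\f12$ holds trivially with $s=0$. The remaining condition $s+\f3p<\f52$ becomes
\[
\f3p=\f{5-2\varepsilon}{2}=\f52-\varepsilon<\f52,
\]
which holds precisely because $\varepsilon>0$. This is why the endpoint $\varepsilon=0$ (the case $p=6/5$) is excluded.

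\textbf{Computing the power of $L$.} The exponent in \eqref{eq:Qpgh} is
\[
3-\f3p-s+\gamma=3-\rr{\f52-\varepsilon}+\gamma=\f12+\gamma+\varepsilon .
\]
Since $\|\cdot\|_{\dot H^0_{per}}$ equals the $L^2(\cD_L)$ norm by \eqref{eq:fgfl2} and \eqref{eq:Hda}, both inequalities in \eqref{eq:Qpgh} reduce to \eqref{eq:QpghL2}. The first inequality has $g$ in $L^2$ and $h$ in $L^p$, and the second has the roles swapped, which follows from the symmetric version already stated in the lemma.

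\textbf{Main obstacle.} There is essentially no obstacle. The only point needing care is verifying the strict inequality $s+\f3p<\f52$. That inequality is what guarantees $(-2+2s)q'<-3$ in the proof of Lemma \ref{thm:Qpghf}, so that the lattice sum $\sum_m\br{m}^{(-2+2s)q'}$ converges. This is also why the loss $L^{\varepsilon}$ cannot be removed by this argument.
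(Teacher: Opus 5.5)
Your proposal is correct and follows the paper's own route: the paper obtains this corollary by taking $s=0$ and $p=\f{6}{5-2\varepsilon}$ in Lemma \ref{thm:Qpghf}. Your checks are accurate, namely the admissibility condition $\f3p=\f52-\varepsilon<\f52$ and the resulting exponent $3-\f3p+\gamma=\f12+\gamma+\varepsilon$.
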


Note that this result holds for general periodic functions $g,h\in L^2(\cD_L)$ and does not require compact support of $g,h$. In this periodic domain, the regular/singular change of variables $v\to v'$ and $v_* \to v'$ (Lemma \ref{thm:ch}) cannot be applied when estimating $Q^+$.
\medskip 

Using this result, we establish the following estimate for the $m_k$-weighted norm of the truncated periodic collision operator:
\begin{lem}\label{thm:Qpmk}
Let $g,h\in L^2(\cD_L)$ and $k\ge 8$. Then
\ben
	\|Q_p(g,h)\,m_k\|_{L^2}\ls\|g\, m_k\|_{L^2}\|h\,m_{k+1/2+\gamma+\varepsilon}\|_{L^2}+\|h\, m_k\|_{L^2}\|g\,m_{k+1/2+\gamma+\varepsilon}\|_{L^2},
	\een
where the constant depends only on $k$ and $\varepsilon$.
\end{lem}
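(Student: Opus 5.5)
The plan is to split $g$ and $h$ according to whether they live inside or outside the ball $B(0,R)$. On the outer region the weight $m_k$ is essentially maximal, so crude $L^\infty$ bounds on the weight cost nothing. On the inner region $Q_p$ coincides with the whole-space operator $Q$, so Theorem~\ref{thm:Qfgh} applies.

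Write $g=g_1+g_2$ and $h=h_1+h_2$ with $g_1=g\mathbf{1}_{B(0,R)}$, $g_2=g\mathbf{1}_{\cD_L\setminus B(0,R)}$, and similarly for $h$. By bilinearity,
\[
Q_p(g,h)=Q_p(g_1,h_1)+Q_p(g_2,h)+Q_p(g_1,h_2),
\]
and each piece is treated separately, for the gain part $Q_p^+$ and the loss part $Q_p^-$.

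\emph{Step 1: the pieces with an input outside $B(0,R)$.} Take the term $Q_p^+(g_2,h)$; the other off-diagonal terms are symmetric. Since $m_k\ls R^k\sim L^k$ on $\cD_L$, Corollary~\eqref{eq:QpghL2} with $p=\f{6}{5-2\varepsilon}$ gives
\[
\|Q_p^+(g_2,h)\,m_k\|_{L^2}\ls L^{k+\f12+\gamma+\varepsilon}\,\|g_2\|_{L^2}\,\|h\|_{L^p}.
\]
On the support of $g_2$ we have $m_{k+1/2+\gamma+\varepsilon}\ge R^{k+1/2+\gamma+\varepsilon}$, so $L^{k+\f12+\gamma+\varepsilon}\|g_2\|_{L^2}\ls\|g\,m_{k+1/2+\gamma+\varepsilon}\|_{L^2}$.

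For the $L^p$ norm, H\"older gives $\|h\|_{L^p}\le\|h\,m_1\|_{L^2}\,\|m_{-1}\|_{L^{r}(\cD_L)}$ with $r=\f{2p}{2-p}=\f{3}{1-\varepsilon}>3$. This $L^r$ norm is bounded uniformly in $L$ because $m_{-1}\sim\wei^{-1}$. Since $m_1\le m_k$, the term is bounded by $\|h\,m_k\|_{L^2}\|g\,m_{k+1/2+\gamma+\varepsilon}\|_{L^2}$, which is the second term of the claim.

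The loss part $Q_p^-$ is handled the same way. In Fourier variables it is a product of $h$ with a convolution of $g$ against $|z|^\gamma\mathbf{1}_{|z|\le 2R}$. It can therefore be bounded directly, using $\|\cdot\|_{L^\infty}$ on the convolution factor, or via the same $\beta(l,l)$ sum as in Lemma~\ref{thm:Qpghf}. This yields the same power $L^{\f12+\gamma+\varepsilon}$, or better.

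\emph{Step 2: the inner piece $Q_p(g_1,h_1)$.} Both inputs are supported in $B(0,R)$, so by the coincidence property recalled after \eqref{defQp}, $Q_p(g_1,h_1)=Q(g_1,h_1)$. This function is supported in $B(0,\sqrt2R)\subset\cD_L$ (by energy conservation, $|v|^2\le |v'|^2+|v'_*|^2$ for the gain term), and there $m_k\ls\wei^k$. Theorem~\ref{thm:Qfgh}(2) then gives
\[
\|Q(g_1,h_1)\|_{L^2_k}\ls \|h_1\|_{L^1_\gamma}\|g_1\|_{L^2_{k+\gamma}}+\|g_1\|_{L^1_\gamma}\|h_1\|_{L^2_{k+\gamma}}+\|g_1\|_{L^2_k}\|h_1\|_{L^2_k}.
\]
Cauchy--Schwarz gives $\|h_1\|_{L^1_\gamma}\ls\|h_1\|_{L^2_{\gamma+3/2+\varepsilon}}\le \|h\,m_k\|_{L^2}$, using $k\ge8$. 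Also $\wei^j=m_j$ on $B(0,R)$, so every weighted whole-space norm of $g_1,h_1$ is dominated by the corresponding $m$-weighted torus norm of $g,h$. This produces exactly the two terms on the right-hand side of the claim.

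\emph{Main obstacle.} The step needing the most care is Step 2: justifying the exact support and coincidence statement $Q_p(g_1,h_1)=Q(g_1,h_1)$ for functions that are merely in $L^2$ with support in $B(0,R)$, and checking that its support lies in the region where $m_k\sim\wei^k$. This is where the choice $L=\f{3+\sqrt2}{2}R$ is used. Step 1 is the other delicate point, since one must verify that the $L^p$ weight bound is uniform in $R$. If the coincidence argument is problematic, I would fall back on proving the inner estimate directly through the Fourier representation in Lemma~\ref{thm:betalm}, but I expect the support argument to suffice.
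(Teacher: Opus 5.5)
Your proposal is correct and follows the paper's proof essentially step for step. Both use the same splitting $Q_p(g_1,h_1)+Q_p(g_2,h)+Q_p(g_1,h_2)$, treated as follows:
\begin{itemize}
\item the inner piece uses the coincidence $Q_p=Q$ together with Theorem \ref{thm:Qfgh}(2);
\item the gain terms with an outer input use \eqref{eq:QpghL2} with the crude bound $m_k\lesssim R^k$;
\item the loss terms use a Young/$L^\infty$ bound on the convolution factor, which is a physical-space product, not a Fourier one.
\end{itemize}
The differences are cosmetic: your H\"older step uses $m_1$ with $r=3/(1-\varepsilon)$ where the paper uses $m_2$, and you state explicitly that $Q(g_1,h_1)$ is supported in $B(0,\sqrt2R)$, which the paper leaves implicit.
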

\begin{proof}
We split the functions $g,h$ by truncation: define $g_i,g_o\in L^2(\cD_L)$ by
\beq
g_i=g\,\mathbf{1}_{|v|< R},\ \ \ g_o=g\,\mathbf{1}_{|v|\ge  R}.
\eeq
The subscripts $i$ and $o$ denote the inner and outer parts of the function, respectively. Similarly, define $h_i,h_o\in L^2(\cD_L)$. Then
\beq
g=g_i+g_o,\ \ \ h=h_i+h_o.
\eeq
Thus, $Q_p(g,h)$ can be written as three parts:
\beq
Q_p(g,h)=Q_p(g_i,h_i)+Q_p(g_i,h_o)+Q_p(g_o,h).
\eeq
For the first part, since $g_i,h_i$ are both compactly supported on $B(0,R)$,
\beq
Q_p(g_i,h_i)=Q(g_i,h_i),
\eeq
where we do not distinguish between $g_i,h_i$ and their zero extensions on $\R^3$. By \eqref{eq:QfgL2},
\beq
\ba
\|Q_p(g_i,h_i)\,m_k\|_{L^2(\cD_L)}&\ls \|Q(g_i,h_i)\wei^{k}\|_{L^2(\R^3)}\ls \|g_i\|_{L^2_{k}}\|h_i\|_{L^2_{k+\gamma}}+\|g_i\|_{L^2_{k+\gamma}}\|h_i\|_{L^2_{k}}\\
&=\|g_i\,m_{k+\gamma}\|_{L^2(\cD_L)}\|h_i\,m_{k}\|_{L^2(\cD_L)}+\|g_i\,m_{k}\|_{L^2(\cD_L)}\|h_i\,m_{k+\gamma}\|_{L^2(\cD_L)}.
\ea
\eeq
For the other two parts, the property of $m_k$ gives $R^k\|g_o\|_{L^2}\ls \|g_o\,m_k\|_{L^2}$ and $R^k\|h_o\|_{L^2}\ls \|h_o\,m_k\|_{L^2}$. First, consider the $Q^+$ part. By \eqref{eq:QpghL2}, for any $0<\varepsilon\le 1$ and $p=\f{6}{5-2\varepsilon}$,
\beq
\ba
\|Q^+_p(g_i,h_o)\,m_k\|_{L^2}&\ls R^{k}\|Q^+_p(g_i,h_o)\|_{L^2}\ls R^{k+1/2+\gamma+\varepsilon}\|g_i\|_{L^p}\|h_o\|_{L^2}\\
&\ls \|g_i\|_{L^p}\|h_o\,m_{k+1/2+\gamma+\varepsilon}\|_{L^2}.
\ea
\eeq
Since $6/5<p\le 2$, we have $\|g_i\|_{L^p}\le \|g_i\, m_{2}\|_{L^2}\le \|g_i\, m_k\|_{L^2}$, and therefore
\beq
\|Q_p^+(g_i,h_o)\,m_k\|_{L^2}\ls \|g_i\, m_k\|_{L^2}\|h_o\,m_{k+1/2+\gamma+\varepsilon}\|_{L^2}.
\eeq
Similarly we have $\|Q^+_p(g_o,h)\,m_k\|_{L^2}\ls \|g_o\,m_{k+1/2+\gamma+\varepsilon}\|_{L^2}\|h\,m_k\|_{L^2}$. 

For the $Q^-$ part, by definition we know $Q^-(g,h)\, m_k$ is still a convolution:
\beq
Q^-(g,h)\, m_k(v)=\int_{\cD_L\times \S^2}|z|^\gamma \mathbf{1}_{|z|\le 2R}\,g(v+z)\,h(v)\,m_k(v)\,d\sigma\,dz.
\eeq
We then apply Young's convolution inequality to obtain
\beq
\|Q^-(g,h)\, m_k(v)\|_{L^2}\ls R^{\gamma}\|g\|_{L^1}\|h\,m_k\|_{L^2}.
\eeq
This inequality holds for general periodic functions $g,h\in L^2(\cD_L)$. Consequently, for the latter two terms we obtain
\beq
\|Q^-(g_i,h_o)\|_{L^2}\ls R^{\gamma}\|g_i\|_{L^1}\|h_o\,m_k\|_{L^2}\ls \|g_i\, m_2\|_{L^2}\|h_o\,m_{k+\gamma}\|_{L^2}\ls \|g_i\, m_k\|_{L^2}\|h_o\,m_{k+\gamma}\|_{L^2},
\eeq
\beq
\|Q^-(g_o,h)\|_{L^2}\ls R^{\gamma}\|g_o\|_{L^1}\|h\,m_k\|_{L^2}\ls R^\gamma\|g_o\, m_2\|_{L^2}\|h\,m_{k+\gamma}\|_{L^2}\ls \|g_o\, m_k\|_{L^2}\|h\,m_{k+\gamma}\|_{L^2},
\eeq
here we use that $k\ge 8>2+\gamma$.
By definition, $\|g\,m_k\|_{L^2}=\|g_i\,m_k\|_{L^2}+\|g_o\,m_k\|_{L^2}$, so summing the three parts yields
\beq
\ba
\|Q_p(g,h)\,m_k\|_{L^2}\ls& \|g_i\,m_{k+\gamma}\|_{L^2}\|h_i\,m_{k}\|_{L^2}+\|g_i\,m_{k}\|_{L^2}\|h_i\,m_{k+\gamma}\|_{L^2}\\+&\|g_i\, m_k\|_{L^2}\|h_o\,m_{k+1/2+\gamma+\varepsilon}\|_{L^2}+\|g_o\,m_{k+1/2+\gamma+\varepsilon}\|_{L^2}\|h\,m_k\|_{L^2}\\
+&\|g_i\, m_k\|_{L^2}\|h_o\,m_{k+\gamma}\|_{L^2}+\|g_o\, m_k\|_{L^2}\|h\,m_{k+\gamma}\|_{L^2}\\
\ls &\|g\, m_k\|_{L^2}\|h\,m_{k+1/2+\gamma+\varepsilon}\|_{L^2}+\|h\, m_k\|_{L^2}\|g\,m_{k+1/2+\gamma+\varepsilon}\|_{L^2}.
\ea
\eeq
This completes the proof.
\end{proof}

\section{Proof of Theorem \ref{thm:mainresult}}
\label{sec:3}
\setcounter{equation}{0}
\setcounter{figure}{0}
\setcounter{table}{0}

In this section, we prove the main stability estimate for the spectral method applied to the Boltzmann equation. The proof follows an energy method approach: we compare the numerical solution $f_N^R$ with the truncated exact solution $f^R=f\psi^R$, establish an energy inequality for the error function $g=f_N^R-f^R$, and control the resulting error terms through careful estimation of three main components $\cI_1$, $\cI_2$, and $\cI_3$.

\smallskip
We recall that the truncated periodic collision operator in Fourier mode $Q_N^R$ is defined by 
\ben\label{eq:QNR}
Q_N^R(g,h)\,:=\,\tpn^{\f12}\left(\tpn^{\f12}\left(Q_p\left(G_N,H_N \right)\right)\, \psi^R\right),
\een
with $G_N:=\tpn\left(g\,\psi^R\right)$ and $H_N:=\tpn\left(h\,\psi^R\right)$. This definition is nearly identical to that in our previous work, except that we use the smoothing projection operator $\tpn$ instead of the projection operator $\cP_N$. We note that the outermost operator is the standard projection operator $\cP_N$. Therefore, we construct the numerical solution $f_N^R$ via equation \eqref{eq:num}:
\ben
\begin{cases}
  \partial_t f_N^R\,=\,Q^R_N\left(f_N^R,\,f_N^R\right),
  \\[0.5em]
  f_N^R|_{t=0}\,=\,\cP_N\left(f_0\, \psi^R\,\mathbf{1}_{\mathcal{D}_L}\right).
\end{cases}
\een
We then compare the numerical solution $f_N^R$ with the truncated exact solution $f^R=f\psi^R$, which satisfies the equation
\ben
\begin{cases}
  \partial_t f^R\,=\,Q\left(f,\,f\right)\psi^R,
  \\[0.5em]
  f_N^R|_{t=0}\,=\,f_0\, \psi^R.
\end{cases}
\een
Here $f$ denotes the solution of the Boltzmann equation \eqref{1} given by Proposition \ref{thm:fHnl}.

Since $f^R$ is compactly supported on $B(0,R)\subset \cD_L$, we may regard $f^R$ as a function defined on the torus $\cD_L$ and compare $f_N^R$ with $f^R$ on this torus.

Now we are in a position to prove Theorem \ref{thm:mainresult}.

 \begin{proof}[Proof of Theorem \ref{thm:mainresult}: Part $(i)$]
Define the error function $g=f_N^R-f^R$ on $\cD_L$, then we have
\ben\label{eq:eqg}
\begin{cases}
  \partial_t g\,=\,Q^R_N\left(f_N^R,\,f_N^R\right)-Q\left(f,\,f\right) \psi^R,
  \\[0.5em]
  g|_{t=0}\,=\,(\cP_N-I)(f_0\, \psi^R\,\mathbf{1}_{\mathcal{D}_L}).
\end{cases}
\een

For some positive integer $k>8$, we apply the energy method to $g$ in the weighted space $L^2(\cD_L;m_k)$, where the weight function $m_k$ is defined by \eqref{eq:mk}. We obtain
\ben
\f{1}{2}\f{d}{dt}\|g\, m_k\|_{L^2(\cD_L)}^2=\cI_1+\cI_2+\cI_3,
\een
where we decompose the right-hand side into three terms:
\beq
\begin{cases}
\cI_1\,:=\br{Q^R_N\left(f_N^R\,,\,f_N^R\right)\,,\,g\,m_k^2}-\br{Q_p(\tpn(f_N^R)\,\psi^R\,,\,\tpn(f_N^R)\,\psi^R)\,,\,\tpn(g)\,m_k^2\,\psi^R},\\[0.5em]
\cI_2\,:=\br{Q_p(\tpn(f_N^R)\,\psi^R\,,\,\tpn(f_N^R)\,\psi^R)-Q_p(\tpn(f^R)\,\psi^R\,,\,\tpn(f^R)\,\psi^R)\,,\,\tpn(g)\,m_k^2\,\psi^R},\\[0.5em]
\cI_3\,:=\br{Q_p(\tpn(f^R)\,\psi^R\,,\,\tpn(f^R)\,\psi^R)\,,\,\tpn(g)\,m_k^2\,\psi^R}-\br{Q(f,f)\,\psi^R\,,\,g\,m_k^2}.
\end{cases}
\eeq

\underline{Estimate for $\cI_1$.} For simplicity, let $F_N:=\tpn(f_N^R\,\psi^R)$. We then split $\cI_1$ as $\cI_1=\cI_{11} +\cI_{12} +\cI_{13} + \cI_{14}$, where
\beq
\begin{cases}
\cI_{11}\,:=\br{Q^R_N\left(f_N^R,\,f_N^R\right)\,,\, g\,m_k^2}-\br{\tpn\rr{Q_p(F_N,F_N)\,\psi^R\,m_k},\,g\,m_k}\\[0.5em]
\cI_{12}\,:=\br{Q_p(F_N,F_N)\,\psi^R\,m_k,\,\tpn(g\,m_k)}-\br{Q_p(F_N,F_N)\,\psi^R\,,\,\tpn(g)\,m_k^2},\\[0.5em]
\cI_{13}\,:=\br{Q_p(F_N,F_N)\,\psi^R-Q_p(\tpn(f_N^R)\,\psi^R,F_N)\,\psi^R\,\,,\,\tpn(g)\,m_k^2},\\[0.5em]
\cI_{14}\,:=\br{Q_p(\tpn(f_N^R)\,\psi^R,F_N)\,\psi^R-Q_p(\tpn(f_N^R)\,\psi^R,\tpn(f_N^R)\,\psi^R)\,\psi^R\,,\,\tpn(g)\,m_k^2}.
\end{cases}
\eeq

For $\cI_{11}$, using the definition of $Q_N^R$ in \eqref{eq:QNR}, we can write
\beq
\cI_{11}=\br{\tpn^{\f12}\left(\tpn^{\f12}\left(Q_p\left(F_N,F_N \right)\right)\, \psi^R\right)m_k-\tpn\rr{Q_p(F_N,F_N)\,\psi^R\,m_k},\,g\,m_k}
\eeq
considering the first argument of the inner product, we decompose it as
\beq
\left\|\tpn^{\f12}\left(\tpn^{\f12}\left(Q_p\left(F_N,F_N \right)\right)\, \psi^R\right)m_k-\tpn\rr{Q_p(F_N,F_N)\,\psi^R\,m_k}\right\|_{L^2}\le \cJ_1+\cJ_2,
\eeq
with
\ben\label{eq:J1}
\begin{cases}
\cJ_1:=\left\|\tpn^{\f12}\left(\tpn^{\f12}\left(Q_p\left(F_N,F_N \right)\right)\, \psi^R\right)m_k-\tpn^{\f12}\left(\tpn^{\f12}\left(Q_p\left(F_N,F_N \right)\right)\, \psi^R\,m_k\right)\right\|_{L^2},\\[0.5em]
\cJ_2:=\left\|\tpn^{\f12}\left(\tpn^{\f12}\left(Q_p\left(F_N,F_N \right)\right)\, \psi^R\,m_k\right)-\tpn\rr{Q_p(F_N,F_N)\,\psi^R\,m_k}\right\|_{L^2},
\end{cases}
\een
For $\cJ_1$, noting that $\tpn^{\f12}$ is also a smoothing operator, we apply Lemma \ref{thm:comm} (2) to obtain
\beq
\cJ_1\ls \f{L}{N}\| \tpn^{\f12}\left(Q_p\left(F_N,F_N \right)\right)\, \psi^R\,m_{k-1}\|_{L^2}.
\eeq
Since we assume $N>L$, we may apply Corollary \ref{thm:gnmk} (1) to obtain
\beq
\cJ_1\ls \f{L}{N}\| \tpn^{\f12}\left(Q_p\left(F_N,F_N \right)\right)\,m_{k-1}\|_{L^2}\ls \f{L}{N}\|Q_p\left(F_N,F_N \right)m_{k-1}\|_{L^2}.
\eeq
Applying Lemma \ref{thm:Qpmk} yields
\beq
\cJ_1\ls \f{L}{N}\|F_N\,m_{k}\|_{L^2}\|F_N\,m_{k+\gamma/2}\|_{L^2}.
\eeq

For $\cJ_2$, since $\tpn^{\f12}\circ\tpn^{\f12}=\tpn$, Corollary \ref{thm:gnmk} (2) gives 
have
\beq
\ba
\cJ_2&=\left\|\tpn^{\f12}\left(\tpn^{\f12}\left(Q_p\left(F_N,F_N \right)\right)\, \psi^R\,m_k-\tpn^{\f12}\rr{Q_p(F_N,F_N)\,\psi^R\,m_k}\right)\right\|_{L^2}
\\ &\le \left\|\tpn^{\f12}\left(Q_p\left(F_N,F_N \right)\right)\, \psi^R\,m_k-\tpn^{\f12}\rr{Q_p(F_N,F_N)\,\psi^R\,m_k}\right\|_{L^2}
\\ &\ls \f{L}{N}\|Q_p(F_N,F_N)\,m_{k-1}\|_{L^2}.
\ea
\eeq
Applying Lemma \ref{thm:Qpmk} again, for any $\varepsilon>0$, we obtain
\beq
\cJ_2\ls \f{L}{N}\|F_N\,m_{k}\|_{L^2}\|F_N\,m_{k-1/2+\gamma+\varepsilon}\|_{L^2}.
\eeq
Combining the estimates for $\cJ_1$ and $\cJ_2$ yields
\beq
\cI_{11}\ls \f{L}{N}\|F_N\,m_{k}\|_{L^2}\|F_N\,m_{k-1/2+\gamma+\varepsilon}\|_{L^2}\|g\,m_k\|_{L^2}\ls \f{R^{1+\varepsilon}}{N}\|F_N\,m_{k}\|_{L^2}\|F_N\,m_{k+\gamma/2}\|_{L^2}\|g\,m_k\|_{L^2}.
\eeq

\smallskip

For $\cI_{12}$, we write
\beq
\cI_{12}=\br{Q_p(F_N,F_N)\,\psi^R\,m_{k-1},\tpn(g\,m_k)m_1-\tpn(g)\,m_{k+1}}.
\eeq
For the first argument, applying Lemma \ref{thm:Qpmk} as before yields
\beq
\|Q_p(F_N,F_N)\,\psi^R\,m_{k-1}\|_{L^2}\ls \|F_N\,m_{k}\|_{L^2}\|F_N\,m_{k-1/2+\gamma+\varepsilon}\|_{L^2}.
\eeq
For the second argument, applying Lemma \ref{thm:comm} (2) twice gives
\beq
\ba
&\|\tpn(g\,m_k)m_1-\tpn(g)\,m_{k+1}\|_{L^2}
\\ &\le \|\tpn(g\,m_k)\,m_1-\tpn(g\,m_{k+1})\|_{L^2}+\|\tpn(g\,m_{k+1})-\tpn(g)\,m_{k+1}\|_{L^2}
\\ &\ls \f{L}{N}\|g\, m_k\|_{L^2}.
\ea
\eeq
This yields the estimate for $\cI_{12}$:
\beq
\cI_{12}\ls \f{L}{N}\|F_N\,m_{k}\|_{L^2}\|F_N\,m_{k+\gamma/2}\|_{L^2}\|g\,m_k\|_{L^2}.
\eeq

For $\cI_{13}$ and $\cI_{14}$, we first estimate the weighted difference between $F_N=\tpn(f_N^R\,\psi^R)$ and $\tpn(f_N^R)\,\psi^R$:
\beq
\left\|\rr{F_N-\tpn(f_N^R)\,\psi^R}\,m_k\right\|_{L^2}\le \|F_N\,m_k-\tpn(f_N^R\,\psi^R\,m_k)\|_{L^2}+\|\tpn(f_N^R\,\psi^R\,m_k)-\tpn(f_N^R)\,\psi^R\,m_k\|_{L^2}.
\eeq
Applying Lemma \ref{thm:comm} (2) to the first term yields
\beq
\|F_N\,m_k-\tpn(f_N^R\,\psi^R\,m_k)\|_{L^2}\ls \f{L}{N}\|f_N^R\,\psi^R\,m_{k-1}\|_{L^2}\le  \f{L}{N}\|f_N^R\,m_{k-1}\|_{L^2},
\eeq
and using Corollary \ref{thm:gnmk} for the second term gives
\beq
\|\tpn(f_N^R\,\psi^R\,m_k)-\tpn(f_N^R)\,\psi^R\,m_k\|_{L^2}\ls \f{L}{N}\|f_N^R\,m_{k-1}\|_{L^2}.
\eeq
Hence
\ben\label{eq:fnrmk}
\left\|\rr{F_N-\tpn(f_N^R)\,\psi^R}\,m_k\right\|_{L^2}\ls\f{L}{N}\|f_N^R\,m_{k-1}\|_{L^2}.
\een
From this result, similarly to the proof of Corollary \ref{thm:gnmk} (1), when $L<N$ we also have
\ben\label{eq:Fnmk}
\|F_N\, m_k\|_{L^2}\ls \|f_N^R\,m_k\|_{L^2}
\een
Returning to $\cI_{13}$ and $\cI_{14}$, Lemma \ref{thm:Qpmk} implies
\beq
\ba
&\cI_{13}=\br{Q_p(F_N-\tpn(f_N^R)\,\psi^R,F_N)\,m_{k-\gamma/2}\,,\,\tpn(g)\,\psi^R\,m_{k+\gamma/2}}\\& \ls \|Q_p(F_N-\tpn(f_N^R)\,\psi^R,F_N)\,m_{k-\gamma/2}\|_{L^2}\|\tpn(g)\,\psi^R\,m_{k+\gamma/2}\|_{L^2}.
\ea
\eeq
Applying Lemma \ref{thm:Qpmk} again to $Q_p$ and using \eqref{eq:fnrmk}, we obtain 
\beq
\ba
&\|Q_p(F_N-\tpn(f_N^R)\,\psi^R,F_N)\,m_{k-\gamma/2}\|_{L^2}\\ &\ls \left\|\rr{F_N-\tpn(f_N^R)\,\psi^R}\,m_{k-\gamma/2}\right\|_{L^2}\|F_N\,m_{k+1/2+\gamma/2+\varepsilon}\|_{L^2}
+\left\|\rr{F_N-\tpn(f_N^R)\,\psi^R}\,m_{k+1/2+\gamma/2+\varepsilon}\right\|_{L^2}\|F_N\,m_{k-\gamma/2}\|_{L^2}
\\ &\ls \f{L}{N}\|f_N^R\,m_{k-\gamma/2-1}\|_{L^2}\|F_N\,m_{k+1/2+\gamma/2+\varepsilon}\|_{L^2}+\f{L}{N}\|f_N^R\,m_{k-1/2+\gamma/2+\varepsilon}\|_{L^2}\|F_N\,m_{k-\gamma/2}\|_{L^2}
\\ &\ls \f{L}{N}R^{1/2+\varepsilon}\|f_N^R\,m_k\|_{L^2}\|F_N\,m_{k+\gamma/2}\|_{L^2}\ls \f{R^{\f32+\varepsilon}}{N}\|f_N^R\,m_k\|_{L^2}\|F_N\,m_{k+\gamma/2}\|_{L^2}.
\ea
\eeq
Here we use \eqref{eq:Fnmk}. Thus we obtain the bound for $\cI_{13}$:
\beq
\cI_{13}\ls \f{R^{\f32+\varepsilon}}{N}\|f_N^R\,m_k\|_{L^2}\|F_N\,m_{k+\gamma/2}\|_{L^2}\|\tpn(g)\,\psi^R\,m_{k+\gamma/2}\|_{L^2}.
\eeq
By the same reasoning, we obtain the same bound for $\cI_{14}$:
\beq
\cI_{14}\ls  \f{R^{\f32+\varepsilon}}{N}\|f_N^R\,m_k\|_{L^2}\|F_N\,m_{k+\gamma/2}\|_{L^2}\|\tpn(g)\,\psi^R\,m_{k+\gamma/2}\|_{L^2}.
\eeq

Combining these terms and using \eqref{eq:Fnmk}, we obtain the estimate for $\cI_1$:
\beq
\ba
&\cI_1=\cI_{11}+\cI_{12}+\cI_{13}+\cI_{14}\\ &\ls \f{R^{1+\varepsilon}}{N}\|f_N^R\,m_{k}\|_{L^2}\|F_N\,m_{k+\gamma/2}\|_{L^2}\|g\,m_k\|_{L^2}+\f{R^{\f32+\varepsilon}}{N}\|f_N^R\,m_k\|_{L^2}\|F_N\,m_{k+\gamma/2}\|_{L^2}\|\tpn(g)\,\psi^R\,m_{k+\gamma/2}\|_{L^2}
\\ &\ls \f{R^{\f32+\varepsilon}}{N}\|f_N^R\,m_k\|_{L^2}\|F_N\,m_{k+\gamma/2}\|_{L^2}\rr{\|g\,m_k\|_{L^2}+\|\tpn(g)\,\psi^R\,m_{k+\gamma/2}\|_{L^2}}.
\ea
\eeq
We introduce the notation $g_N=\tpn(g)$. Since $\psi^R$ is compactly supported in $B(0,R)$, we may write $\|\tpn(g)\,\psi^R\,m_{k+\gamma/2}\|_{L^2}=\|g_N\,\psi^R\|_{L^2_{k+\gamma/2}(\R^3)}$.

Furthermore, since $F_N=\tpn(f_N^R\,\psi^R)$ and $f_N^R=f^R+g$, using \eqref{eq:fnrmk} and $L\le N$ we obtain
\beq
\ba
\|F_N\,m_{k+\gamma/2}\|_{L^2}\ls \|\tpn(f_N^R)\,\psi^R\,m_{k+\gamma/2}\|_{L^2}+\f{L}{N}\|f_N^R\,m_{k+\gamma/2-1}\|_{L^2}\\ \ls \|\tpn(f^R)\,\psi^R\,m_{k+\gamma/2}\|_{L^2}+\|\tpn(g)\,\psi^R\,m_{k+\gamma/2}\|_{L^2}+\|f_N^R\,m_{k}\|_{L^2}.
\ea
\eeq
Using $\psi^R\le 1$ and Corollary \ref{thm:gnmk} (1), we have 
\beq
\|\tpn(f^R)\,\psi^R\,m_{k+\gamma/2}\|_{L^2}\le \|\tpn(f^R)\,m_{k+\gamma/2}\|_{L^2}\ls \|f^R\,m_{k+\gamma/2}\|_{L^2}=\|f\,\psi^R\|_{L^2_{k+\gamma/2}(\R^3)}\le \|f\|_{L^2_{k+1}(\R^3)}.
\eeq
Similarly, we have $\|f_N^R\,m_{k}\|_{L^2}\le \|f\|_{L^2_k(\R^3)}+\|g\,m_k\|_{L^2}$. Combining these results, we obtain the estimate for $\cI_1$:
\ben\label{eq:I1}
\ba
&\cI_1\ls \f{R^{\f32+\varepsilon}}{N}\rr{\|f\|_{L^2_k}+\|g\,m_k\|_{L^2}}\rr{\|f\|_{L^2_{k+1}}+\|g_N\,\psi^R\|_{L^2_{k+\gamma/2}}+\|g\,m_k\|_{L^2}}\rr{\|g_N\,\psi^R\|_{L^2_{k+\gamma/2}}+\|g\,m_k\|_{L^2}}\\
&\ls \f{R^{\f32+\varepsilon}}{N}\|f\|_{L^2_{k+1}}^2\rr{\|g_N\,\psi^R\|_{L^2_{k+\gamma/2}}+\|g\,m_k\|_{L^2}}+\f{R^{\f32+\varepsilon}}{N}\rr{\|f\|_{L^2_{k+1}}+\|g\,m_k\|_{L^2}}\rr{\|g_N\,\psi^R\|_{L^2_{k+\gamma/2}}^2+\|g\,m_k\|_{L^2}^2}.
\ea
\een

\smallskip
\underline{Estimate for $\cI_2$.} Since $\psi^R$ is compactly supported in $B(0,R)$, we have $Q_p(\tpn(f_N^R)\psi^R,\tpn(f_N^R)\psi^R)=Q(\tpn(f_N^R)\psi^R,\tpn(f_N^R)\psi^R)$ on $[-L,L]^3\subset\R^3$. (Here we regard $\tpn(f_N^R)$ as its zero extension to $\R^3$, so $\|\tpn(f_N^R)\|_{L^2(\cD_L)}=\|\tpn(f_N^R)\|_{L^2(\R^3)}$, although $\tpn(f_N^R)$ is not continuous.) Therefore, with $g=f_N^R-f^R$, we split $\cI_2$ as
\beq
\ba
\cI_2&=\br{Q(\tpn(f_N^R)\,\psi^R,\tpn(f_N^R)\,\psi^R)-Q(\tpn(f^R)\,\psi^R,\tpn(f^R)\,\psi^R)\,,\,\tpn(g)\,m_k^2\,\psi^R}=\cI_{21}+\cI_{22}+\cI_{23},
\ea
\eeq
where, introducing the notation $g_N=\tpn(g)$,
\beq
\begin{cases}
\cI_{21}\,:=\br{Q(f^R\,\psi^R,g_N\,\psi^R)\,,\,g_N\,\psi^R\,\wei^{2k}},\\[0.5em]
\cI_{22}\,:=\br{Q((\tpn-I)(f^R)\,\psi^R,g_N\,\psi^R)\,,\,g_N\,\psi^R\,\wei^{2k}},\\[0.5em]
\cI_{23}\,:=\br{Q(g_N\,\psi^R,\tpn(f_N^R)\,\psi^R)\,,\,g_N\,\psi^R\,\wei^{2k}},
\end{cases}
\eeq
since $m_k^2=\wei^{2k}$ on $B(0,R)$. 

\smallskip
For $\cI_{21}$, applying Theorem \ref{thm:Qfgg} and noting that $f^R$ is non-negative, we obtain
\beq
\ba
&\br{Q(f^R\,\psi^R,g_N\,\psi^R)\,,\,g_N\,\psi^R\,\wei^{2k}}+K\|g_N\,\psi^R\|_{L^2_{k+\gamma/2}}^2\\&\le \f{C_1}{\sqrt{k}}\|g_N\,\psi^R\|_{L^1_{\gamma}}\|f^R\,\psi^R\|_{L^2_{k+\gamma/2}}\|g_N\,\psi^R\|_{L^2_{k+\gamma/2}}+C_k\|f^R\,\psi^R\|_{L^2_k}\|g_N\,\psi^R\|_{L^2_{k}}^2,
\ea
\eeq
where the constant $K$ depends only on a lower bound of $\|f^R\psi^R\|_{L^1}$ and an upper bound of $\|f^R\psi^R\|_{L^1_2} +\|f^R\psi^R\|_{L\log L}$. Since $f^R=f\,\psi^R$ with $f$ being the solution to the Boltzmann equation satisfying $\|f\|_{L^1}=1,\|f\|_{L^1_2}=3,\|f\|_{L\log L}\le \mathsf{H}$, for $R>3$ we have
\ben
\ba
&\|f^R\psi^R\|_{L^1}=\|f\|_{L^1}-\|f\,(1-(\psi^R)^2)\|_{L^1}>\f{1}{2},\\
&\|f^R\psi^R\|_{L^1_2}\le \|f\|_{L^1_2}=3, \ \|f^R\psi^R\|_{L\log L}\le \|f\|_{L\log L}\le H.
\ea
\een
Hence, the constant $K$ depends only on $\mathsf{H}$.

\smallskip
For $\cI_{22}$, we return to the torus and use an estimation similar to that for $\cI_{13}$. Applying Lemma \ref{thm:Qpmk} yields
\beq
\ba
\cI_{22}&=\br{Q_p((\tpn-I)(f^R)\,\psi^R,g_N\,\psi^R)\,m_{k-\gamma/2},\,g_N\,\psi^R\,m_{k+\gamma/2}}\\
&\ls \|(\tpn-I)(f^R)\,\psi^R\,m_{k+1/2+\gamma/2+\varepsilon}\|_{L^2}\|g_N\,\psi^R\,m_{k-\gamma/2}\|_{L^2}\|g_N\,\psi^R\,m_{k+\gamma/2}\|_{L^2}\\ &+\|(\tpn-I)(f^R)\,\psi^R\,m_{k-\gamma/2}\|_{L^2}\|g_N\,\psi^R\,m_{k+1/2+\gamma/2+\varepsilon}\|_{L^2}\|g_N\,\psi^R\,m_{k+\gamma/2}\|_{L^2}.
\ea
\eeq
To estimate the projection error $\|(\tpn-I)(f^R)\, \psi^R\, m_k\|_{L^2}$ for general $k>0$, we decompose it as follows:
\beq
\|(\tpn-I)(f^R)\, \psi^R\, m_k\|_{L^2}\le \|(\tpn-I)(f^R)\, m_k\|_{L^2}\le \|(\tpn-I)(f^R\,m_k)\|_{L^2}+\|(\tpn f^R)\,m_k-\tpn (f^R\,m_k)\|_{L^2}.
\eeq
For the first term, since $f^R=f\,\psi^R\in H^1(\cD_L)$, the weighted function $f^R\,m_k$ inherits the same regularity. Therefore, by the spectral approximation property, $\|(\tpn-I)(f^R\,m_k)\|_{L^2}\ls \f{R}{N}\|f^R\, m_k\|_{H^1(\cD_L)}$. As $f^R=f\,\psi^R$ is supported in $B(0,R)$, we have $\|f^R\, m_k\|_{H^1(\cD_L)}=\|f^R\, m_k\|_{H^1(\R^3)}\ls \|f\|_{H^1_k(\R^3)}$. For the second term, Lemma \ref{thm:comm} (2) gives
\beq
\|(\tpn f^R)\,m_k-\tpn (f^R\,m_k)\|_{L^2}\ls \f{R}{N}\|f^R\,m_{k-1}\|_{L^2}\le \f{R}{N}\|f\|_{L^2_{k-1}(\R^3)}.
\eeq
Combining these estimates yields
\ben\label{eq:tpnIfR}
\|(\tpn-I)(f^R)\, m_k\|_{L^2}\ls \f{R}{N}\|f\|_{H^1_k(\R^3)}.
\een
Substituting this bound into the estimate for $\cI_{22}$ gives
\beq
\ba
\cI_{22}&\ls \f{R}{N}\|f\|_{H^1_{k+1/2+\gamma/2+\varepsilon}(\R^3)}\|g_N\,\psi^R\,m_{k+1/2+\gamma/2+\varepsilon}\|_{L^2}\|g_N\,\psi^R\,m_{k+\gamma/2}\|_{L^2}
\\ &\ls \f{R^{\f32+\varepsilon}}{N}\|f\|_{H^1_{k+2}(\R^3)}\|g_N\,\psi^R\,m_{k+\gamma/2}\|_{L^2}^2.
\ea
\eeq

\smallskip
The term $\cI_{23}$ involves the bilinear collision operator with the numerical solution $g_N$. Applying Theorem \ref{thm:Qfgh} yields
\beq
\ba
&\br{Q(g_N\,\psi^R,\tpn(f_N^R)\,\psi^R)\,,\,g\,\psi^R\,\wei^{2k}}\\ &\le \f{C_1}{\sqrt{k}}\|\tpn(f_N^R)\psi^R\|_{L^1_\gamma}\|g_N\psi^R\|_{L^2_{k+\gamma/2}}^2+C_2\|g_N\psi^R\|_{L^1_{\gamma}}\|\tpn(f_N^R)\psi^R\|_{L^2_{k+\gamma/2}}\|g_N\psi^R\|_{L^2_{k+\gamma/2}}\\ &+C_k\|\tpn(f_N^R)\psi^R\|_{L^2_k}\|g_N\psi^R\|_{L^2_{k}}^2,
\ea
\eeq
where we use the fact that $g=f_N^R-f^R$ and hence $\tpn(f_N^R)=\tpn(f^R)+g_N$. 

\smallskip
We now combine the three terms $\cI_{21}$, $\cI_{22}$, and $\cI_{23}$. For $k\ge 8$, the embedding $L^2_k\subset L^1_\gamma$ holds. Since $g\psi^R$ is compactly supported in $B(0,R)$, we have
\beq
\|g_N\psi^R\|_{L^2_k(\R^3)}\ls \|g_N\,\psi^R\,m_k\|_{L^2(\R^3)}= \|g_N\,\psi^R\,m_k\|_{L^2(\cD_L)}\le \|g_N\,m_k\|_{L^2(\cD_L)}.
\eeq
Applying \eqref{eq:gnmk} gives $\|g_N\psi^R\|_{L^2_k(\R^3)}\ls \|g\,m_k\|_{L^2(\cD_L)}$ and $\|\tpn(f^R)\psi^R\|_{L^2_k}\ls \|f^R\,m_k\|_{L^2(\cD_L)}= \|f\,\psi^R\,\wei^k\|_{L^2(\R^3)}\le \|f\|_{L^2_k}$. Therefore,
\beq
\ba
\cI_2&+K\|g_N\,\psi^R\|_{L^2_{k+\gamma/2}}^2\le \f{C_1}{\sqrt{k}}\|f\|_{L^1_\gamma}\|g_N\,\psi^R\|_{L^2_{k+\gamma/2}}^2+\rr{\f{C_1}{\sqrt{k}}+C_2}\|g\,m_k\|_{L^2}\|g_N\psi^R\|_{L^2_{k+\gamma/2}}^2\\
&+\rr{\f{C_1}{\sqrt{k}}+C_2}\|g\,m_k\|_{L^2}\|f\|_{L^2_{k+\gamma/2}}\|g_N\,\psi^R\|_{L^2_{k+\gamma/2}}+C_k\|g\,m_k\|_{L^2}^3\\ &+C_k\|f\|_{L^2_k}\|g\,m_k\|_{L^2}^2+C_3\f{R^{\f32+\varepsilon}}{N}\|f\|_{H^1_{k+2}(\R^3)}\|g_N\,\psi^R\,m_{k+\gamma/2}\|_{L^2}^2.
\ea
\eeq
Since $\|g\,m_k\|_{L^2}\|f\|_{L^2_{k+\gamma/2}}\|g_N\,\psi^R\|_{L^2_{k+\gamma/2}}\le \epsilon\|g_N\,\psi^R\|_{L^2_{k+\gamma/2}}^2+C_\epsilon\|g\,m_k\|_{L^2}^2\|f\|_{L^2_{k+\gamma/2}}^2$ for some sufficiently small positive $\epsilon$ to be chosen, we obtain
\beq
\ba
\cI_2&+\rr{K-\f{C_1}{\sqrt{k}}\|f\|_{L^1_\gamma}-(C_1+C_2)\|g\,m_k\|_{L^2}-\epsilon-C_3\f{R^{\f32+\varepsilon}}{N}\|f\|_{H^1_{k+2}(\R^3)}}\|g_N\,\psi^R\|_{L^2_{k+\gamma/2}}^2\\ &\le C_k\rr{1+\|f\|_{L^2_{k+\gamma/2}}^2}\|g\,m_k\|_{L^2}^2+C_k\|g\,m_k\|_{L^2}^3.
\ea
\eeq
Here we also use that for $k\ge 8$, $\f{C_1}{\sqrt{k}}+C_2\le C_1+C_2$. We denote $C_0=C_1+C_2$, an absolute constant. 
Since $\|f\|_{L^1_\gamma}\le \|f\|_{L^1_2}=3$, we choose $k$ sufficiently large such that $\f{C_1}{\sqrt{k}}\|f\|_{L^1_\gamma}\le K/4$, and select $\epsilon<K/4$ sufficiently small. This yields
\ben\label{eq:I2}
\ba
\cI_2&+\rr{\f{K}{2}-C_0\|g\,m_k\|_{L^2}-C_3\f{R^{\f32+\varepsilon}}{N}\|f\|_{H^1_{k+2}(\R^3)}}\|g_N\,\psi^R\|_{L^2_{k+\gamma/2}}^2\\ &\le C_k\rr{1+\|f\|_{L^2_{k+\gamma/2}}^2}\|g\,m_k\|_{L^2}^2+C_k\|g\,m_k\|_{L^2}^3.
\ea
\een
Since the constant $K$ depends only on $\mathsf{H}$, the choice of $k$ also depends only on $\mathsf{H}$.
\smallskip

\underline{Estimate for $\cI_3$.} We denote $F'_N=\tpn(f^R)\, \psi^R$ and split $\cI_3$ as $\cI_3=\cI_{31}+\cI_{32}+\cI_{33}$, where 
\beq
\begin{cases}
\cI_{31}\,:=\br{Q_p(F'_N,F'_N)\,m_k,\, g_N\,m_{k}\,\psi^R-\tpn(g\,m_k\, \psi^R)\,},\\[0.5em]
\cI_{32}\,:=\br{\tpn\rr{Q_p(F'_N,F'_N)\,m_k}-Q_p(F'_N,F'_N)\,m_k\,,\,g\,m_k\,\psi^R},\\[0.5em]
\cI_{33}\,:=\br{Q_p(F'_N,F'_N)\,,\,g\,m_k^2\,\psi^R}-\br{Q(f,f)\,\psi^R\,,\,g\,m_k^2}.\\[0.5em]
\end{cases}
\eeq

For $\cI_{31}$, applying Corollary \ref{thm:gnmk} (2) gives
\beq
\|g_N\,m_{k}\,\psi^R-\tpn(g\,m_k\, \psi^R)\|_{L^2}\ls \f{L}{N}\|g\,m_{k-1}\|_{L^2}.
\eeq
Using Lemma \ref{thm:Qpmk} and noting that $1/2+\gamma+\varepsilon<2$ since $0<\varepsilon<\f12$, we obtain
\beq
\cI_{31}\le \f{L}{N}\|Q_p(F'_N,F'_N)\,m_k\|_{L^2}\|g\,m_{k-1}\|_{L^2}\ls \f{L}{N}\|F'_N\,m_{k+2}\|_{L^2}^2\|g\,m_k\|_{L^2}.
\eeq
Since $\psi^R\le 1$ and $F'_N=\tpn(f^R)\, \psi^R$, Corollary \ref{thm:gnmk} (1) yields
\beq
\|F'_N\,m_{k+2}\|_{L^2}\le \|\tpn(f^R)\,m_{k+2}\|_{L^2}\ls \|f^R\,m_{k+2}\|_{L^2}\le \|f\|_{L^2_{k+2}(\R^3)}.
\eeq
Consequently, the estimate for $\cI_{31}$ is
\beq
\cI_{31}\ls \f{L}{N}\|f\|_{L^2_{k+2}(\R^3)}^2\|g\,m_k\|_{L^2}.
\eeq

\smallskip
For $\cI_{32}$, we use a technique similar to that for $\cI_{22}$ to handle $(I-\tpn)$. First, we have
\beq
\ba
\cI_{32}\le \|(I-\tpn) \rr{Q_p(F'_N,F'_N)\,m_k}\|_{L^2}\|g\,m_k\,\psi^R\|_{L^2}.
\ea
\eeq
Applying \eqref{eq:1-P} yields
\beq
\|(I-\tpn) \rr{Q_p(F'_N,F'_N)\,m_k}\|_{L^2}\ls \f{R}{N}\|Q_p(F'_N,F'_N)\,m_k\|_{\dot{H}^1}\ls \f{R}{N}\|Q_p(F'_N,F'_N)\,m_k\|_{H^1}.
\eeq
Since $F_N'$ is compactly supported in $B(0,R)$, the periodic collision operator coincides with the original one on $[-L,L]^3\subset\R^3$, i.e., $Q_p(F'_N,F'_N)=Q(F'_N,F'_N)$. Therefore,
\beq
\|Q_p(F'_N,F'_N)\,m_k\|_{H^1}=\|Q(F_N',F_N')\|_{H^1_k(\R^3)}\ls \|F_N'\|_{H^1_{k+2}(\R^3)}^2\ls \|f\|_{H^1_{k+2}(\R^3)}.
\eeq
Consequently, we obtain the estimate
\beq
\cI_{32}\ls \f{R}{N}\|f\|_{H^1_{k+2}(\R^3)}^2\|g\,m_k\|_{L^2}.
\eeq

\smallskip
For $\cI_{33}$, we rewrite it as an inner product on $\R^3$:
\beq
\cI_{33}=\br{\rr{Q(F'_N,F'_N)-Q(f,f)}\,\wei^k\,,\,g\,\psi^R\,\wei^{k}}\le \|Q(F'_N,F'_N)-Q(f,f)\|_{L^2_k(\R^3)}\|g\,m_k\|_{L^2}.
\eeq
Since $Q$ is a bilinear operator, applying Theorem \ref{thm:Qfgh} (2) yields
\beq
\ba
&\|Q(F'_N,F'_N)-Q(f,f)\|_{L^2_k(\R^3)}\le \|Q(F'_N-f,F'_N)\|_{L^2_k}+\|Q(f,f-F'_N)\|_{L^2_k}\\ 
&\ls \|F'_N-f\|_{L^2_{k+\gamma}}(\|f\|_{L^2_{k+\gamma}}+\|F'_N\|_{L^2_{k+\gamma}}).
\ea
\eeq
By \eqref{eq:gnmk}, we have
\beq
\|F'_N\|_{L^2_{k+\gamma}(\R^3)}=\|\tpn(f^R)\,\psi^R\,m_{k+\gamma}\|_{L^2(\cD_L)}\ls \|f^R\,m_{k+\gamma}\|_{L^2(\cD_L)}=\|f^R\|_{L^2_{k+\gamma}(\R^3)}\le \|f\|_{L^2_{k+\gamma}(\R^3)}.
\eeq
To estimate $F'_N-f$, we decompose the difference as
\beq
\ba
&\|F'_N-f\|_{L^2_{k+\gamma}(\R^3)}\le\|(F'_N-f^R\,\psi^R)m_{k+\gamma}\|_{L^2(\cD_L)}+\|f^R\psi^R-f\|_{L^2_{k+\gamma}(\R^3)}
\\ &=\|(\tpn-I)(f^R)\,\psi^R\,m_{k+\gamma}\|_{L^2(\cD_L)}+\|f\,((\psi^R)^2-1)\|_{L^2_{k+\gamma}(\R^3)}.
\ea
\eeq
Since $|(\psi^R)^2-1|\le \mathbf{1}_{\R^3\backslash B(0,R/2)}$, we have
\ben\label{eq:truner}
\|f\,((\psi^R)^2-1)\|_{L^2_{k+\gamma}(\R^3)}\ls R^{-l}\|f\,((\psi^R)^2-1)\|_{L^2_{k+\gamma+l}(\R^3)}\le R^{-l}\|f\|_{L^2_{k+l+\gamma}(\R^3)}.
\een
Thus the estimate for $F'_N-f$ becomes
\ben\label{eq:FN-f}
\|F'_N-f\|_{L^2_{k+\gamma}(\R^3)}\ls \|(\tpn-I)(f^R)\,m_{k+\gamma}\|_{L^2(\cD_L)}+R^{-l}\|f\|_{L^2_{k+l+\gamma}(\R^3)}.
\een
For the first term, applying \eqref{eq:tpnIfR} yields
\beq
\|(\tpn-I)(f^R)\,m_{k+\gamma}\|_{L^2(\cD_L)}\ls \f{R}{N}\|f\|_{H^1_{k+2}(\R^3)}.
\eeq
Consequently, the estimate for $\cI_{33}$ is
\ben\label{eq:I33}
\cI_{33}\ls \rr{\f{R}{N}\|f\|_{H^1_{k+2}(\R^3)}+R^{-l}\|f\|_{L^2_{k+l+\gamma}(\R^3)}}\|f\|_{L^2_{k+\gamma}}\|g\,m_k\|_{L^2}.
\een

Combining these three parts yields
\ben\label{eq:I3}
\cI_3\ls \rr{\f{R}{N}+R^{-l}}\rr{\|f\|_{H^1_{k+2}(\R^3)}+\|f\|_{L^2_{k+l+\gamma}(\R^3)}}^2\|g\,m_k\|_{L^2}.
\een
\medskip

Having established estimates for the three error terms $\cI_1$, $\cI_2$, and $\cI_3$, we now combine them to obtain the energy inequality for the error function $g$. Summing \eqref{eq:I1}, \eqref{eq:I2}, and \eqref{eq:I3} gives
\ben\label{eq:odeg1}
\ba
\f{1}{2}\f{d}{dt}\|g\, m_k\|_{L^2}^2&+\rr{\f{K}{2}-C_0\|g\,m_k\|_{L^2}-C_3\f{R^{\f32+\varepsilon}}{N}\|f\|_{H^1_{k+2}(\R^3)}}\|g_N\,\psi^R\|_{L^2_{k+\gamma/2}}^2 \\& \ls(1+\|f\|_{L^2_{k+\gamma/2}}^2)\|g\,m_k\|_{L^2}^2+\|g\,m_k\|_{L^2}^3\\&+\f{R^{\f32+\varepsilon}}{N}\|f\|_{L^2_{k+1}}^2\rr{\|g_N\,\psi^R\|_{L^2_{k+\gamma/2}}+\|g\,m_k\|_{L^2}}\\&+\f{R^{\f32+\varepsilon}}{N}\rr{\|f\|_{L^2_{k+1}}+\|g\,m_k\|_{L^2}}\rr{\|g_N\,\psi^R\|_{L^2_{k+\gamma/2}}^2+\|g\,m_k\|_{L^2}^2}.\\
&+\rr{\f{R}{N}+R^{-l}}\rr{\|f\|_{H^1_{k+2}(\R^3)}+\|f\|_{L^2_{k+l+\gamma}(\R^3)}}^2\|g\,m_k\|_{L^2}.
\ea
\een
where the constant depends only on $k,l,\varepsilon$. Then, applying Young's inequality, we estimate the following terms:
\beq
\f{R^{\f32+\varepsilon}}{N}\|f\|_{L^2_{k+1}}^2\rr{\|g_N\,\psi^R\|_{L^2_{k+\gamma/2}}+\|g\,m_k\|_{L^2}}\le \f{K}{4}\|g_N\,\psi^R\|_{L^2_{k+\gamma/2}}^2+\|g\,m_k\|_{L^2}^2+C_{K}\f{R^{3+2\varepsilon}}{N^2}\|f\|_{L^2_{k+1}}^4,
\eeq
\beq
\rr{\f{R}{N}+R^{-l}}\rr{\|f\|_{H^1_{k+2}}+\|f\|_{L^2_{k+l+\gamma}}}^2\|g\,m_k\|_{L^2}\le \|g\,m_k\|_{L^2}^2+\rr{\f{R}{N}+R^{-l}}^2\rr{\|f\|_{H^1_{k+2}}+\|f\|_{L^2_{k+l+\gamma}}}^4.
\eeq
Consequently, \eqref{eq:odeg1} becomes
\beq
\ba
\f{1}{2}\f{d}{dt}\|g\, m_k\|_{L^2}^2&+\rr{\f{K}{4}-C_0\|g\,m_k\|_{L^2}-C_3\f{R^{\f32+\varepsilon}}{N}\rr{\|f\|_{H^1_{k+2}}+\|g\,m_k\|_{L^2}}}\|g_N\,\psi^R\|_{L^2_{k+\gamma/2}}^2 \\& \ls(1+\|f\|_{L^2_{k+\gamma/2}}^2)\|g\,m_k\|_{L^2}^2+\|g\,m_k\|_{L^2}^3\\&+\f{R^{\f32+\varepsilon}}{N}\rr{\|f\|_{L^2_{k+1}}+\|g\,m_k\|_{L^2}}\|g\,m_k\|_{L^2}^2\\
&+\rr{\f{R^{\f32+\varepsilon}}{N}+R^{-l}}^2\rr{\|f\|_{H^1_{k+2}}+\|f\|_{L^2_{k+l+\gamma}}}^4
\\ &\ls \rr{1+\|f\|_{L^2_{k+\gamma/2}}^2+\f{R^{\f32+\varepsilon}}{N}\|f\|_{L^2_{k+1}}}\|g\,m_k\|_{L^2}^2+\rr{1+\f{R^{\f32+\varepsilon}}{N}}\|g\,m_k\|_{L^2}^3\\ &+\rr{\f{R^{\f32+\varepsilon}}{N}+R^{-l}}^2\rr{\|f\|_{H^1_{k+2}}+\|f\|_{L^2_{k+l+\gamma}}}^4.
\ea
\eeq
To proceed, we assume that $N$ is sufficiently large such that $N>R^{\f32+\varepsilon}$. Under this condition, we can rewrite \eqref{eq:odeg1} as
\ben\label{eq:odeg2}
\ba
\f{1}{2}\f{d}{dt}\|g\, m_k\|_{L^2}^2&+\rr{\f{K}{4}-C_0\|g\,m_k\|_{L^2}-C_3\f{R^{\f32+\varepsilon}}{N}\rr{\|f\|_{H^1_{k+2}}+\|g\,m_k\|_{L^2}}}\|g_N\,\psi^R\|_{L^2_{k+\gamma/2}}^2\\ &\ls \rr{1+\|f\|_{L^2_{k+1}}^2}\|g\,m_k\|_{L^2}^2+\|g\,m_k\|_{L^2}^3\\&+\rr{\f{R^{\f32+\varepsilon}}{N}+R^{-l}}^2\rr{\|f\|_{H^1_{k+2}}+\|f\|_{L^2_{k+l+\gamma}}}^4.
\ea
\een
We now establish uniform bounds for the exact solution $f$. Since the initial data $f_0$ satisfies Assumption \ref{TAES}, Proposition \ref{thm:fHnl} guarantees that $f\in L^\infty([0,\infty);H^1_{k+2}(\R^3))\cap L^\infty([0,\infty);L^2_{k+l+\gamma}(\R^3))$. Consequently, there exists a constant $M$, depending only on $k,l$ and $f_0$, such that
\ben\label{eq:boundM1}
\sup_{t\ge 0}\rr{\|f\|_{H^1_{k+2}(\R^3)}+\|f\|_{L^2_{k+l+\gamma}(\R^3)}}<M.
\een
Treating $M$ as a constant depending only on $k,l$, we simplify \eqref{eq:odeg2} to
\ben\label{eq:odeg3}
\ba
\f{1}{2}\f{d}{dt}\|g\, m_k\|_{L^2}^2&+\rr{\f{K}{4}-C_0\|g\,m_k\|_{L^2}-C_3\f{R^{\f32+\varepsilon}}{N}\rr{M+\|g\,m_k\|_{L^2}}}\|g_N\,\psi^R\|_{L^2_{k+\gamma/2}}^2 \\& \ls(1+M^2)\|g\,m_k\|_{L^2}^2+\|g\,m_k\|_{L^2}^3+\rr{\f{R^{\f32+\varepsilon}}{N}+R^{-l}}^2M^4.
\ea
\een
For the initial data $g|_{t=0}=\,(\cP_N-I)(f_0\, \psi^R\,\mathbf{1}_{\mathcal{D}_L})$, applying the same argument as in \eqref{eq:tpnIfR} yields
\beq
\|g|_{t=0}\,m_k\|_{L^2}\ls \f{R}{N}\|f_0\,\psi^R\|_{H^1_k}\ls \f{R}{N}M.
\eeq
Since $M$ is a constant, we can express this estimate as
\beq
\|g|_{t=0}\,m_k\|_{L^2}\le c\,\f{R}{N}.
\eeq
where $c$ is a positive constant depending only on $k,M$. When $N$ is sufficiently large such that 
\ben\label{eq:N}
c\,\f{R}{N}\le \min\rr{\f{K}{8C_0},1},\ \ \ \f{R^{\f32+\varepsilon}}{N}\le \f{K}{8C_3\,(M+1)}
\een
the continuity of $t\to \|g(t)\,m_k\|_{L^2}$ guarantees the existence of $T_*>0$ defined by
\beq
T_*\,:=\,\sup\left\{t\in[0,T],\qquad
  \sup_{s\in[0,t]}\|g(s)\,m_k\|_{L^2}\,\le\,
  \min\left(\frac{K}{8C_0},1\right) \right\}.
\eeq
On the interval $[0,T_*]$, the coercivity condition
\beq \f{K}{4}-C_0\|g\,m_k\|_{L^2}-C_3\f{R^{\f32+\varepsilon}}{N}\rr{M+\|g\,m_k\|_{L^2}}\ge 0,\eeq
holds, and treating $M$ as a constant, \eqref{eq:odeg3} reduces to
\beq
\f{d}{dt}\|g\, m_k\|_{L^2}^2\le 2\kappa\,\|g\,m_k\|_{L^2}^2+C_4\rr{\f{R^{\f32+\varepsilon}}{N}+R^{-l}}^2,
\eeq
for some constants $\kappa,C_4$ depending only on $k,l,M,\varepsilon$. Applying Gronwall's inequality gives
\beq
\ba
\|g(t)\, m_k\|_{L^2}^2\,&\le\,
\rr{\|g|_{t=0}\,m_k\|_{L^2}^2+C_4\rr{\f{R^{\f32+\varepsilon}}{N}+R^{-l}}^2\,\int_0^t e^{-2\kappa s}\,d s}\,e^{2\kappa t}\\ &\le\, \CC^2\rr{\f{R^{\f32+\varepsilon}}{N}+R^{-l}}^2\,e^{2\kappa t} ,\quad\,\forall t\,\in[0,T_*]\,,
\ea
\eeq
where $\CC^2=c+\f{C_4}{2\kappa}$. This estimate demonstrates that by choosing $R$ and $N$ sufficiently large such that
\beq
\CC \rr{\f{R^{\f32+\varepsilon}}{N}+R^{-l}}\exp(\kappa T)\le \min\rr{\f{K}{8C_0},1}.
\eeq
A sufficient condition for this bound is $R>\RR(T)$ and $N>\NN(R,T)$, where
\beq
\RR(t)\,:=\,\rr{\f{2\CC\,\exp\left(\kappa\,t\right) }{\min\{K/(8C_0),1\}}}^{1/l},
\eeq
and
\beq
\NN(R,t):=\f{2\CC \,R^{\f32+\varepsilon}\,\exp\left(\kappa\,t\right) }{\min\{K/(8C_0),1\}}.
\eeq
These conditions also guarantee $R>3$, $L\le N$, and $N>R^{\f32+\varepsilon}$ when the constants are chosen appropriately. Moreover, by taking $\CC$ sufficiently large, condition \eqref{eq:N} is satisfied. Consequently, when $R>\RR(T)$ and $N>\NN(R,T)$, we obtain the estimate:
\beq
\|g(t)\, m_k\|_{L^2}\le\CC\rr{\f{R^{\f32+\varepsilon}}{N}+R^{-l}}\,e^{\kappa t}\le 1,\ \ \forall \, t\in [0,T].
\eeq

Using a classical iteration scheme together with the previous estimate, we establish the well-posedness of \eqref{eq:eqg}, and hence of the numerical equation \eqref{eq:num}.

\smallskip
To complete the proof, we need to extend the error estimate from the torus domain $\cD_L$ to the whole space $\R^3$. The numerical solution $f_N$ is defined as the zero extension of $f_N^R$ outside $\cD_L$. We will estimate the error on two regions: the inner region $B(0,R/2)$ where the numerical solution is well-resolved, and the outer region $\R^3\backslash B(0,R/2)$ where the error is controlled by the decay of the exact solution. We now consider the zero extension $f_N$ defined by \eqref{eq:fN}. Since $m_k=\wei^k$ on $B(0,R)\subset \cD_L$ and $g|_{B(0,R)}=(f^N_R-f^R)|_{B(0,R)}=(f_N-f)|_{B(0,R/2)}$, we have
\beq
\|f_N-f\|_{L^2_k(B(0,R/2))}=\|g\,m_k\|_{L^2(B(0,R/2))},
\eeq
for all $t\in [0,T]$. For the domain $\R^3\backslash B(0,R/2)$, the decay property of the exact solution implies
\beq
\|f^R\|_{L^2_k(\R^3\backslash B(0,R/2))}\le \|f\|_{L^2_k(\R^3\backslash B(0,R/2))}\le (R/2)^{-l}\|f\|_{L^2_{k+l}(\R^3)}\le (R/2)^{-l}M\ls R^{-l}.
\eeq
On the domain $\R^3\backslash B(0,R/2)$, we have
\beq
\ba
\|f_N-f\|_{L^2_k(\R^3\backslash B(0,R/2))}&\le \|f_N-f^R\|_{L^2_k(\R^3\backslash B(0,R/2))}+\|f^R\|_{L^2_k(\R^3\backslash B(0,R/2))}+\|f\|_{L^2_k(\R^3\backslash B(0,R/2))}
\\ &\le C\|g\,m_k\|_{L^2(\cD_L\backslash B(0,R/2))}+C\,R^{-l},
\ea
\eeq
where $C$ depends only on $M,k,l$. Here we use the facts that $f_N-f^R=0$ on $\R^3\backslash \cD_L$ and $f_N-f^R=g$ on $\cD_L$, together with the inequality $\wei^k\ls L^k\ls m_k$ on $\cD_L\backslash B(0,R/2)$. Combining the estimates on the inner and outer regions yields
\beq
\|f_N-f\|_{L^2_k}\le (C+1)\|g\,m_k\|_{L^2(\cD_L)}+C\,R^{-l}\le (C+1)(\CC+1)\rr{\f{R^{\f32+\varepsilon}}{N}+R^{-l}}\,e^{\kappa t},\ \ \forall \, t\in [0,T].
\eeq
With the new constant $(C+1)(\CC+1)$, this completes the proof of Part $(i)$. \end{proof}
\medskip

\begin{proof}[Proof of Theorem \ref{thm:mainresult}: Part $(ii)$]
Under the assumption that the initial data decays exponentially, Theorem \ref{thm:ex} provides a global-in-time bound for the exponentially weighted norm of the exact solution:
\beq
\sup_{t\ge 0}\int_{\R^3}e^{c\wei^s}\,f(t,v)\,dv<\infty.
\eeq

Furthermore, Proposition \ref{thm:fHnl} ensures that $\sup_{t\ge 0}\|f(t)\|_{H^1(\R^3)}<\infty$. Using the Sobolev embedding $H^1(\R^3)\hookrightarrow L^6(\R^3)$ alongside H\"older's inequality, we deduce that
\ben
\sup_{t\ge 0}\left\|f(t,\cdot)\,e^{\f25 c\br{\cdot}^s}\right\|_{L^2(\R^3)}<\infty.
\een

Let us denote $c_0=\f25 c$. Without loss of generality, up to enlarging the constant $M$ defined in \eqref{eq:boundM1}, we may assume that such a global-in-time bound is also bounded by $M$.

Since the structural arguments of the proof in Part $(i)$ remain entirely valid, it suffices to refine the estimates concerning the velocity truncation. Specifically, for the estimates of $\cI_1$, $\cI_2$, and $\cI_3$, the only modification occurs in the term $\cI_{33}$, where the difference $F_N'-f$ is evaluated. 
Under the new exponential decay condition, the error bound \eqref{eq:truner} becomes:
\beq
\|f\,((\psi^R)^2-1)\|_{L^2_{k+\gamma}(\R^3)}\ls e^{-c'R^s}\|f\,e^{c_0\br{\cdot}^s}\|_{L^2(\R^3)},
\eeq
for any given $0<c'<c_0$. Consequently, the estimate for $\cI_{33}$ in \eqref{eq:I33} can be refined to:
\beq
\cI_{33}\ls \rr{\f{R}{N}\|f\|_{H^1_{k+2}(\R^3)}+e^{-c'R^s}\|f\,e^{c_0\br{\cdot}^s}\|_{L^2(\R^3)}}\|f\|_{L^2_{k+\gamma}}\|g\,m_k\|_{L^2}.
\eeq

Following identical arguments to those used to derive equations \eqref{eq:odeg1}--\eqref{eq:odeg3}, we obtain the updated differential inequality. The only difference is the substitution of the polynomial term $R^{-l}$ with the exponential term $e^{-c'R^s}$:
\ben\label{eq:odeg4}
\ba
\f{1}{2}\f{d}{dt}\|g\, m_k\|_{L^2}^2&+\rr{\f{K}{4}-C_0\|g\,m_k\|_{L^2}-C_3\f{R^{\f32+\varepsilon}}{N}\rr{M+\|g\,m_k\|_{L^2}}}\|g_N\,\psi^R\|_{L^2_{k+\gamma/2}}^2 \\& \ls(1+M^2)\|g\,m_k\|_{L^2}^2+\|g\,m_k\|_{L^2}^3+\rr{\f{R^{\f32+\varepsilon}}{N}+e^{-c'R^s}}^2M^4.
\ea
\een
We analyze this differential inequality exactly as in Part $(i)$. Let $N$ be sufficiently large, we define $T_*>0$ by 
\beq
T_*\,:=\,\sup\left\{t\in[0,T]\big|
  \sup_{s\in[0,t]}\|g(s)\,m_k\|_{L^2}\,\le\,
  \min\left(\frac{K}{8C_0},1\right) \right\}.
\eeq
On the interval $[0,T_*]$, the coercivity condition holds:
\beq \f{K}{4}-C_0\|g\,m_k\|_{L^2}-C_3\f{R^{\f32+\varepsilon}}{N}\rr{M+\|g\,m_k\|_{L^2}}\ge 0,\eeq
Treating $M$ as a constant, \eqref{eq:odeg4} reduces to the differential inequality:
\beq
\f{d}{dt}\|g\, m_k\|_{L^2}^2\le 2\kappa\,\|g\,m_k\|_{L^2}^2+C_4\rr{\f{R^{\f32+\varepsilon}}{N}+e^{-c'R^s}}^2,
\eeq
where $\kappa$ and $C_4$ are constants depending only on $k, M, \varepsilon, c, c'$, and $s$. Applying Gronwall's inequality yields
\beq
\ba
\|g(t)\, m_k\|_{L^2}^2\,&\le\,
\rr{\|g|_{t=0}\,m_k\|_{L^2}^2+C_4\rr{\f{R^{\f32+\varepsilon}}{N}+e^{-c'R^s}}^2\,\int_0^t e^{-2\kappa s}\,d s}\,e^{2\kappa t}\\ &\le\, \CC^2\rr{\f{R^{\f32+\varepsilon}}{N}+e^{-c'R^s}}^2\,e^{2\kappa t} ,\quad\,\forall t\,\in[0,T_*]\,,
\ea
\eeq
with $\CC^2=c+\f{C_4}{2\kappa}$. This estimate demonstrates that by choosing $R$ and $N$ sufficiently large such that
\beq
\CC \rr{\f{R^{\f32+\varepsilon}}{N}+e^{-c'R^s}}\exp(\kappa T)\le \min\rr{\f{K}{8C_0},1}.
\eeq
A sufficient condition to guarantee this upper bound is to impose $R>\RR(T)$ and $N>\NN(R,T)$, where
\beq
\RR(t)\,:=\,\rr{\f{\kappa}{c'}\,t+\f{1}{c'}\,\log\f{2\CC }{\min\{K/(8C_0),1\}}}^{1/s},
\eeq
and
\beq
\NN(R,t):=\f{2\CC \,R^{\f32+\varepsilon}\,\exp\left(\kappa\,t\right) }{\min\{K/(8C_0),1\}}.
\eeq
Consequently, assuming the constants are appropriately configured and taking $R>\RR(T)$ and $N>\NN(R,T)$, we secure the uniform bound:
\beq
\|g(t)\, m_k\|_{L^2}\le\CC\rr{\f{R^{\f32+\varepsilon}}{N}+e^{-c'R^s}}\,e^{\kappa t}\le 1,\ \ \forall \, t\in [0,T].
\eeq
Finally, the extension of the error estimate from the torus $\cD_L$ to the whole space $\R^3$ to bound the difference $\|f-f_N\|_{L^2_k(\R^3)}$ follows the exact same line of reasoning as in Part $(i)$. This completes the proof of Part $(ii)$  and thereby concludes the proof of Theorem \ref{thm:mainresult}. \end{proof}

\section{Numerical simulations}
\label{sec:numerical}
\setcounter{equation}{0}
\setcounter{figure}{0}
\setcounter{table}{0}

In this section, we perform numerical simulations using the Fourier method. We apply a fourth-order Runge-Kutta scheme for time discretization, with the function $\psi^R$ is given by
$$
\psi^R(v) \,=\,
\begin{cases}
    0, \, &{\rm if}\,\;  |v| > R, \\[0.3em]
        \ds 10\;\left(1-\frac{|v|}{R}\right), \,  &{\rm if}\,\; 0.9 \;R, \,\leq\;
|v|\; \leq\; R, \\[0.5em]
 1, \,  &{\rm if}\,\;  |v| \,<\, 0.9\, R,
\end{cases}
$$
with $R=\frac{2}{3+\sqrt{2}}L$, in the cube domain $\cD_L=[-L,L]^3$. The smoothing function $\Phi(x)$ associated with the projection operator $\tpn$ is given by:

$$ \Phi(x) = 
\begin{cases} 0, & \text{if }\, |x|_{\infty} > 1, \\[0.3em] 1 - 3t^2 + 2t^3, & \text{if }\, 0.9 \leq |x|_{\infty} \leq 1, \\[0.3em] 1, & \text{if }\, |x|_{\infty} < 0.9,
\end{cases} $$
where $t = 10(|x|_{\infty} - 0.9)$ and $|x|_{\infty} = \max{|x_1|, |x_2|, |x_3|}$ denotes the infinity norm of the vector $x = (x_1, x_2, x_3) \in \mathbb{R}^3$.

We consider two cases in our simulations.
First, we address the case of Maxwellian molecules where $\gamma=0$,
for which we compare the approximate solution with an explicit
solution. Second, we address the hard sphere case where $\gamma=1$, and observe the behavior of the approximate
solution through relative entropy, high-order moments, and Fischer
information.
\subsection{Maxwellian molecules $\gamma=0$}
We consider the case of Maxwellian molecules $\gamma=0$ in
\eqref{1}, with the collision kernel as a constant:
\beq
B(|v-v_*|,\cos\theta)=\f{1}{4\pi}.
\eeq
This solution, known as the BKW (Bobylev-Krook-Wu)~\cite{Bo,KW} solution for Maxwell molecules, takes the form
\beq
    f(t,v) = \frac{1}{2(2\pi \,K(t))^{3/2}} \exp\left(-\frac{|v|^2}{2\,K(t)}\right) \left( \frac{5\,K(t)-3}{K(t)} + \frac{1-K(t)}{K(t)^2}\, |v|^2 \right),
\eeq
where we take $K(t) = 1 - 0.4 \exp(-t/6)$ to ensure that the solution is positive from the initial time $t = 0$. The initial data for this case is given by
\beq
f(0,v)=f_0(v) \,=\;  \frac{1}{2(2\pi\; K_0)^{3/2}}\; \frac{1-K_0}{K_0^2}\,|v|^2 \;\exp\left(-\frac{|v|^2}{2\,K_0^2}\right),
\eeq
with $K_0=0.6$. We compute the $L^2$ error as
\beq
e(t)=\int_{\mathcal{D}_L} |f_{N}(t,v) - f(t,v)|^2 dv,
\eeq
where $f_N$ is the numerical solution obtained from \eqref{eq:num} at time $t$, and $f$ is the exact BKW solution.

We conducted a series of numerical calculations by varying the two parameters, the length of the domain $2L$ and the number of modes per direction $2N$. We conduct the numerical simulations in time period $[0,1]$ with time step $\Delta t=0.01$.

First, we present the results when the
number of modes $2N=32,48,64$, to illustrate the evolution of the error (on
a logarithmic scale) as a function of $L$ in Figure \ref{fig:1}. 
According to the error estimate in Theorem \ref{thm:mainresult}, the total error is governed by two competing terms: the spectral approximation error $R^{3/2+\varepsilon}/N$ and the truncation error $R^{-l}$, where $R = \frac{2}{3+\sqrt{2}}L$. The spectral error increases with $R$ (and thus with $L$) due to the polynomial factor $R^{3/2+\varepsilon}$, while the truncation error decreases as $R^{-l}$ for sufficiently large $l$.

The numerical results in Figure \ref{fig:1} clearly demonstrate this trade-off. For the smallest number of modes $2N=32$, the spectral error term dominates, and consequently the overall error increases monotonically with $L$. In contrast, for the largest number of modes $2N=64$, the truncation error term becomes the leading contribution, and the error decreases as $L$ increases. The intermediate case $2N=48$ exhibits a more nuanced behavior: the error initially decreases as $L$ increases up to $L \approx 12$, where the two error terms achieve an optimal balance. Beyond this optimal value, the spectral error begins to dominate, causing the total error to increase. This behavior is entirely consistent with the theoretical prediction that the optimal choice of $R$ (and hence $L$) depends on $N$, and that there exists a critical value where the two error terms are of comparable magnitude. 
\begin{figure}[htbp]
	\centering
\includegraphics[width=\textwidth]{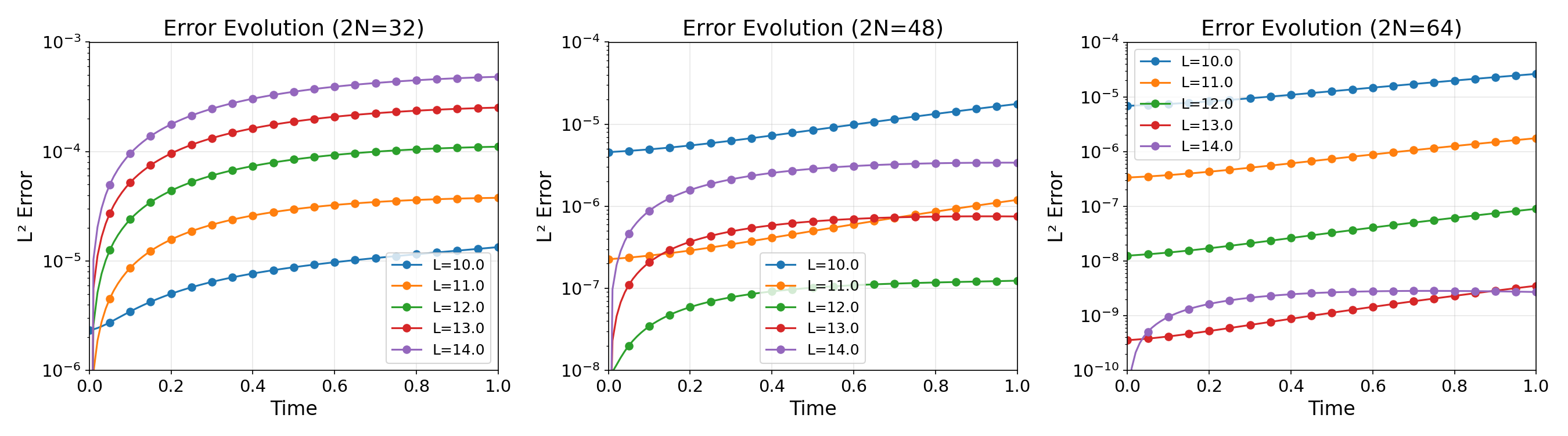}
	\caption{{\bf Maxwellian molecules:} time evolution of the  $L^2$ error in log scale for the scheme \eqref{eq:num} with respect to $L$ for $2N=32$, $2N=48$ and $2N=64$.}
	\label{fig:1}
\end{figure}

In the second part, we consider a sufficiently large domain $\cD_L$
and focus on the error with respect to the number of Fourier modes per
direction $2N$. We then plot the evolution
of the error (still on a logarithmic scale) for $L=12,13,14$ in Figure \ref{fig:2}. 

For the smallest domain size $L=12$, we observe that the error decreases significantly as $N$ increases from $2N=32$ to $2N=48$. However, for $2N=56$ and $2N=64$, the error curves nearly overlap, indicating that the total error has reached a saturation level. This behavior demonstrates that for this value of $L$, the truncation error $R^{-l}$ becomes the dominant contribution once $N$ is sufficiently large, and further increasing $N$ no longer reduces the total error.

In contrast, for larger domain sizes $L=13$ and $L=14$, the truncation error $R^{-l}$ becomes smaller, and the spectral error $R^{3/2+\varepsilon}/N$ emerges as the leading term. Consequently, the error continues to decrease as $N$ increases across the entire range of $N$ values considered, clearly exhibiting the spectral accuracy predicted by the theoretical estimate. This transition from truncation-dominated to spectrum-dominated regimes provides strong numerical validation of the error estimate in Theorem \ref{thm:mainresult}.

\begin{figure}[htbp]
	\centering
\includegraphics[width=\textwidth]{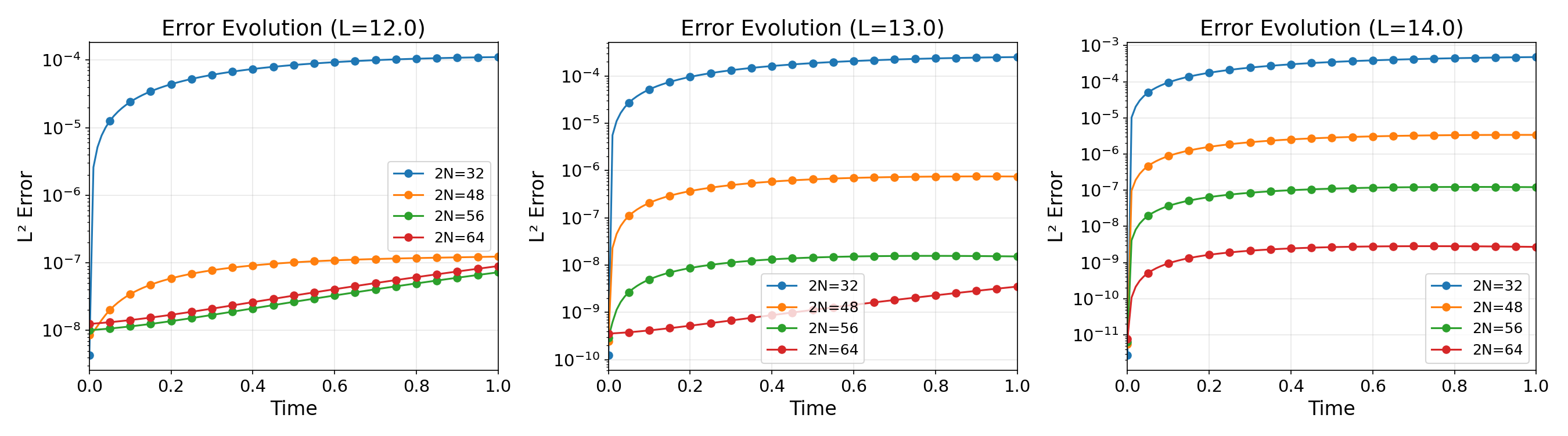}
	\caption{{\bf Maxwellian molecules:} time evolution of the $L^2$
	  error in log scale for
	  the scheme \eqref{eq:num} with respect to $N$ for $L=12$, $L=13$ and $L=14$.}
	\label{fig:2}
\end{figure}
\subsection{Hard sphere molecules $\gamma=1$}This test is used to compute the time evolution of the numerical solution \eqref{eq:num} for the hard sphere collision kernel with a sum of two Gaussians as the initial condition. This test case has been widely studied in the literature, including in \cite{PR00} and \cite{GHHH}, and serves as a benchmark for validating numerical schemes for the Boltzmann equation. We choose the collision kernel as
\beq
B(|v-v_*|,\cos\theta)=\frac{1}{4\pi}|v-v_*|,
\eeq
and we choose the initial condition as
$$
f(0, v) = \frac{1}{2(2\pi)^{3/2}} \left[ \exp\left(-\frac{|v - u_1|^2}{2}\right) + \exp\left(-\frac{|v - u_2|^2}{2}\right) \right]
$$
with
$u_1 = (0, 0, 2), \ u_2 = (0, 0, -2)$
. The integration time is $T
 =6$ with time step $\Delta t = 0.1$ in the computational domain $\cD_L=[-L,L]^3$ with the number of Fourier modes per direction $2N=64$.

First, we visualize the cross-section of the distribution function at different times. Figure \ref{fig:5} displays $f(0,0,v_3)$ at $t=0, 0.5, 1, 1.5, 3, 6$ with $L=16$, illustrating the gradual relaxation from the initial sum of two Gaussians to the equilibrium Maxwellian profile.
 \begin{figure}[htbp]
	 \centering
	 \includegraphics[width=\textwidth]{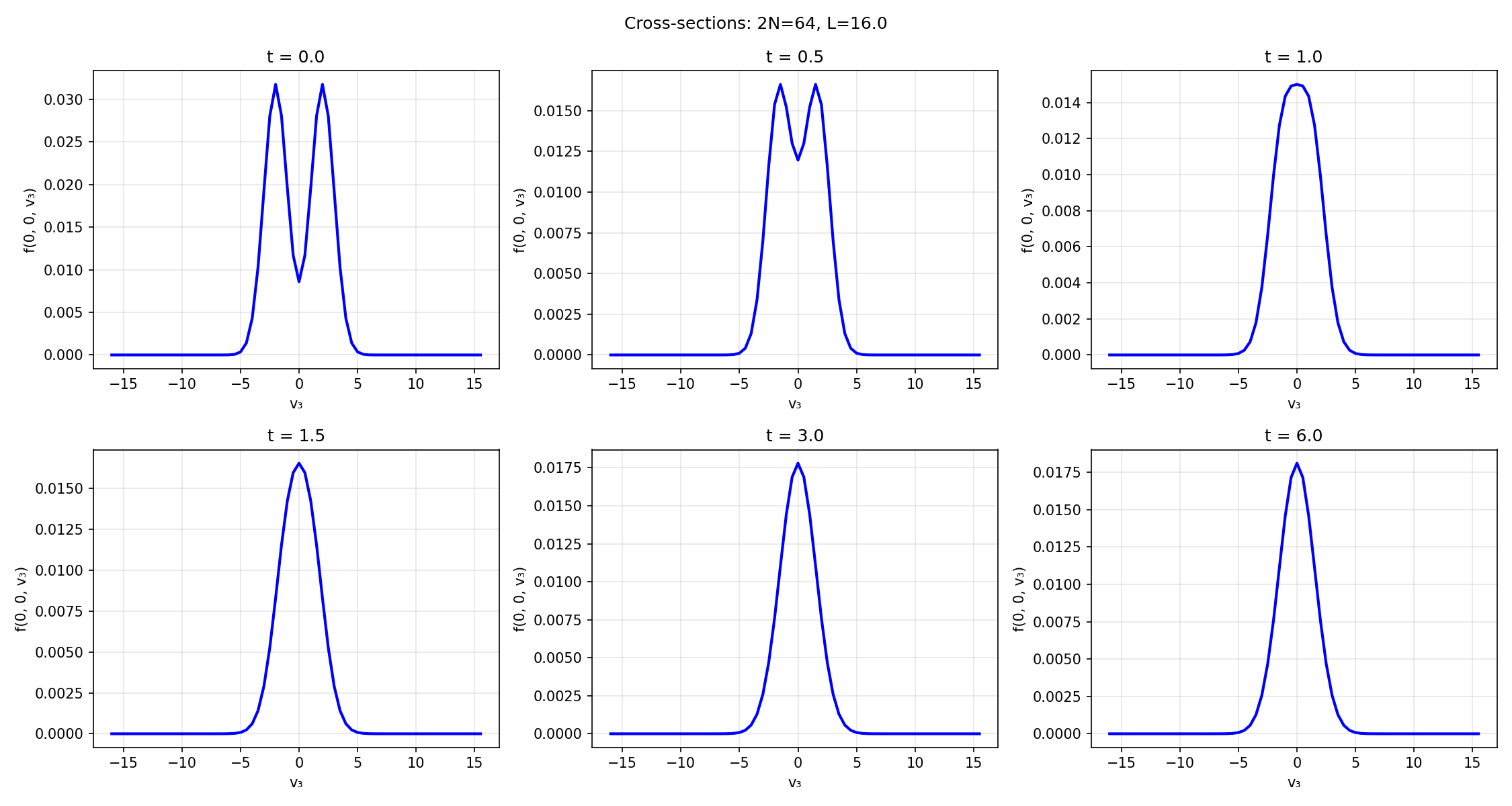}
	 \caption{{\bf Hard sphere molecules:} Cross-section $f(0,0,v_3)$ at different times $t=0, 0.5, 1, 1.5, 3, 6$.}
	 \label{fig:5}
 \end{figure}

Next, we investigate the conservation properties of the equation. It is important to note that the scheme \eqref{eq:num} does not exactly preserve the mass, momentum, and energy of the Boltzmann equation. To quantify this, we perform numerical experiments with $2N=64$ and varying computational domain sizes $L=16, 17, 18$. Figure \ref{fig:mass_energy} shows the time evolution of the mass $\rho=\int_{\R^3}f(v)\,dv$ and energy $E=\int_{\R^3}|v|^2\,f(v)\,dv$ for these three cases. Both quantities exhibit a gradual decrease over time, with the decay rate inversely proportional to the domain size $L$. Specifically, larger values of $L$ result in slower decay, indicating that the conservation properties improve as the computational domain expands.

\begin{figure}[htbp]
	\centering
	\includegraphics[width=0.48\textwidth]{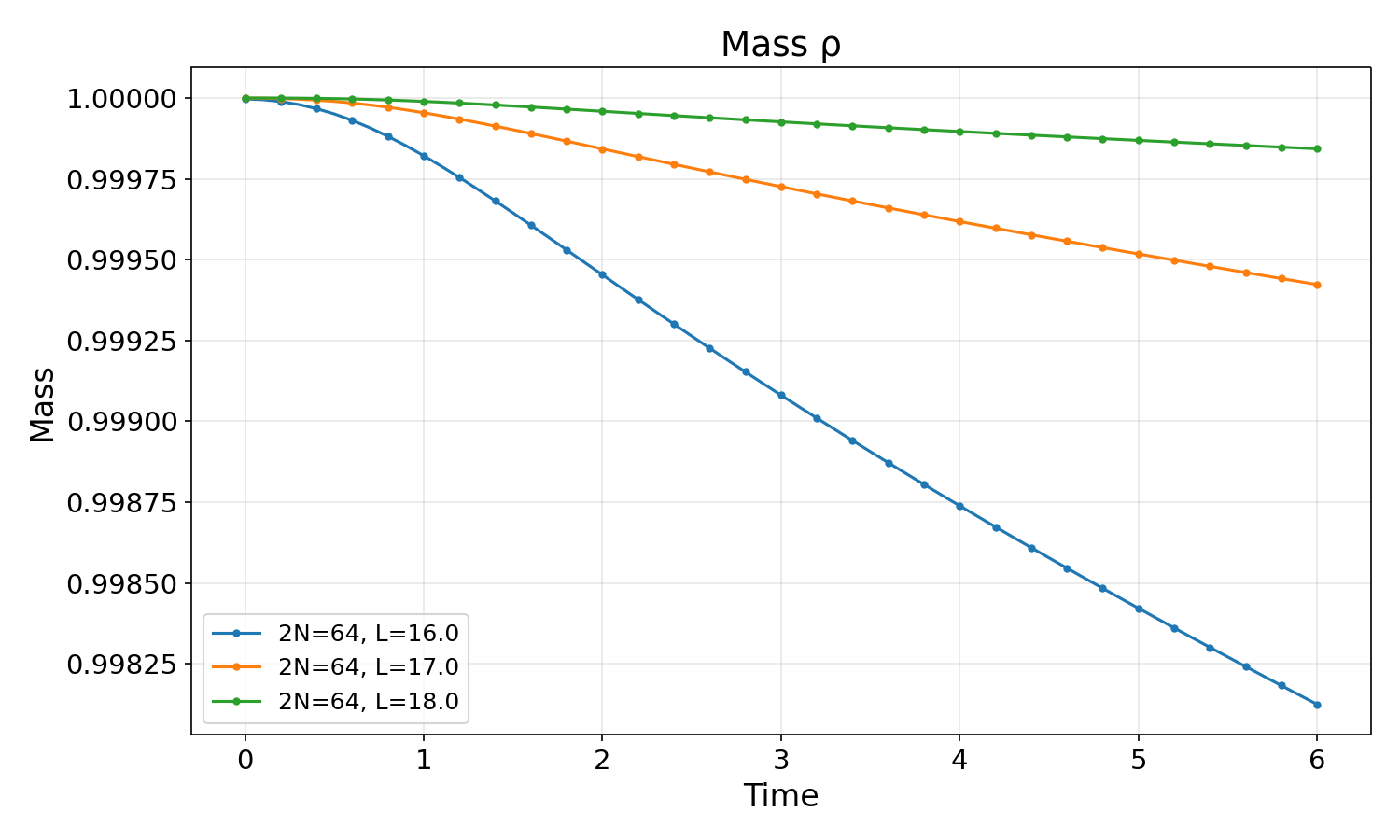}
	\includegraphics[width=0.48\textwidth]{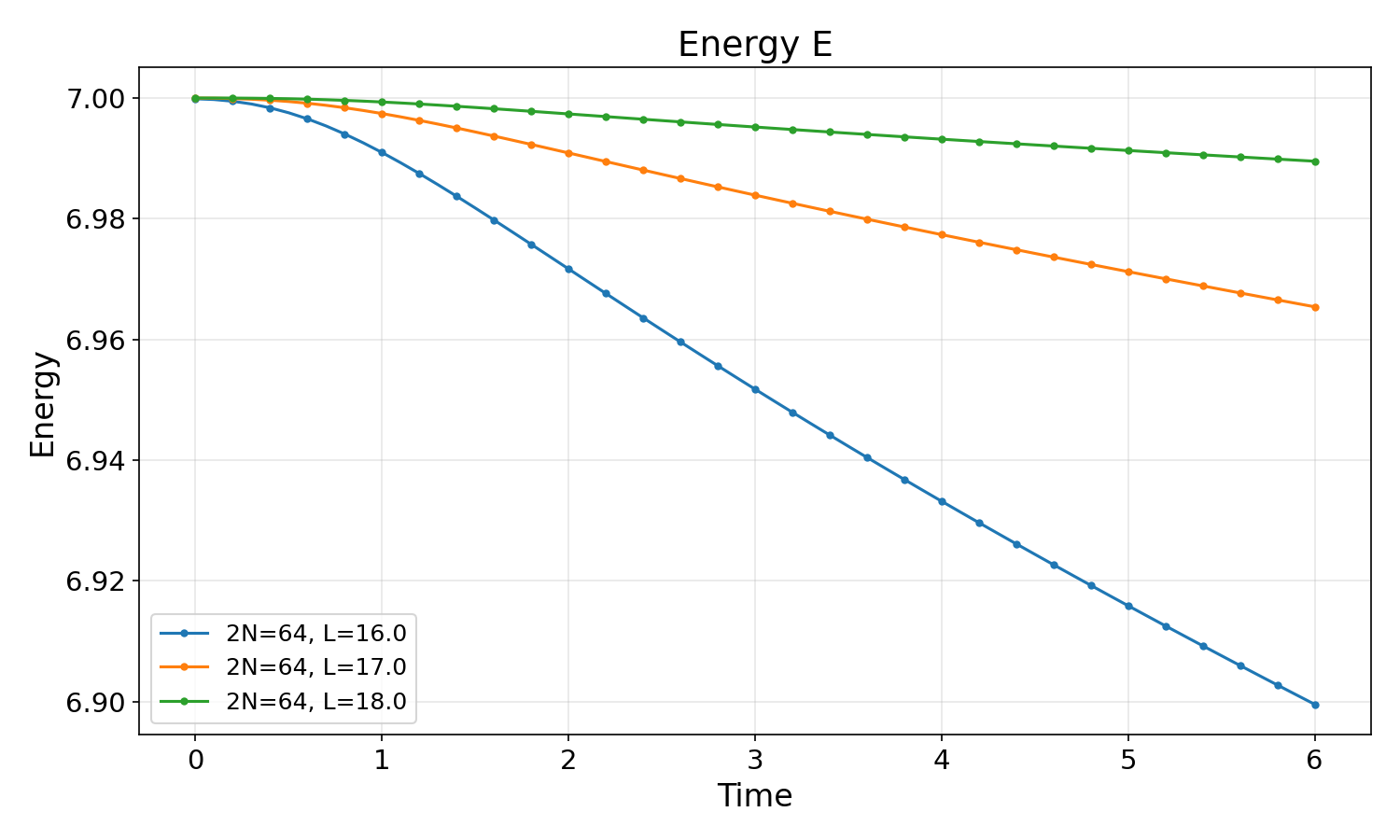}
	\caption{{\bf Hard sphere molecules:} Time evolution of the mass (left) and energy (right) for $2N=64$ with different domain sizes $L=16, 17, 18$.}
	\label{fig:mass_energy}
\end{figure}

We further analyze the numerical accuracy by examining the evolution of several important quantities. According to the Boltzmann H-theorem, for the exact solution of the Boltzmann equation, the entropy $H(f) = \int_{\mathbb{R}^3} f \log f \, dv$ and the relative entropy $H(f|\mu)$ defined in \eqref{DefHt} should be non-increasing functions of time due to the $H$-Theorem, and the Fisher information $I(f) = \int_{\mathbb{R}^3} \frac{|\nabla_v f|^2}{f} \, dv$ should be non-increasing functions of time from the recent result \cite{ISV}. These quantities, together with the $L^2$ distance $\|f-\mu\|_{L^2}$, should eventually approach constant values as the solution converges to the steady-state Maxwellian distribution $\mu$. Figure \ref{fig:entropy_quantities} presents the time evolution of these four quantities for $2N=64$ with $L=16, 17, 18$.

\begin{figure}[htbp]
	\centering
	\includegraphics[width=0.48\textwidth]{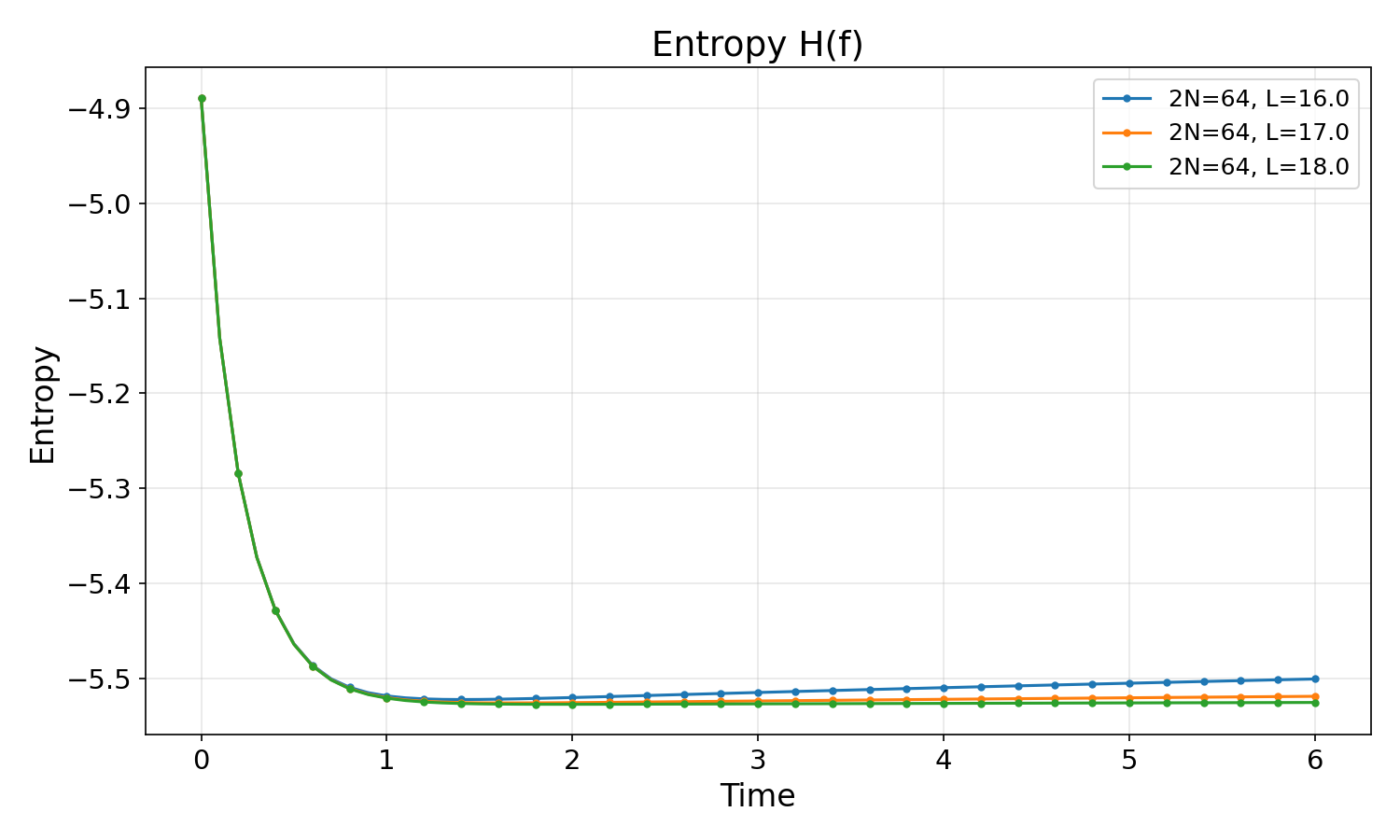}
	\includegraphics[width=0.48\textwidth]{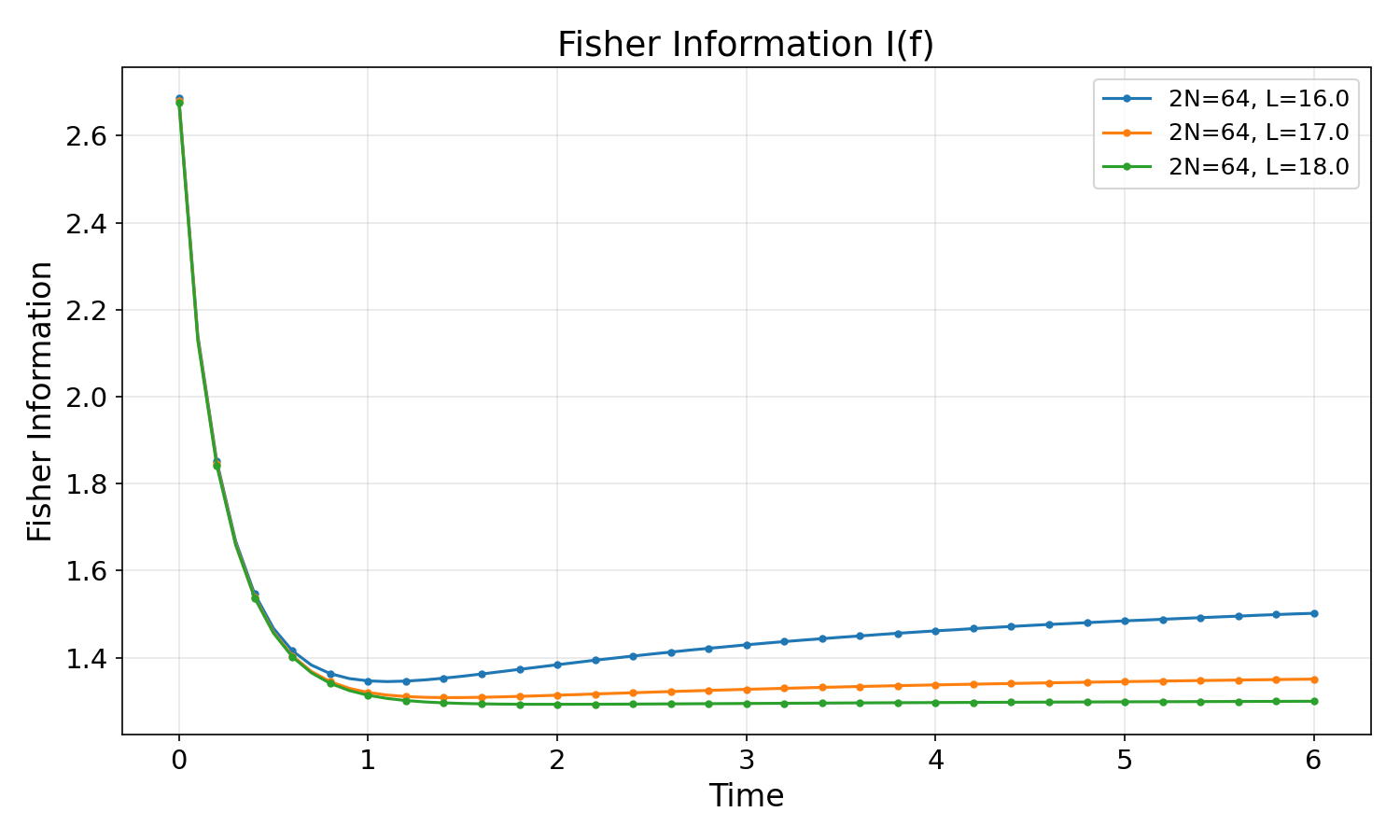}
	\includegraphics[width=0.48\textwidth]{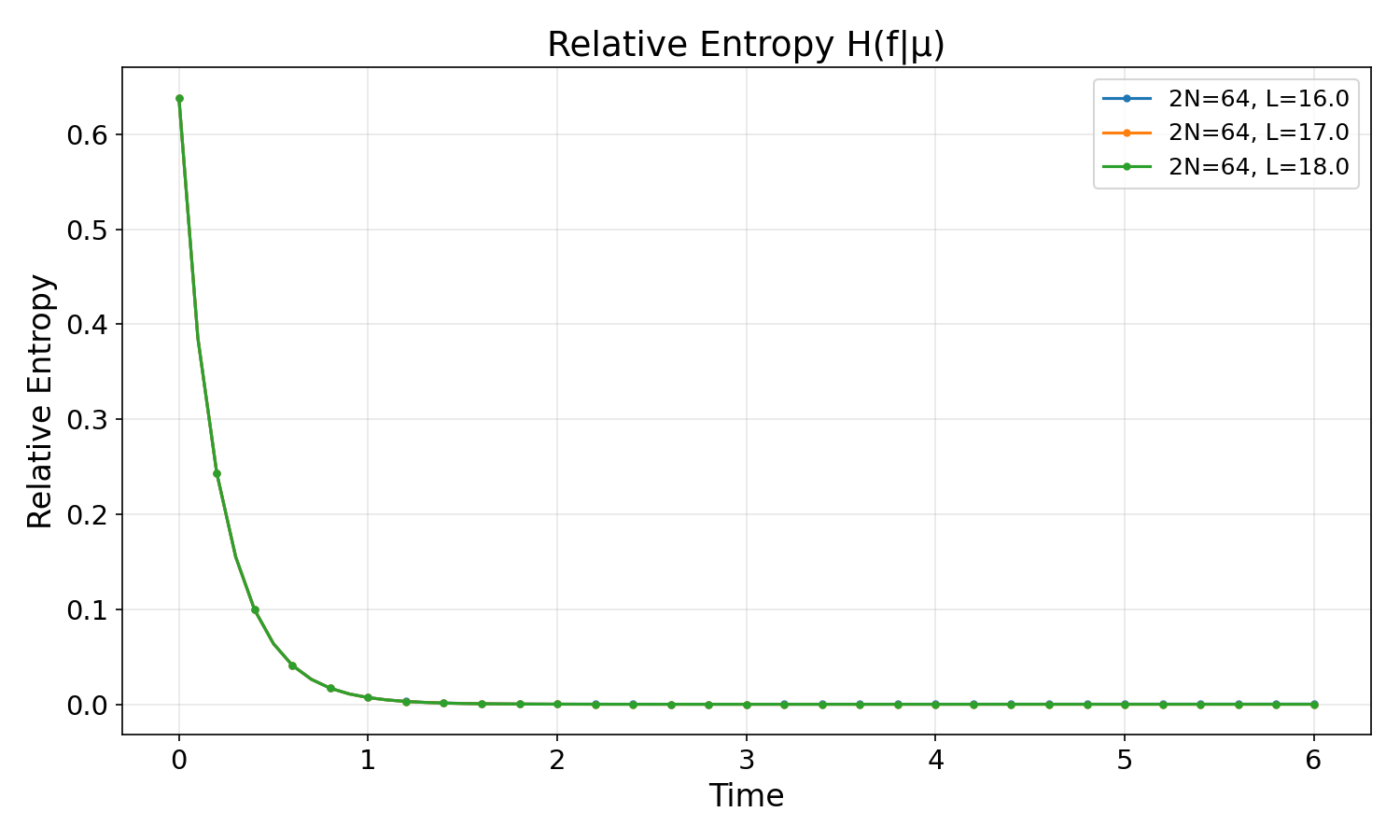}
	\includegraphics[width=0.48\textwidth]{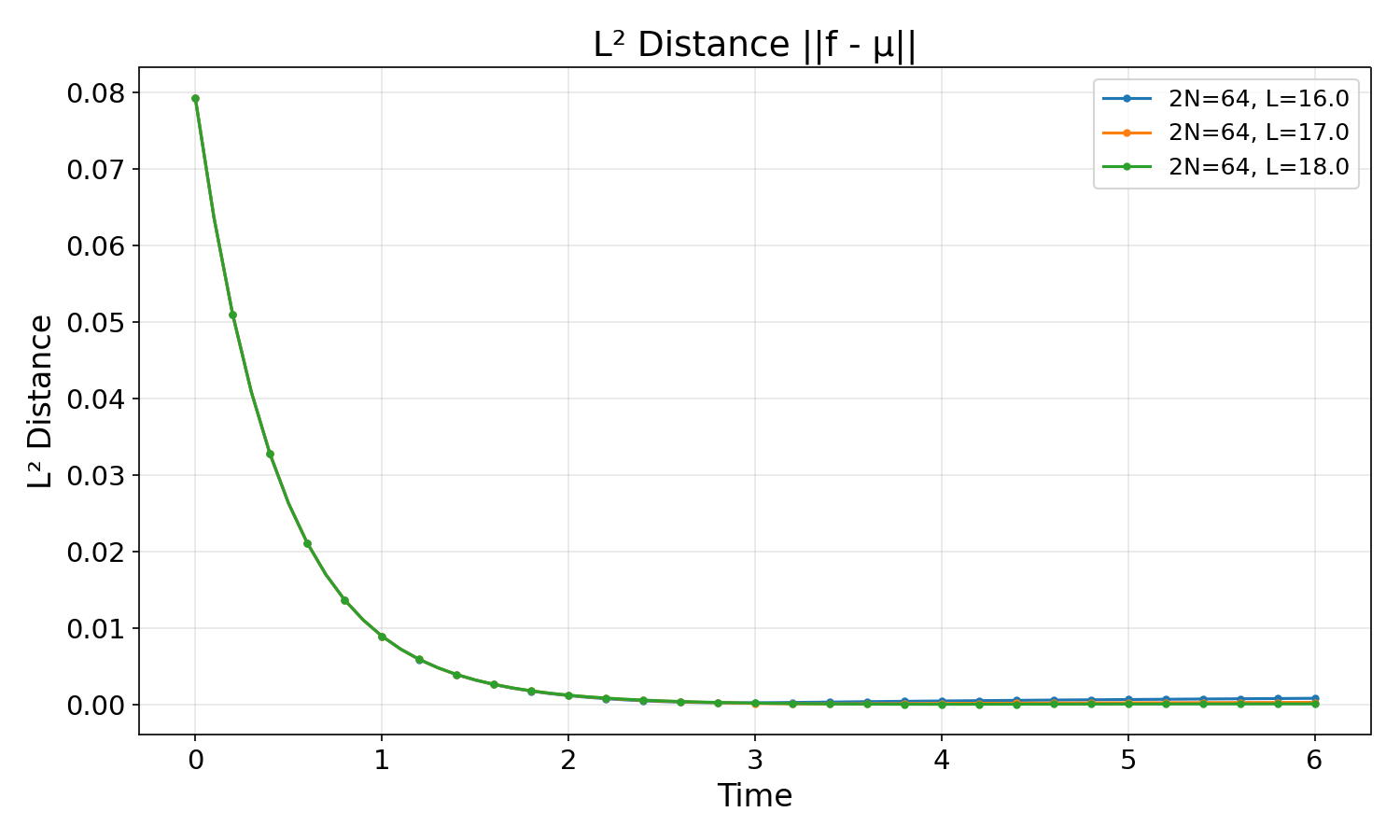}
	\caption{{\bf Hard sphere molecules:} Time evolution of entropy $H(f)$ (top left), Fisher information $I(f)$ (top right), relative entropy $H(f|\mu)$ (bottom left), and relative $L^2$ distance $\|f-\mu\|_{L^2}$ (bottom right) for $2N=64$ with different domain sizes $L=16, 17, 18$.}
	\label{fig:entropy_quantities}
\end{figure}

The numerical results reveal a subtle deviation from the theoretical predictions. After approximately $t=1.5$, the numerical solution appears to have approached the steady state. However, instead of remaining constant as expected for the exact solution, these quantities exhibit a very slight increase over time. This non-monotonic behavior is more pronounced for smaller values of $L$, while it becomes nearly imperceptible for larger $L$. In particular, the relative entropy curves for $L=16, 17, 18$ almost completely overlap, making the upward trend virtually undetectable.

This phenomenon can be explained by the fact that our numerical scheme does not preserve the steady state. As demonstrated in Figure \ref{fig:mass_energy}, the mass and energy gradually decrease over time due to the truncation of the velocity domain. Since the steady-state Maxwellian $\mu$ is determined by the conserved quantities (mass, momentum, and energy), the decrease in mass and energy causes the numerical solution to shift away from $\mu$ continuously, resulting in the observed slight increase in the quantities. Larger values of $L$ lead to slower decay in mass and energy, which in turn causes the steady state to change more slowly, thereby reducing the magnitude of the observed increase. This analysis highlights the importance of using sufficiently large computational domains and number of Fourier modes to obtain accurate long-time behavior.

\section*{Acknowledgement}
The research of L.-B. He was supported by NSF of China under Grant No.11771236 and New Cornerstone Investigator Program 100001127. The research is supported by the High Performance Computing Center, Tsinghua University. 
L. Liu acknowledges the support by National Key R\&D Program of China (2021YFA1001200), Ministry of Science and Technology in China, and General Research Fund (14303022 \& 14301423 \& 14307125)  funded by Research Grants Council of Hong Kong.
We thank the student Zhen Hao from Wuhan University for helpful discussions during his visiting at The Chinese University of Hong Kong.  
\appendix
\section{Technical proofs and classical results}\label{sec:appendix}
\subsection{Proof of Theorem \ref{thm:Qfgh} and Theorem \ref{thm:Qfgg}}\label{sec:Boltzmann} We first introduce some lemmas and notation:

\begin{lem}\label{thm:ch}
	(\cite{CHJ} Lemma 4.1) If $b \in L^1([0,1])$, then we have

	(1) (Regular change of variables)
	$$
	\int_{\mathbb{R}^3} \int_{\mathbb{S}^2} b(\cos \theta)\left|v-v_*\right|^\gamma f(v^{\prime})\, d \sigma \, d v=\int_{\mathbb{R}^3} \int_{\mathbb{S}^2} b(\cos \theta) \frac{1}{\cos ^{3+\gamma}(\theta / 2)}\left|v-v_*\right|^\gamma f(v)\, d \sigma \,d v .
	$$
	
	(2) (Singular change of variables)
	$$
	\int_{\mathbb{R}^3} \int_{\mathbb{S}^2} b(\cos \theta)\left|v-v_*\right|^\gamma f(v^{\prime})\, d \sigma\, d v_*=\int_{\mathbb{R}^3} \int_{\mathbb{S}^2} b(\cos \theta) \frac{1}{\sin ^{3+\gamma}(\theta / 2)}\left|v-v_*\right|^\gamma f(v_*)\, d \sigma \, d v_* .
	$$
\end{lem}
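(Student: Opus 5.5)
The plan is to prove both identities by a direct change of variables in polar coordinates centred at the fixed velocity. The deflection angle is then handled by a second change of variables on the sphere, which turns the half-angle back into $\theta$. I treat (1) in detail; (2) is the same computation with a different geometric map.

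\emph{Regular change.} Fix $v_*$ and write $v-v_*=r\omega$ with $r>0$, $\omega\in\S^2$, so $\cos\theta=\omega\cdot\sigma$. From the collision rules,
\[
v'-v_*=\tfrac{r}{2}(\omega+\sigma)=r\cos(\theta/2)\,\omega',\qquad \omega':=\frac{\omega+\sigma}{|\omega+\sigma|}.
\]
Thus $\omega'$ is the bisector of $\omega$ and $\sigma$, with $\omega'\cdot\sigma=\cos(\theta/2)$.

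For fixed $\sigma$, the map $\omega\mapsto\omega'$ sends $\S^2\setminus\{-\sigma\}$ onto the open hemisphere $\{\omega'\cdot\sigma>0\}$. Its inverse is the reflection formula $\omega=2(\omega'\cdot\sigma)\omega'-\sigma$, whose surface Jacobian gives $d\omega=4(\omega'\cdot\sigma)\,d\omega'$. Setting $r'=r\cos(\theta/2)$ gives $r^2dr=r'^2dr'/\cos^3(\theta/2)$. Combining these with $|v-v_*|^\gamma=r'^\gamma\cos^{-\gamma}(\theta/2)$, the left-hand side becomes
\[
\int_{\S^2}\!\int_{\omega'\cdot\sigma>0}\!\int_0^\infty b(\cos 2\theta')\,\frac{4}{\cos^{2+\gamma}\theta'}\,r'^{2+\gamma}f(v_*+r'\omega')\,dr'\,d\omega'\,d\sigma,\qquad \cos\theta'=\omega'\cdot\sigma .
\]

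Next I apply Fubini and, for fixed $\omega'$, compute the $\sigma$-integral over the hemisphere. The integrand depends only on $\theta'\in[0,\pi/2)$, so $d\sigma=2\pi\sin\theta'\,d\theta'$. Substituting $\theta=2\theta'$ and using $\sin\theta\,d\theta=4\sin\theta'\cos\theta'\,d\theta'$, i.e.\ $\sin\theta'\,d\theta'=\sin\theta\,d\theta/(4\cos(\theta/2))$, converts the hemisphere integral into an integral over the whole sphere in a new variable $\tilde\sigma$ with $\tilde\sigma\cdot\omega'=\cos\theta$. The weight becomes $b(\cos\theta)\cos^{-3-\gamma}(\theta/2)$, and renaming $v_*+r'\omega'$ as $v$ gives exactly the right-hand side of (1).

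\emph{Singular change.} Fix $v$ and use $v'-v=\tfrac{r}{2}(\sigma-\omega)$, where now $v_*-v=-r\omega$. Then $|v'-v|=r\sin(\theta/2)$, and the direction of $v'-v$ is $(\sigma-\omega)/|\sigma-\omega|$, which makes angle $\pi/2-\theta/2$ with $\sigma$. The same two-step computation applies: first the radial/hemisphere map with Jacobian $4\sin(\theta/2)$, then the angular substitution. Each occurrence of $\cos(\theta/2)$ is replaced by $\sin(\theta/2)$, which yields the factor $\sin^{-3-\gamma}(\theta/2)$. For both identities, $b\in L^1$ and Tonelli (first for $f\ge0$, then by linearity) justify the exchanges of integration order.

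\emph{Main obstacle.} The delicate step is the bookkeeping between the original deflection angle and the half-angle: the surface Jacobian of the bisector/reflection map, and the re-parametrisation of $\sigma$ that turns the hemisphere integral in $\theta'$ back into a full-sphere integral in $\theta$. I would first verify the surface Jacobian $d\omega=4\cos\theta'\,d\omega'$ by computing in spherical coordinates centred at $\sigma$, where the map is simply $\theta\mapsto\theta/2$ on the polar angle with the azimuth unchanged. Once the constants there are confirmed, the remaining steps are routine.
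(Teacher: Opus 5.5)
The paper gives no proof of this lemma; it is quoted from \cite{CHJ}, so there is no in-paper argument to compare with. Your argument is the standard one and it is correct. Your polar-coordinate computation is equivalent to the classical Jacobians $|\partial v'/\partial v|=\tfrac14\cos^2(\theta/2)$ at fixed $(v_*,\sigma)$ and $|\partial v'/\partial v_*|=\tfrac14\sin^2(\theta/2)$ at fixed $(v,\sigma)$; indeed $r^2\,dr\,d\omega=4\cos^{-2}(\theta/2)\,r'^2\,dr'\,d\omega'$. The hemisphere reparametrisation $\sin\theta'\,d\theta'=\sin\theta\,d\theta/(4\cos(\theta/2))$ then gives exactly $\cos^{-3-\gamma}(\theta/2)$.

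In (2) you should spell out two things. First, the polar angle of $(\sigma-\omega)/|\sigma-\omega|$ from $\sigma$ is $\alpha=\pi/2-\theta/2$, so $\sin\alpha\,|d\alpha|=\sin\theta\,d\theta/(4\sin(\theta/2))$. Second, after renaming $v+r''\omega''$ as $v_*$, the deflection angle on the right-hand side is measured from $(v-v_*)/|v-v_*|=-\omega''$, not from $\omega''$. This is harmless. Your computation actually shows that each side of (1) equals $C_{\mathrm{reg}}\int_{\R^3}|v-v_*|^\gamma f(v)\,dv$, and each side of (2) equals $C_{\mathrm{sing}}\int_{\R^3}|v-v_*|^\gamma f(v_*)\,dv_*$, where $C_{\mathrm{reg}}=\int_{\mathbb{S}^2}b(\cos\theta)\cos^{-3-\gamma}(\theta/2)\,d\sigma$ and $C_{\mathrm{sing}}=\int_{\mathbb{S}^2}b(\cos\theta)\sin^{-3-\gamma}(\theta/2)\,d\sigma$. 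Neither constant depends on the choice of axis.

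This factorisation also shows that your closing justification needs correcting. Tonelli uses only $b\ge0$ and $f\ge0$, and then both identities hold in $[0,\infty]$; the hypothesis $b\in L^1$ plays no role there. It is also not enough to make the extension to signed $f$ ``by linearity'' legitimate. The function $\cos^{-3-\gamma}(\theta/2)\sin\theta$ is not integrable near $\theta=\pi$, and $\sin^{-3-\gamma}(\theta/2)\sin\theta$ is not integrable near $\theta=0$. So for $b\equiv1$ one has $C_{\mathrm{reg}}=C_{\mathrm{sing}}=+\infty$, and both sides are infinite for every nontrivial $f\ge0$.

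The lemma should therefore be read as an identity between nonnegative integrals. Passing to signed $f$ is allowed only when the corresponding right-hand side is finite for $|f|$, that is, when $C_{\mathrm{reg}}\int|v-v_*|^\gamma|f(v)|\,dv<\infty$, resp.\ the analogue for (2). The nonnegative version is all that is used in Lemma \ref{thm:bv}, where it is applied to $|h|^2$ with $b$ replaced by $b\,\sin^{3/2+\gamma/2}(\theta/2)$.
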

\begin{lem}\label{thm:can}
	(Cancellation Lemma, \cite{CHJ} Lemma 4.2 ) For any smooth function $f$, it holds that $\int_{\mathbb{R}^3 \times \mathbb{S}^2} B(v-v_*, \sigma)(f^{\prime}-f)\, d v \,d \sigma=(f * S)(v_*)$, where
	$$
	S(z)=\left|\mathbb{S}^1\right| \int_0^{\frac{\pi}{2}} \sin \theta\left[\frac{1}{\cos ^3(\theta / 2)} B\left(\frac{|z|}{\cos (\theta / 2)}, \cos \theta\right)-B(|z|, \cos \theta)\right]\, d \theta.
	$$	
\end{lem}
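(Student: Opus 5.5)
The plan is to treat the Cancellation Lemma as a direct consequence of the regular change of variables of Lemma \ref{thm:ch}~(1), applied with $v_*$ frozen. I would first fix $v_*$ and write the integrand in terms of $z = v - v_*$, so that the left-hand side becomes
\[
\int_{\R^3\times\S^2} B(|z|,\cos\theta)\,\bigl(f(v')-f(v)\bigr)\,dz\,d\sigma, \qquad v' = v_* + \tfrac{z+|z|\sigma}{2}.
\]
Because $f$ is smooth and integrable, each of the two pieces can be handled separately. The only subtle point is that for a non-cutoff $b$ they may individually diverge; in that case I would truncate the angle to $\theta\ge\eta$, run the argument, and let $\eta\to0$ at the end. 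In the cutoff setting of this paper the splitting is legitimate without truncation.

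For the gain-type piece $\int B\,f(v')\,dv\,d\sigma$, I would use the map $v\mapsto v'$ at fixed $\sigma$ and $v_*$. This is the regular change of variables, and its Jacobian is $\frac{1}{4}\cos^2(\theta/2)$ up to the angular reparametrization. Under this map $|v'-v_*| = |v-v_*|\cos(\theta/2)$, and the new deflection angle satisfies $\theta' = \theta/2$, which is why integration over $\theta\in[0,\pi/2]$ appears in the statement.

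Redoing the computation of Lemma \ref{thm:ch}~(1) with a general kernel $B(|z|,\cos\theta)$ in place of $|z|^\gamma b$ gives
\[
\int B\,f(v')\,dv\,d\sigma = \int_{\R^3} f(v)\int_{\S^2}\frac{1}{\cos^3(\theta/2)}\,B\!\left(\frac{|v-v_*|}{\cos(\theta/2)},\cos\theta\right)d\sigma\,dv.
\]
Here the factor $\cos^{-3}(\theta/2)$ comes from the Jacobian, and the rescaled first argument of $B$ comes from expressing the old relative speed in terms of the new one. I expect this step to be the main obstacle, because the Jacobian and the $\sigma$-reparametrization have to be tracked carefully. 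Concretely, I would parametrize $\sigma$ by polar angles around the axis $(v-v_*)/|v-v_*|$ and verify that the angular measure transforms so that the $\theta$-density $\sin\theta\,d\theta$ is recovered on $[0,\pi/2]$. The case $\gamma$-homogeneous is already recorded in Lemma \ref{thm:ch}, and the general case follows by the same computation.

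Subtracting the loss piece $\int f(v)\int_{\S^2}B(|v-v_*|,\cos\theta)\,d\sigma\,dv$, the integrand depends only on $v-v_*$. Integrating out the azimuthal angle contributes the factor $|\S^1|\sin\theta\,d\theta$. The result is $\int f(v)\,S(v-v_*)\,dv = (f*S)(v_*)$, using that $S$ is radial and hence even, with $S$ exactly as stated.
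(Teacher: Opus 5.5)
The paper does not prove this lemma; it only cites it. Your route is the standard one for it: freeze $v_*$, change variables $v\mapsto v'$ at fixed $\sigma$ (Jacobian $\frac14\cos^2(\theta/2)$, with $|v'-v_*|=|v-v_*|\cos(\theta/2)$), then re-parametrize $\sigma$. Your displayed gain identity with $\int_{\mathbb{S}^2}$ is correct. The gap is in where the upper limit $\pi/2$ comes from.

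The angle $\theta'=\theta/2\in[0,\pi/2)$ is the polar angle of $\sigma$ about the new axis $v'-v_*$. After the change of variables the angular integral is $2\pi\int_0^{\pi/2}\frac{4\sin\theta'}{\cos^2\theta'}\,B\bigl(\frac{|z|}{\cos\theta'},\cos 2\theta'\bigr)\,d\theta'$. The substitution $\theta=2\theta'$ is what produces the stated integrand $\sin\theta\,\cos^{-3}(\theta/2)\,B\bigl(|z|/\cos(\theta/2),\cos\theta\bigr)$, and it maps $[0,\pi/2)$ onto $[0,\pi)$. The loss term $\int_{\mathbb{S}^2}B(|z|,\cos\theta)\,d\sigma$ likewise runs over $\theta\in[0,\pi]$.

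So your computation, done correctly, recovers the density $\sin\theta\,d\theta$ on $[0,\pi]$, not $[0,\pi/2]$, and gives $S$ with $\int_0^{\pi}$. The $\pi/2$ in the statement reflects the standing convention, under which this lemma is usually stated, that $B(z,\sigma)=0$ whenever $z\cdot\sigma<0$. You must invoke that hypothesis explicitly. Attributing $\pi/2$ to $\theta'=\theta/2$ conflates the new angle with the original one.

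This is not cosmetic, because it also breaks your claim that splitting into gain and loss is legitimate in the cutoff case. The dangerous endpoint is $\theta\to\pi$, not $\theta\to 0$. There $v'\to v_*$ for every $v$, and $\sin\theta\,\cos^{-3-\gamma}(\theta/2)\sim 2^{3+\gamma}(\pi-\theta)^{-2-\gamma}$ is not integrable.

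Concretely, take $b\equiv 1$ on the whole sphere, which is this paper's kernel, and any $0\le f\not\equiv 0$. Tonelli gives $\int B\,f(v')\,dv\,d\sigma=2\pi\int f(v)\,|v-v_*|^{\gamma}\int_0^{\pi}\sin\theta\,\cos^{-3-\gamma}(\theta/2)\,d\theta\,dv=+\infty$, while the loss term is finite. So the identity with the stated, finite $S$ cannot hold without the support hypothesis, and your truncation $\theta\ge\eta$ does nothing about it.

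Once $B$ is supported in $\theta\le\pi/2$, you have $\cos(\theta/2)\ge 2^{-1/2}$. Then both pieces are finite for integrable $b$ and $f\in L^1_\gamma$, and your computation yields exactly the stated $S$. Without that hypothesis, the correct identity is the $[0,\pi]$ version, for kernels that vanish fast enough at $\theta=\pi$. That is in fact the form the paper uses in the proof of Theorem~\ref{thm:Qfgg}, where the factor $\cos^{2k}(\theta/2)$ tames the endpoint.
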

\begin{lem}\label{thm:E}
	(\cite{CHJ} Lemma 2.1) We have the following two decompositions about $\left\langle v^{\prime}\right\rangle^2$ :

	- Let $\mathbf{h}:=\frac{v+v_*}{\left|v+v_*\right|}, \mathbf{n}:=\frac{v-v_*}{\left|v-v_*\right|}, \mathbf{j}:=\frac{\mathbf{h}-(\mathbf{h} \cdot \mathbf{n}) \mathbf{n}}{\sqrt{1-(\mathbf{h} \cdot \mathbf{n})^2}}, E(\theta):=\langle v\rangle^2 \cos ^2(\theta / 2)+\left\langle v_*\right\rangle^2 \sin ^2(\theta / 2)$ and $\sigma=\cos \theta \, \mathbf{n}+\sin \theta\, \hat{\omega}$ with $\hat{\omega} \in \mathbb{S}^1(\mathbf{n}) :=\left\{\hat{\omega} \in \mathbb{S}^2 \mid \hat{\omega} \perp \mathbf{n}\right\}$. Then $\mathbf{h} \cdot \sigma=(\mathbf{h} \cdot \mathbf{n}) \cos \theta+\sqrt{1-(\mathbf{h} \cdot \mathbf{n})^2} \sin \theta\, (\mathbf{j} \cdot \hat{\omega})$ and
	$$
	\left\langle v^{\prime}\right\rangle^2=E(\theta)+\sin \theta\, (\mathbf{j} \cdot  \hat{\omega})\, \tilde{h}
	$$
	
	with $\tilde{h}:=\frac{1}{2} \sqrt{\left|v+v_*\right|^2\left|v-v_*\right|^2-\left(v+v_*\right)^2 \cdot\left(v-v_*\right)^2}=\sqrt{|v|^2\left|v_*\right|^2-\left(v \cdot v_*\right)^2}$.

	- If $\omega=\frac{\sigma-(\sigma \cdot \mathbf{n}) \mathbf{n}}{|\sigma-(\sigma \cdot \mathbf{n}) \mathbf{n}|}$ (which implies $\omega \perp\left(v-v_*\right))$, then
	$$
	\left\langle v^{\prime}\right\rangle^2=E(\theta)+\sin \theta\left|v-v_*\right| v \cdot \omega, \quad \text { which implies } \quad \sin \theta\left|v-v_*\right| v \cdot \omega=\sin \theta \, (\mathbf{j} \cdot \hat{\omega})\, \tilde{h}
	$$
	We also have $\omega=\tilde{\omega} \cos \frac{\theta}{2}+\frac{v^{\prime}-v_*}{\left|v^{\prime}-v_*\right|} \sin \frac{\theta}{2}$, where $\tilde{\omega}=\left(v^{\prime}-v\right) /\left|v^{\prime}-v\right|$.

	Moreover, if $k \geq 2, l_k:=[(k+1) / 2]$, then

\ben\label{eq:Et}
\begin{aligned}
& \sum_{p=1}^{l_{k / 2}-1} \frac{\Gamma(k / 2+1)}{\Gamma(p+1) \Gamma(k / 2+1-p)}\left[\left(\langle v\rangle^2 \cos ^2(\theta / 2)\right)^p\left(\left\langle v_*\right\rangle^2 \sin ^2(\theta / 2)\right)^{k / 2-p}\right. \\
& \left.+\left(\langle v\rangle^2 \cos ^2(\theta / 2)\right)^{k / 2-p}\left(\left\langle v_*\right\rangle^2 \sin ^2(\theta / 2)\right)^p\right]+\left(\cos ^2(\theta / 2)\right)^{k / 2}\langle v\rangle^k+\left(\sin ^2(\theta / 2)\right)^{k / 2}\left\langle v_*\right\rangle^k \\
\leq & (E(\theta))^{k / 2} \leq \sum_{p=1}^{l_{k / 2}} \frac{\Gamma(k / 2+1)}{\Gamma(p+1) \Gamma(k / 2+1-p)}\left[\left(\langle v\rangle^2 \cos ^2(\theta / 2)\right)^p\left(\left\langle v_*\right\rangle^2 \sin ^2(\theta / 2)\right)^{k / 2-p}\right. \\
& \left.+\left(\langle v\rangle^2 \cos ^2(\theta / 2)\right)^{k / 2-p}\left(\left\langle v_*\right\rangle^2 \sin ^2(\theta / 2)\right)^p\right]+\left(\cos ^2(\theta / 2)\right)^{k / 2}\langle v\rangle^k+\left(\sin ^2(\theta / 2)\right)^{k / 2}\left\langle v_*\right\rangle^k
\end{aligned}
\een
where $\Gamma$ denotes the Gamma function. And
\ben\label{eq:GmGm}
\sum_{p=1}^{l_{k / 2}} \frac{\Gamma(k / 2+1)}{\Gamma(p+1) \Gamma(k / 2+1-p)} \frac{\Gamma(p-s) \Gamma(k / 2-p-s)}{\Gamma(k / 2-2 s)} \lesssim k^s.
\een
\end{lem}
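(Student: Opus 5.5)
The plan is to verify the two decompositions of $\langle v'\rangle^2$ by direct algebra in the orthonormal frame adapted to $\mathbf n$, and then to obtain the two inequalities \eqref{eq:Et} and \eqref{eq:GmGm} from a sharp binomial inequality for non-integer powers and from Stirling asymptotics of Beta functions.

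\emph{Step 1: the identity for $\langle v'\rangle^2$.} From $v'=\frac{v+v_*}{2}+\frac{|v-v_*|}{2}\sigma$ I expand
\[
|v'|^2=\tfrac14|v+v_*|^2+\tfrac14|v-v_*|^2+\tfrac12|v-v_*|\,(v+v_*)\cdot\sigma .
\]
I then use three elementary facts.
\begin{enumerate}[label=(\alph*)]
\item The parallelogram law gives $\frac14|v+v_*|^2+\frac14|v-v_*|^2=\frac12(|v|^2+|v_*|^2)$.
\item Writing $\sigma=\cos\theta\,\mathbf n+\sin\theta\,\hat\omega$, the identity $|v-v_*|\,(v+v_*)\cdot\mathbf n=|v|^2-|v_*|^2$ holds.
\item $\frac12(1+\cos\theta)=\cos^2(\theta/2)$ and $\frac12(1-\cos\theta)=\sin^2(\theta/2)$.
\end{enumerate}
Adding $1=\cos^2(\theta/2)+\sin^2(\theta/2)$ to both sides then gives $\langle v'\rangle^2=E(\theta)+\frac12\sin\theta\,|v-v_*|\,(v+v_*)\cdot\hat\omega$.

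For the first form of the remainder, I decompose $\mathbf h=(\mathbf h\cdot\mathbf n)\mathbf n+\sqrt{1-(\mathbf h\cdot\mathbf n)^2}\,\mathbf j$ with $\mathbf j\perp\mathbf n$. This yields the stated formula for $\mathbf h\cdot\sigma$. It also gives $(v+v_*)\cdot\hat\omega=|v+v_*|\sqrt{1-(\mathbf h\cdot\mathbf n)^2}\,(\mathbf j\cdot\hat\omega)$. Lagrange's identity then shows $\frac12|v+v_*||v-v_*|\sqrt{1-(\mathbf h\cdot\mathbf n)^2}=\tilde h=\sqrt{|v|^2|v_*|^2-(v\cdot v_*)^2}$.

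For the second form, note that $\omega=\hat\omega$ when $\sin\theta>0$. Since $(v-v_*)\cdot\omega=0$, we get $(v+v_*)\cdot\omega=2v\cdot\omega$, which gives the $\omega$-version and the equality of the two remainders. The formula $\omega=\tilde\omega\cos\frac\theta2+\frac{v'-v_*}{|v'-v_*|}\sin\frac\theta2$ is plane geometry. The vectors $v'-v$ and $v'-v_*$ are orthogonal and make angles $\frac{\pi-\theta}{2}$ and $\frac\theta2$ with $\mathbf n$. Projecting onto $\mathbf n^\perp$ inside the plane spanned by $\mathbf n$ and $\sigma$ gives the claim.

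\emph{Step 2: the bounds \eqref{eq:Et}.} Set $a=\langle v\rangle^2\cos^2(\theta/2)$, $b=\langle v_*\rangle^2\sin^2(\theta/2)$ and $q=k/2\ge1$, so that $E(\theta)^{k/2}=(a+b)^q$. I would invoke, or reprove, the Bobylev--Gamba--Panferov binomial inequality for real $q>1$:
\[
\sum_{p=1}^{l_q-1}\tbinom qp\bigl(a^pb^{q-p}+a^{q-p}b^p\bigr)\le(a+b)^q-a^q-b^q\le\sum_{p=1}^{l_q}\tbinom qp\bigl(a^pb^{q-p}+a^{q-p}b^p\bigr),
\]
with $\binom qp=\frac{\Gamma(q+1)}{\Gamma(p+1)\Gamma(q+1-p)}$. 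To prove it, by symmetry and homogeneity I reduce to $b=1$ and $a=x\in[0,1]$. I expand $(1+x)^q$ as a binomial series in $x$. Because the generalized binomial coefficients alternate in sign beyond $p>q$, truncating the series gives the two-sided control. I then pair each term $x^p$ with the mirror term $x^{q-p}$, using $x^p\ge x^{q-p}$ when $p\le q/2$. This is where the cutoff $l_q=[(q+1)/2]$ comes from. This step is the main obstacle, since keeping track of signs and of the exact cutoff for non-integer $q$ is delicate. Given that the lemma is quoted from \cite{CHJ}, I would follow their argument, which in turn relies on Bobylev--Gamba--Panferov.

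\emph{Step 3: the estimate \eqref{eq:GmGm}.} Write $K=k/2$ and $B(x,y)=\Gamma(x)\Gamma(y)/\Gamma(x+y)$. Each summand equals
\[
\frac{\Gamma(K+1)}{\Gamma(K-2s)}\cdot\frac{\Gamma(p-s)}{\Gamma(p+1)}\cdot\frac{\Gamma(K-p-s)}{\Gamma(K-p+1)}.
\]
Stirling's formula gives $\Gamma(x+\alpha)/\Gamma(x+\beta)\sim x^{\alpha-\beta}$, so the summand is bounded by a multiple of $K^{1+2s}\,p^{-1-s}\,(K-p)^{-1-s}$. Since $p\le l_{K}\le (K+1)/2$, we have $K-p\gtrsim K$. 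Summing $p^{-1-s}$ over $p\ge1$ converges because $s>0$. The total is therefore $\lesssim K^{1+2s}K^{-1-s}=K^s\lesssim k^s$, with uniform constants once $K-p-s$ stays bounded away from $0$, which holds for $k$ large relative to $s$.
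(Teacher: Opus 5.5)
Your argument is correct in substance. Note that the paper gives no proof of this lemma: it imports it wholesale from \cite{CHJ}. Your proposal is therefore a self-contained substitute in which only the Bobylev--Gamba--Panferov binomial inequality is still taken from the literature.

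Step~1 is a complete verification, with one slip in the last geometric claim. Since $\tilde\omega=\frac{\sigma-\mathbf n}{2\sin(\theta/2)}=-\sin\frac\theta2\,\mathbf n+\cos\frac\theta2\,\omega$, the angle between $\tilde\omega$ and $\mathbf n$ is $\frac{\pi+\theta}{2}$, not $\frac{\pi-\theta}{2}$. The clean way to finish is to observe that $\tilde\omega$ and $\frac{v'-v_*}{|v'-v_*|}=\frac{\sigma+\mathbf n}{2\cos(\theta/2)}=\cos\frac\theta2\,\mathbf n+\sin\frac\theta2\,\omega$ form an orthonormal basis of $\mathrm{span}\{\mathbf n,\omega\}$. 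Then read off $\omega\cdot\tilde\omega=\cos\frac\theta2$ and $\omega\cdot\frac{v'-v_*}{|v'-v_*|}=\sin\frac\theta2$.

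Step~3 buys something the citation hides. It shows that \eqref{eq:GmGm} needs $0<s<1$ and $k$ large enough that all Gamma arguments are positive (e.g.\ $k>2+4s$); these hypotheses are missing from the statement as transcribed. It also shows the bound genuinely fails at $s=0$: there, with $K=k/2$, each summand equals $\frac{K}{p(K-p)}=\frac1p+\frac1{K-p}$, so the sum is $\sim\log k$. State these assumptions explicitly.

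In Step~2, invoking Bobylev--Gamba--Panferov is legitimate, but the reproof you sketch does not work as written. For non-integer $q=n+\alpha$ with $\alpha\in(0,1)$, the binomial series contains only integer powers, so there is no series term $x^{q-p}$ to pair with $x^p$. The inequality $x^p\ge x^{q-p}$ relates exponents that are far apart and is not what is needed. Monotonicity has to be applied between $x^j$ and $x^{q-p}$ with $|j-(q-p)|<1$. The real content is a coefficient comparison using the symmetry $C(t)=C(q-t)$ and the unimodality about $q/2$ of $C(t)=\Gamma(q+1)/(\Gamma(t+1)\Gamma(q+1-t))$. Concretely, for $x\in[0,1]$:

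\emph{Upper bound.} Use $(1+x)^q\le\sum_{j\le n+1}\binom{q}{j}x^j$. Send each $j\in[l_q+1,n]$ to $p=n+1-j\in[1,l_q]$, and use $x^j\le x^{q-p}$ together with $\binom{q}{j}=C(j)\le C(j-1+\alpha)=\binom{q}{p}$. The leftover $\binom{q}{n+1}x^{n+1}-x^q$ is $\le 0$ because $\binom{q}{n+1}<1$.

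\emph{Lower bound.} Use $(1+x)^q\ge\sum_{j\le n+2}\binom{q}{j}x^j$. Send each $p\in[1,l_q-1]$ to $j=n-p\in[l_q,n-1]$, and use $x^{q-p}\le x^j$ together with $\binom{q}{p}=C(j+\alpha)\le C(j)=\binom{q}{j}$. The leftover $\binom{q}{n}x^n-x^q$ is $\ge0$ because $\binom{q}{n}\ge1$. The leftover $\binom{q}{n+1}x^{n+1}+\binom{q}{n+2}x^{n+2}$ is $\ge0$ because the tail alternates with decreasing terms.

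The cutoff $l_q$ satisfies $2l_q\in\{n,n+1\}$, and that is exactly what makes these index maps land in the right ranges. The integer case follows directly from $\binom{q}{p}=\binom{q}{q-p}$.
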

\begin{rmk}
We make an observation regarding the term $\left|v-v_*\right| v \cdot \omega= (\mathbf{j} \cdot \hat{\omega})\, \tilde{h}$: since $|\omega|=1$ with $\omega \perp\left(v-v_*\right)$, we have $v \cdot \omega=v_* \cdot \omega$ and $|v \cdot \omega|\le |v|$, $|v_* \cdot \omega|\le |v_*|$, hence $|v \cdot \omega|\le \min\{|v|,|v_*|\}$. Using $|v-v_*|\le 2\max\{|v|,|v_*|\}$, we obtain
\ben\label{eq:vw}
|(\mathbf{j} \cdot \hat{\omega})\, \tilde{h}|=\left|v-v_*\right| |v \cdot \omega|\le 2\min\{|v|,|v_*|\}\max\{|v|,|v_*|\}=2|v||v_*|.
\een
Also, using $\sin\theta=2\sin(\theta/2)\cos(\theta/2)$ and the definition of $E(\theta)$, we have
\ben\label{eq:vwEt}
|\langle v^{\prime}\rangle^2-E(\theta)|=|\sin \theta \, (\mathbf{j} \cdot \hat{\omega})\, \tilde{h}|\le 4\sin(\theta/2)\cos(\theta/2)|v||v_*|\le 2(|v|^2 \cos ^2(\theta / 2)+| v_*|^2 \sin ^2(\theta / 2))\le 2E(\theta).
\een
\end{rmk}
\begin{lem}\label{thm:bv} For smooth functions $f, g, h$, if $\gamma\ge 0$ and $b \in L^1([0,1])$, then
	$$
	\begin{aligned}
	& \int_{\mathbb{R}^6 \times \mathbb{S}^2} b(\cos \theta)\left|v-v_*\right|^\gamma f_* g h^{\prime}\, d v\, d v_*\, d \sigma \ls \int_{\mathbb{S}^2} b(\cos \theta) \sin ^{-3 / 2-\gamma / 2} \frac{\theta}{2}\, d \sigma\,\|g\|_{L^1_\gamma}\|f\|_{L^2_{\gamma/2}}\|h\|_{L^2_{\gamma/2}}, \\
	& \int_{\mathbb{R}^6 \times \mathbb{S}^2} b(\cos \theta)\left|v-v_*\right|^\gamma f_* g h^{\prime} \,d v\, d v_*\, d \sigma \ls \int_{\mathbb{S}^2} b(\cos \theta) \cos ^{-3 / 2-\gamma / 2} \frac{\theta}{2}\, d \sigma\,\|f\|_{L^1_\gamma}\|g\|_{L^2_{\gamma/2}}\|h\|_{L^2_{\gamma/2}}.
	\end{aligned}
	$$
\end{lem}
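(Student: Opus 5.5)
The strategy is to use the singular change of variables of Lemma \ref{thm:ch}(2) for the first bound and the regular change of variables of Lemma \ref{thm:ch}(1) for the second, combined with a Cauchy--Schwarz splitting in which the $L^1_\gamma$ factor is the function not involved in the change of variables. Since $\gamma\ge0$, we have $|v-v_*|^\gamma\lesssim\langle v\rangle^\gamma\langle v_*\rangle^\gamma$. This lets us distribute the kinetic factor, up to constants and after possibly recomputing at post-collisional points, as
\[
|v-v_*|^\gamma\lesssim \langle v_*\rangle^{\gamma/2}\langle v'\rangle^{\gamma/2}\langle v\rangle^{\gamma}
\quad\text{or}\quad
|v-v_*|^\gamma\lesssim \langle v\rangle^{\gamma/2}\langle v'\rangle^{\gamma/2}\langle v_*\rangle^{\gamma}.
\]
These follow from $|v-v_*|=|v'-v_*'|$ and $\langle v'\rangle\lesssim\langle v\rangle+\langle v_*\rangle$. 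A cleaner route is to keep $|v-v_*|^\gamma$ intact and apply Cauchy--Schwarz in a form where the kernel is absorbed by the change of variables.

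For the first inequality, fix $v$ and write the integral as
\[
\int g(v)\Big(\int\!\!\int b\,|v-v_*|^\gamma f(v_*)h(v')\,d\sigma\,dv_*\Big)dv .
\]
Apply Cauchy--Schwarz in $(v_*,\sigma)$ with weight $b|v-v_*|^\gamma$:
\[
\int\!\!\int b|v-v_*|^\gamma |f_*||h'|
\le\Big(\int\!\!\int b|v-v_*|^\gamma \sin^{-3/2-\gamma/2}\tfrac\theta2\, f_*^2\Big)^{1/2}
\Big(\int\!\!\int b|v-v_*|^\gamma \sin^{3/2+\gamma/2}\tfrac\theta2\, h'^2\Big)^{1/2}.
\]
Use Lemma \ref{thm:ch}(2) on the second factor. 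Since $|v-v'|=|v-v_*|\sin(\theta/2)$, the kernel in the new variable is unchanged up to that factor. The $\sin^{3+\gamma}$ Jacobian cancels the weight exactly, giving
\[
\int\!\!\int b\,\sin^{-3/2-\gamma/2}\tfrac\theta2\,|v-u|^\gamma h(u)^2\,du\,d\sigma .
\]
Both factors are then bounded by
\[
\Big(\int b\,\sin^{-3/2-\gamma/2}\tfrac\theta2\,d\sigma\Big)\,\langle v\rangle^\gamma\,\|\cdot\|^2_{L^2_{\gamma/2}},
\]
using $|v-u|^\gamma\le\langle v\rangle^\gamma\langle u\rangle^\gamma$. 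Multiplying, the product is
\[
\Big(\int b\,\sin^{-3/2-\gamma/2}\tfrac\theta2\,d\sigma\Big)\langle v\rangle^\gamma\|f\|_{L^2_{\gamma/2}}\|h\|_{L^2_{\gamma/2}},
\]
and integrating against $|g(v)|\,dv$ yields the first bound. I expect this exponent bookkeeping to be the main obstacle: the powers of $\sin(\theta/2)$ must be chosen so that the Jacobian cancels precisely, leaving a single angular integral. The half-and-half split $\sin^{\mp(3+\gamma)/2}$ is what achieves this, since the change of variables contributes $\sin^{-(3+\gamma)}$ times the weight $\sin^{\gamma}$ coming from $|v-v'|^\gamma$ versus $|v-v_*|^\gamma$.

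The second inequality is symmetric. Fix $v_*$, treat $f_*$ as the outer $L^1_\gamma$ factor, and apply Cauchy--Schwarz in $(v,\sigma)$ with weights $\cos^{\mp(3/2+\gamma/2)}(\theta/2)$. Lemma \ref{thm:ch}(1) then handles $h(v')$, using $|v'-v_*|=|v-v_*|\cos(\theta/2)$. This gives
\[
\Big(\int b\,\cos^{-3/2-\gamma/2}\tfrac\theta2\,d\sigma\Big)\langle v_*\rangle^\gamma\|g\|_{L^2_{\gamma/2}}\|h\|_{L^2_{\gamma/2}},
\]
and integrating in $v_*$ against $|f_*|$ completes the proof.
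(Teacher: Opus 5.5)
Your argument is correct and is essentially the paper's proof: Cauchy--Schwarz with the weights $\sin^{\mp(3/2+\gamma/2)}(\theta/2)$ (resp.\ $\cos^{\mp(3/2+\gamma/2)}(\theta/2)$), then Lemma \ref{thm:ch}(2) (resp.\ (1)) on the $h'$ factor after absorbing $\sin^{3/2+\gamma/2}(\theta/2)$ into $b$, then $|v-v_*|^\gamma\le\langle v\rangle^\gamma\langle v_*\rangle^\gamma$; the only cosmetic difference is that you apply Cauchy--Schwarz for fixed $v$ and then integrate against $|g|$, whereas the paper splits $|g|$ globally between the two factors. Delete your first paragraph, because its displayed bounds are false (with $v=0$ and $\theta=0$ you get $v'=v$, so the first one would say $|v_*|^\gamma\lesssim\langle v_*\rangle^{\gamma/2}$), and note that Lemma \ref{thm:ch}(2) already contains the $\sin^{-\gamma}(\theta/2)$ coming from the kernel, so no separate $\sin^{\gamma}$ factor should be tracked.
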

\begin{proof}
We prove only the first inequality; the second follows by the same argument. Using the change of variables in Lemma \ref{thm:ch}, we obtain
\beq
\ba
& \int_{\mathbb{R}^6 \times \mathbb{S}^2} b(\cos \theta)\left|v-v_*\right|^\gamma f_* g h^{\prime} \,d v\, d v_*\, d \sigma \leq\left(\int_{\mathbb{R}^6 \times \mathbb{S}^2} b(\cos \theta) \sin ^{-\frac{3}{2}-\frac{\gamma}{2}} \frac{\theta}{2}\left|v-v_*\right|^\gamma \left|f_*\right|^2|g| \,d v\, d v_*\, d \sigma\right)^{\frac{1}{2}}\\&\times \left(\int_{\mathbb{R}^6 \times \mathbb{S}^2}|g| \, b(\cos \theta) \sin ^{\frac{3}{2}+\frac{\gamma}{2}} \frac{\theta}{2}\left|v-v_*\right|^\gamma | h^{\prime}|^2\, d v\, d v_*\, d \sigma\right)^{\frac{1}{2}} \\ &\ls \left(\int_{\mathbb{S}^2} b(\cos \theta) \sin ^{-\frac{3}{2}-\frac{\gamma}{2}} \frac{\theta}{2} d \sigma\right)\|g\|_{L_{\gamma}^1}\|f\|_{L^2_{\gamma / 2}}\|h\|_{L^2_{\gamma / 2}},
\ea
\eeq
using the fact that $\left|v-v_*\right|^\gamma\ls \wei^\gamma\weis^\gamma$ when $\gamma\ge 0$. This completes the proof of the lemma.
\end{proof}
\begin{rmk}
Using the fact that $\left|v-v_*\right|^\gamma\ls \wei^\gamma\weis^\gamma$ and the lemma above, we also have
\ben\label{eq:Kfgh1}
\int_{\mathbb{R}^6 \times \mathbb{S}^2} b(\cos \theta)\left|v-v_*\right|^\gamma f_* g h^{\prime}\, d v\, d v_*\, d \sigma \ls \int_{\mathbb{S}^2} b(\cos \theta) \sin ^{-3 / 2} \frac{\theta}{2}\, d \sigma\,\|g\|_{L^1_\gamma}\|f\|_{L^2_{\gamma}}\|h\|_{L^2},
\een
\ben\label{eq:Kfgh2}
\int_{\mathbb{R}^6 \times \mathbb{S}^2} b(\cos \theta)\left|v-v_*\right|^\gamma f_* g h^{\prime}\, d v\, d v_*\, d \sigma \ls \int_{\mathbb{S}^2} b(\cos \theta) \cos ^{-3 / 2} \frac{\theta}{2}\, d \sigma\,\|f\|_{L^1_\gamma}\|g\|_{L^2_{\gamma}}\|h\|_{L^2}.
\een
\end{rmk}
\begin{lem}
For $p,q>-1$,
\beq
\int_{\S^2}\cos^{p}(\theta/2)\sin^q(\theta/2)\, d\sigma=2\pi B\rr{\f{p+1}{2},\f{q+1}{2}}.
\eeq
\end{lem}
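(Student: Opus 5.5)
Since $\sigma\in\S^2$ and $\theta$ is the polar angle measured from the fixed axis $\mathbf{n}$, I would pass to spherical coordinates adapted to that axis: $d\sigma=\sin\theta\,d\theta\,d\phi$ with $\theta\in[0,\pi]$, $\phi\in[0,2\pi)$. The integrand does not depend on $\phi$, so integrating it out gives
\[
\int_{\S^2}\cos^{p}(\theta/2)\sin^q(\theta/2)\,d\sigma=2\pi\int_0^\pi \cos^{p}(\theta/2)\sin^q(\theta/2)\sin\theta\,d\theta .
\]

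Next I write $\sin\theta=2\sin(\theta/2)\cos(\theta/2)$ and substitute $u=\theta/2\in[0,\pi/2]$, $d\theta=2\,du$. The integral becomes
\[
2\pi\cdot 4\int_0^{\pi/2}\cos^{p+1}u\,\sin^{q+1}u\,du .
\]
I then apply the classical identity $\int_0^{\pi/2}\cos^{2a-1}u\sin^{2b-1}u\,du=\tfrac12 B(a,b)$, valid for $a,b>0$. Here $2a-1=p+1$ and $2b-1=q+1$, so $a=\f{p}{2}+1$ and $b=\f{q}{2}+1$. This yields $4\pi B\big(\f{p+2}{2},\f{q+2}{2}\big)$.

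This does not match the stated value $2\pi B\big(\f{p+1}{2},\f{q+1}{2}\big)$, so the main obstacle is the normalization convention. The stated formula is exactly what one gets from $2\pi\int_0^\pi\cos^p(\theta/2)\sin^q(\theta/2)\,d\theta$, i.e. with the Jacobian $\sin\theta$ omitted. Indeed, with $u=\theta/2$ that integral equals $4\pi\int_0^{\pi/2}\cos^pu\sin^qu\,du=2\pi B\big(\f{p+1}{2},\f{q+1}{2}\big)$. The hypothesis $p,q>-1$ is precisely the integrability condition for this version, whereas the true surface measure would only need $p,q>-2$.

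My plan is therefore to verify both computations above, check the Beta identity via the substitution $t=\sin^2u$, and determine which convention the paper actually uses. If $d\sigma$ is the genuine surface measure, the formula should be corrected to $4\pi B\big(\f{p}{2}+1,\f{q}{2}+1\big)$, and finiteness holds under the weaker hypothesis $p,q>-2$. For the later applications only finiteness matters, e.g. the factors $\int_{\S^2}\sin^{-3/2-\gamma/2}(\theta/2)\,d\sigma<\infty$ appearing in Lemma \ref{thm:bv}. Both versions give finiteness for the exponents $-2<-\f32-\f{\gamma}{2}$ (and $>-1$ requires $\gamma<-1$, so the true-measure version is the one that makes those uses valid). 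I would note this discrepancy explicitly.
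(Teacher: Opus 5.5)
Your computation is correct, and so is your diagnosis. The paper states this lemma without proof, so there is no argument of theirs to compare with. As written, the identity is false for the surface measure $d\sigma$ on $\mathbb{S}^2$, and the correct value is $4\pi B\big(\frac p2+1,\frac q2+1\big)$, exactly as you found. The displayed right-hand side equals $2\pi\int_0^\pi\cos^p(\theta/2)\sin^q(\theta/2)\,d\theta$. In other words, the Jacobian $\sin\theta$ has been dropped, which also explains the hypothesis $p,q>-1$.

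You do not need to ``determine the convention''. Once $d\sigma$ is the surface measure, the case $p=q=0$ settles the matter however $\theta$ is parametrized: the left side is $|\mathbb{S}^2|=4\pi$, while the right side is $2\pi B(\frac12,\frac12)=2\pi^2$. The paper's $d\sigma$ is the usual surface measure, as the normalization $B=\frac1{4\pi}$ in the Maxwellian test confirms.

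A more useful repair than replacing the formula comes from $0\le\sin\theta\le1$ on $[0,\pi]$. For $p,q>-1$,
\[
\int_{\mathbb{S}^2}\cos^{p}(\theta/2)\sin^q(\theta/2)\,d\sigma
=2\pi\int_0^\pi \cos^{p}(\theta/2)\sin^q(\theta/2)\sin\theta\,d\theta
\le 2\pi B\Big(\frac{p+1}{2},\frac{q+1}{2}\Big),
\]
that is, $4\pi B(\frac p2+1,\frac q2+1)\le 2\pi B(\frac{p+1}2,\frac{q+1}2)$. So the lemma holds with $\le$ in place of $=$. Every place the appendix invokes it needs only an upper bound: the $\frac1{\sqrt k}$ factors in $I_{11},I_{13}$, and the Beta sums in $I_{12},I_{21},I_{22}$ controlled by \eqref{eq:GmGm}. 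Nothing downstream has to be redone. With the true measure these factors are in fact smaller. For example, $\int_{\mathbb{S}^2}\cos^{a}(\theta/2)\,d\sigma=\frac{8\pi}{a+2}=O(1/k)$ for $a=k-\frac32-\frac\gamma2$, so the paper's ``$\sim\frac1{\sqrt k}$'' holds only as an upper bound.

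Two of your side remarks need correcting.
\begin{enumerate}
\item It is not true that only finiteness matters downstream. The explicit $k$-dependence of these values produces the small constant $C_1/\sqrt k$ that fixes $k=k(\mathsf{H})$; the inequality above preserves it.
\item Your sentence ``both versions give finiteness for $-2<-\frac32-\frac\gamma2$'' contradicts its own parenthetical. Moreover, $-\frac32-\frac\gamma2>-2$ holds only when $\gamma<1$. At $\gamma=1$ the bare factor $\int_{\mathbb{S}^2}\sin^{-2}(\theta/2)\,d\sigma$ in Lemma \ref{thm:bv} diverges even for the true measure. The paper's actual uses of that lemma always carry a compensating factor such as $\sin^{k}(\theta/2)$ or $\sin^2\theta$, and that factor is what keeps them finite.
\end{enumerate}
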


This yields the following estimates:
\begin{proof}[Proof of Theorem \ref{thm:Qfgh} (1)]
	We use the following decomposition: for any functions $f,g,h$ and $k>0$, we introduce $E(\theta)$ from Lemma \ref{thm:E} and apply the well-known pre/post-collisional change of variables $\left(v, v_*, \sigma\right) \rightarrow\left(v^{\prime}, v_*^{\prime},\left(v-v_*\right) /\left|v-v_*\right|\right)$ for $Q^+$; we obtain
	\beq
	\ba
		&\br{Q(f, g), h\wei^{2 k}}=  \int_{\mathbb{R}^6 \times \mathbb{S}^2} \left|v-v_*\right|^\gamma f_*g h^{\prime}\left\langle v^{\prime}\right\rangle^k E(\theta)^{k / 2}\, d v\, d v_*\, d \sigma \\
		& +\int_{\mathbb{R}^6 \times \mathbb{S}^2} \left|v-v_*\right|^\gamma f_* g h^{\prime}\left\langle v^{\prime}\right\rangle^k\left(\left\langle v^{\prime}\right\rangle^k-E(\theta)^{k / 2}\right)\, d v\, d v_*\, d \sigma\\
		&-\int_{\mathbb{R}^6 \times \mathbb{S}^2} \left|v-v_*\right|^\gamma f_*gh\langle v\rangle^{2 k}\, d v\, d v_*\, d \sigma=: I_1+I_2+I_3.
\ea
\eeq

\underline{\it{Estimate of $I_1$.} } For the first term $I_1$, we decompose $(E(\theta))^{k/2}$ using \eqref{eq:Et} and obtain
\beq
\ba
|I_1|&\le \int_{\mathbb{R}^6 \times \mathbb{S}^2} \left|v-v_*\right|^\gamma |f_*|\, |g|\, |h^{\prime}|\left\langle v^{\prime}\right\rangle^k E(\theta)^{k / 2}\, d v\, d v_*\, d \sigma\\
&\le \int_{\R^6\times \S^2}|v-v_*|^\gamma |f_*|\, |g|\, |h'|\br{v'}^k\wei^k\cos^k(\theta/2)\,dv\, dv_*\,d\sigma\\
&+\sum_{p=1}^{l_{k / 2}} \frac{\Gamma(k / 2+1)}{\Gamma(p+1) \Gamma(k / 2+1-p)} \int_{\mathbb{R}^6 \times \mathbb{S}^2} \left|v-v_*\right|^\gamma |f_*|\, |g|\, |h'|\left\langle v^{\prime}\right\rangle^k\\
&\left[\left(\langle v\rangle^2 \cos ^2(\theta / 2)\right)^p\left(\left\langle v_*\right\rangle^2 \sin ^2(\theta / 2)\right)^{k / 2-p}+\left(\langle v\rangle^2 \cos ^2(\theta / 2)\right)^{k / 2-p}\left(\left\langle v_*\right\rangle^2 \sin ^2(\theta / 2)\right)^p\right]\,dv\, dv_*\,d\sigma\\
&+\int_{\R^6\times \S^2}|v-v_*|^\gamma |f_*|\, |g|\, |h'|\br{v'}^k\weis^k\sin^k(\theta/2)\,dv\, dv_*\,d\sigma
\\&:=I_{11}+I_{12}+I_{13}.
\ea
\eeq

\underline{$\bullet$ Estimate of $I_{11},I_{13}$.} By Lemma \ref{thm:bv}, we have
\beq
\ba
I_{11}&=\int_{\R^6\times\S^2}|v-v_*|^\gamma |f_*|\, |g|\, |h'|\br{v'}^k\wei^k\cos^{k}(\theta/2)\,dv\, dv_*\,d\sigma\\
&=\int_{\R^6\times\S^2}|v-v_*|^\gamma |f_*|\, (|g|\wei^k)( |h'|\br{v'}^k)\cos^{k}(\theta/2)\,dv\, dv_*\,d\sigma\\
&\ls \int_{\S^2}\cos^{k-3/2-\gamma/2}(\theta/2)\,d\sigma\, \|f\|_{L^1_{\gamma}}\|g\|_{L^2_{k+\gamma/2}}\|h\|_{L^2_{k+\gamma/2}}\ls \f{1}{\sqrt{k}}\|f\|_{L^1_{\gamma}}\|g\|_{L^2_{k+\gamma/2}}\|h\|_{L^2_{k+\gamma/2}}.
\ea
\eeq

Here we use the fact that $\int_{\S^2}\cos^{k-3/2-\gamma/2}(\theta/2)\,d\sigma=4\pi B(\f{k-3/2-\gamma/2+1}{2},\f12)\sim \f{1}{\sqrt{k}}$ when $k\ge 8$. Similarly, for $I_{13}$ we have
\beq
I_{13}\ls \int_{\S^2}\sin^{k-3/2-\gamma/2}(\theta/2)\,d\sigma \, \|g\|_{L^1_\gamma}\|f\|_{L^2_{k+\gamma/2}}\|h\|_{L^2_{k+\gamma/2}}\ls \f{1}{\sqrt{k}}\|g\|_{L^1_\gamma}\|f\|_{L^2_{k+\gamma/2}}\|h\|_{L^2_{k+\gamma/2}}.
\eeq

\underline{$\bullet$ Estimate of $I_{12}$.}
For $I_{12}$, for each term, applying \eqref{eq:Kfgh2} yields, for $p=1,2,\cdots, l_{k/2}$,
\beq
\ba
&\int_{\R^6\times\S^2}|v-v_*|^\gamma |f_*|\, |g|\, |h'|\left\langle v^{\prime}\right\rangle^k\wei^{k-2p}\weis^{2p}\cos^{k-2p}(\theta/2)\sin^{2p}(\theta/2)\,d\sigma \,dv \,dv_*
\\& =\int_{\R^6\times\S^2}|v-v_*|^\gamma (|f_*|\weis^{2p})(|g|\wei^{k-2p}) (|h'|\left\langle v^{\prime}\right\rangle^k)\cos^{k-2p}(\theta/2)\sin^{2p}(\theta/2)\,d\sigma \,dv \,dv_*
\\ &\ls \int \cos^{k-2p-3/2}(\theta/2)\sin^{2p}(\theta/2)\, d\sigma\, \|f\|_{L^1_{2p+\gamma}}\|g\|_{L^2_{k-2p+\gamma}}\|h\|_{L^2_{k}}
\\ &\ls  B\left( \f{k}{2}-p-\f{3}{4}+\f12,p+\f12 \right)\|f\|_{L^1_{2p+\gamma}}\|g\|_{L^2_{k-2p+\gamma}}\|h\|_{L^2_{k}}.
\ea
\eeq
Similarly, applying \eqref{eq:Kfgh1} also gives
\beq
\ba
&\int_{\R^6\times\S^2}|v-v_*|^\gamma |f_*|\, |g|\, |h'|\left\langle v^{\prime}\right\rangle^k\wei^{2p}\weis^{k-2p}\cos^{2p}(\theta/2)\sin^{k-2p}(\theta/2)\,d\sigma\, dv\, dv_*\\
&\ls B\left( p+\f12,\f{k}{2}-p-\f{3}{4}+\f12 \right)\|g\|_{L^1_{2p+\gamma}}\|f\|_{L^2_{k-2p+\gamma}}\|h\|_{L^2_{k}}.
\ea
\eeq
By \eqref{eq:GmGm}, we have 
\beq
\ba
&\sum_{p=1}^{l_{k/2}}\frac{\Gamma(k / 2+1)}{\Gamma(p+1) \Gamma(k / 2+1-p)}B\left( \f{k}{2}-p-\f{3}{4}+\f12,p+\f12 \right)\ls k,\\
&\sum_{p=1}^{l_{k/2}}\frac{\Gamma(k / 2+1)}{\Gamma(p+1) \Gamma(k / 2+1-p)}B\left( p+\f12,\f{k}{2}-p-\f{3}{4}+\f12 \right)\ls k.
\ea
\eeq
Thus, with $1\le p\le l_{k/2}\le \f{k/2+1}{2}$, we have $2p\le \f{k}{2}+1\le k-3$ when $k\ge 8$, and $k-2p\le k-2$; therefore
\beq
I_{12}\ls \|f\|_{L^1_{k-3+\gamma}}\|g\|_{L^2_{k-2+\gamma}}\|h\|_{L^2_{k}}+\|g\|_{L^1_{k-3+\gamma}}\|f\|_{L^2_{k-2+\gamma}}\|h\|_{L^2_{k}}.
\eeq
Since $0\le \gamma\le 1$, we have $\|f\|_{L^1_{k-3+\gamma}}\ls \|f\|_{L^2_k}$, and hence
\beq
I_{12}\ls \|f\|_{L^2_k}\|g\|_{L^2_k}\|h\|_{L^2_{k}}.
\eeq

\underline{\it{Estimate of $I_2$.} } Using Lemma \ref{thm:E} and Taylor expansion, we obtain
\begin{equation*}
\begin{aligned}
\left\langle v^{\prime}\right\rangle^k-(E(\theta))^{k / 2}&=  \frac{k}{2}\left[\left(\langle v\rangle^2 \cos ^2(\theta / 2)\right)^{k / 2-1}+\left((E(\theta))^{k / 2-1}-\left(\langle v\rangle^2 \cos ^2(\theta / 2)\right)^{k / 2-1}\right)\right]\left|v-v_*\right| \sin \theta\, (v \cdot \omega) \\
& +\frac{k(k-2)}{4} \int_0^1(1-t)(E(\theta)+t \tilde{h} \sin \theta\, (\mathbf{j} \cdot \hat{\omega}))^{k / 2-2} d t\, \tilde{h}^2 \sin ^2 \theta\, (\mathbf{j} \cdot \hat{\omega})^2.
\end{aligned}
\end{equation*}
Thus we can decompose $I_{2}$ as follows:
\beq
\ba
|I_2|\ls &k\left|\int_{\mathbb{R}^6 \times \mathbb{S}^2} \left| v-v_*\right|^{1+\gamma} f_* g\langle v\rangle^{k-2} h^{\prime}\left\langle v^{\prime}\right\rangle^k \cos ^{k-1}(\theta / 2) \sin (\theta / 2)(v \cdot \omega) \,d v\, d v_*\, d \sigma \right|\\
	&+k\left|\int_{\mathbb{R}^6 \times \mathbb{S}^2} \left| v-v_*\right|^{1+\gamma} f_* g h^{\prime}\left\langle v^{\prime}\right\rangle^k\left[(E(\theta))^{k / 2-1}-\left(\langle v\rangle^2 \cos ^2(\theta / 2)\right)^{k / 2-1}\right] \sin \theta(v \cdot \omega)\, d v \,d v_*\, d \sigma \right|\\
	&+k^2\left|\int_{\mathbb{R}^6 \times \mathbb{S}^2} \int_0^1 \sin ^2 \theta\left|v-v_*\right|^\gamma(1-t)(E(\theta)+t \tilde{h} \sin \theta\,(\mathbf{j} \cdot \hat{\omega}))^{k/2-2} \tilde{h}^2(\mathbf{j} \cdot \hat{\omega})^2  f_*gh'\left\langle v^{\prime}\right\rangle^{k} \,d v \,d v_* d \sigma \,d t .
	\right| \\
	&=:I_{21}+I_{22}+I_{23}.
\ea
\eeq

\underline{$\bullet$ Estimate of $I_{21}$.} From \eqref{eq:vw}, we have $\left| v-v_*\right| |v \cdot \omega|\ls |v||v_*|$; applying \eqref{eq:Kfgh2} yields
	\beq
	\ba
	&I_{21}\ls k\int_{\R^6\times\S^2}|v-v_*|^{\gamma}|f_*gh'|\weis\wei^{k-1}\br{v'}^k\cos ^{k-1}(\theta / 2) \sin (\theta / 2)\,d\sigma \,dv\, dv_*\\
	&\ls k B\left( \f{k}{2}-\f{5}{4}+\f12,1 \right)\|f\|_{L^1_{1+\gamma}}\|g\|_{L^2_{k-1+\gamma}}\|h\|_{L^2_{k}}\ls \|f\|_{L^1_{1+\gamma}}\|g\|_{L^2_{k-1+\gamma}}\|h\|_{L^2_{k}}.
	\ea
	\eeq
Since $0\le \gamma\le 1$ and $k\ge 8$, we have $\|f\|_{L^1_{1+\gamma}}\ls \|f\|_{L^2_k}$, and therefore
	\beq
	I_{21}\ls \|f\|_{L^2_k}\|g\|_{L^2_k}\|h\|_{L^2_{k}}.
	\eeq
\underline{$\bullet$ Estimate of $I_{23}$.} From \eqref{eq:vw}, we have $|\tilde{h} \sin \theta\, (\mathbf{j} \cdot \hat{\omega})|\ls |v|\,|v_*|\sin(\theta/2)\cos(\theta/2)\ls E(\theta)\ls \max\{|v|^2,|v_*|^2\}$. Consequently,
	\beq
	\begin{aligned}
	&I_{23} \ls_k \int_{\mathbb{R}^6 \times \mathbb{S}^2} \sin ^2 \theta\left|v-v_*\right|^{\gamma} |v|^2|v_*|^2 \max \left\{\langle v\rangle^2 ,\left\langle v_*\right\rangle^2 \right\}^{k / 2-2}  \left|f_*gh'\right|\langle v^{\prime}\rangle^k\, d v\, d v_*\, d \sigma,
		\end{aligned}
		\eeq
	Since $ \max \left\{\langle v\rangle^2 ,\left\langle v_*\right\rangle^2 \right\}^{k / 2-2}\le (\wei^{k-4}+\weis^{k-4})$, and using $|v|\le \wei$, $|v_*|\le \weis$ together with Lemma \ref{thm:bv}, we obtain
	\beq
	\ba
	& I_{23}\ls \int_{\R^6\times\S^2}\sin^2\theta |v-v_*|^{\gamma}|f_*gh'|\wei^2\weis^2(\wei^{k-4}+\weis^{k-4})\br{v'}^k\, d v\, d v_*\, d \sigma\\
	& \ls \int_{\R^6\times\S^2}\sin^2\theta \sin^{-3/2}(\theta/2)\,d\sigma\, \|g\|_{L^1_{2+\gamma}}\|f\|_{L^2_{k-2+\gamma}}\|h\|_{L^2_{k}}\\ &+\int_{\R^6\times\S^2}\sin^2\theta \cos^{-3/2}(\theta/2)\,d\sigma\, \|f\|_{L^1_{2+\gamma}}\|g\|_{L^2_{k-2+\gamma}}\|h\|_{L^2_{k}}
	\\ &\ls (\|g\|_{L^1_{2+\gamma}}\|f\|_{L^2_{k-2+\gamma}}+\|f\|_{L^1_{2+\gamma}}\|g\|_{L^2_{k-2+\gamma}})\|h\|_{L^2_{k}}.
	\ea
	\eeq
	Since $0\le \gamma\le 1$ and $k\ge 8$, we have $\|f\|_{L^1_{2+\gamma}}\ls \|f\|_{L^2_k}$, and hence
\beq
	I_{23}\ls \|f\|_{L^2_k}\|g\|_{L^2_k}\|h\|_{L^2_{k}}.
\eeq
\underline{$\bullet$ Estimate of $I_{22}$.} Similarly to the decomposition of $I_{1}$, from \eqref{eq:Et} we have 
	\beq
	\ba
	|(E(\theta))^{k / 2-1}-(\langle v\rangle^2 \cos ^2(\theta / 2))^{k / 2-1}|&\le \sum_{p=1}^{l_{k / 2-1}} \frac{\Gamma(k / 2)}{\Gamma(p+1) \Gamma(k / 2-p)}\left[\left(\langle v\rangle^2 \cos ^2(\theta / 2)\right)^{p}\left(\left\langle v_*\right\rangle^2 \sin ^2(\theta / 2)\right)^{k / 2-1-p}\right. \\ &\left.+\left(\langle v\rangle^2 \cos ^2(\theta / 2)\right)^{k / 2-1-p}\left(\left\langle v_*\right\rangle^2 \sin ^2(\theta / 2)\right)^p\right]
	+\weis^{k-2}\sin^{k-2}(\theta/2).
	\ea
	\eeq
For each term, for $p=1,2,\cdots, l_{k / 2-1}$, applying \eqref{eq:vw} and \eqref{eq:Kfgh2} yields
\beq
\ba
&\left|\int_{\R^6\times\S^2}|v-v_*|^{1+\gamma}|f_*gh'|\weis^{k-2-2p}\wei^{2p}\br{v'}^k(v\cdot \omega)\cos^{2p+2}(\theta/2)\sin^{k-2p}(\theta/2)\,d\sigma \, dv\,dv_*\right|\\
& \ls \int_{\R^6\times\S^2}|v-v_*|^{\gamma}|f_*gh'|\weis^{k-1-2p}\wei^{2p+1}\br{v'}^k\cos^{2p+2}(\theta/2)\sin^{k-2p}(\theta/2)\, d\sigma\, dv\,dv_*\\
&\ls B\left( p+1+\f12,\f{k}{2}-p-\f{3}{4}+\f12 \right)\|f\|_{L^1_{k-2p-1+\gamma}}\|g\|_{L^2_{2p+1+\gamma}}\|h\|_{L^2_{k}},
\ea
\eeq
and
\beq
\ba
&\left|\int_{\R^6\times\S^2}|v-v_*|^{1+\gamma}|f_*gh'|\weis^{2p}\wei^{k-2-2p}\br{v'}^k(v\cdot \omega)\cos^{k-2p}(\theta/2)\sin^{2p+2}(\theta/2)\,d\sigma \,dv\, dv_*\right|\\
& \ls \int_{\R^6\times\S^2}|v-v_*|^{\gamma}|f_*gh'|\weis^{2p+1}\wei^{k-1-2p}\br{v'}^k\cos^{k-2p}(\theta/2)\sin^{2p+2}(\theta/2)\, d\sigma\, dv\,dv_*\\
&\ls B\left( \f{k}{2}-p-\f{3}{4}+\f12, p+1+\f12\right)\|f\|_{L^1_{2p+1+\gamma}}\|g\|_{L^2_{k-2p-1+\gamma}}\|h\|_{L^2_{k}}.
\ea
\eeq
By \eqref{eq:GmGm}, we have 
\beq
\ba
&\sum_{p=1}^{l_{k/2-1}}\frac{\Gamma(k / 2)}{\Gamma(p+1) \Gamma(k / 2-p)}B\left( p+1+\f12,\f{k}{2}-p-\f{3}{4}+\f12 \right)\ls k,\\ &\sum_{p=1}^{l_{k/2-1}}\frac{\Gamma(k / 2)}{\Gamma(p+1) \Gamma(k / 2-p)}B\left( \f{k}{2}-p-\f{3}{4}+\f12, p+1+\f12\right)\ls k.
\ea
\eeq
With $1\le p\le l_{k / 2-1}\le k/4$, we have $2p+1\le k/4+1< k-3$ for $k\ge 6$, and $k-2p-1\le k-2$. 
For the last term, applying \eqref{eq:Kfgh1} gives
	\beq
	\ba
	&\left|\int_{\R^6\times\S^2}|v-v_*|^{1+\gamma}f_*gh'\weis^{k-2}\br{v'}^k(v\cdot \omega)\cos^{2}(\theta/2)\sin^{k}(\theta/2)\,d\sigma\, dv\,dv_*\right|\\
	&\ls \int_{\R^6\times\S^2}|v-v_*|^{\gamma}|f_*gh'|\weis^{k-1}\wei\br{v'}^k\cos^{2}(\theta/2)\sin^{k}(\theta/2)\,d\sigma\, dv\,dv_*\\
	&\ls B\left( \f{k}{2}-\f{3}{4}+\f12,1+\f12 \right)\|g\|_{L^1_{1+\gamma}}\|f\|_{L^2_{k-1+\gamma}}\|h\|_{L^2_{k}}.
	\ea
	\eeq
	Combining these results, we obtain
	\beq
	I_{22}\ls \|f\|_{L^1_{k-3+\gamma}}\|g\|_{L^2_{k-2+\gamma}}\|h\|_{L^2_{k}}+\|g\|_{L^1_{1+\gamma}}\|f\|_{L^2_{k-1+\gamma}}\|h\|_{L^2_{k}},
	\eeq
since $0\le \gamma\le 1$, we have $\|f\|_{L^1_{k-3+\gamma}}\ls \|f\|_{L^2_{k}}$, and therefore
	\beq
	I_{22}\ls \|f\|_{L^2_k}\|g\|_{L^2_k}\|h\|_{L^2_{k}}.
	\eeq
\underline{\it{Estimate of $I_3$.} } Since $I_3=\br{Q^-(f,g),h\wei^{2k}}$, we have 
\beq
|I_3|\le \int_{\mathbb{R}^6 \times \mathbb{S}^2} \left|v-v_*\right|^\gamma |f_*gh|\langle v\rangle^{2 k}\, d v\, d v_*\, d \sigma\ls \|f\|_{L^1_{\gamma}}\|g\|_{L^2_{k+\gamma/2}}\|h\|_{L^2_{k+\gamma/2}}.
\eeq

Combining all the terms, we obtain the bound
\beq
\ba
\left|\br{Q(f, g), h\wei^{2 k}}\right|\le \f{C_1}{\sqrt{k}}\|g\|_{L^1_\gamma}\|f\|_{L^2_{k+\gamma/2}}\|h\|_{L^2_{k+\gamma/2}}+C_2\|f\|_{L^1_{\gamma}}\|g\|_{L^2_{k+\gamma/2}}\|h\|_{L^2_{k+\gamma/2}}+C_k\|f\|_{L^2_k}\|g\|_{L^2_k}\|h\|_{L^2_{k}},
\ea
\eeq
where $C_1,C_2$ are absolute constants and $C_k$ depends only on $k$. This completes the proof.

\noindent\textit{Proof of Theorem \ref{thm:Qfgh} (2).} It suffices to establish that, for any functions $f,g,h$ and any $k\ge 8$, the following inequality holds:
\ben\label{eq:Qfgh2}
\ba
&\left|\br{Q(f,g),h\wei^{2k}}\right|\ls (\|f\|_{L^1_\gamma}\|g\|_{L^2_{k+\gamma}}+\|g\|_{L^1_\gamma}\|f\|_{L^2_{k+\gamma}})\|h\|_{L^2_k}+C_k\|f\|_{L^2_k}\|g\|_{L^2_k}\|h\|_{L^2_{k}}.
\ea
\een
We employ the same decomposition as in the proof of part (1) and need only estimate $I_{11}, I_{13},$ and $I_3$. For $I_{11}$, applying \eqref{eq:Kfgh2} yields
\beq
\ba
I_{11}&=\int_{\R^6\times\S^2}|v-v_*|^\gamma |f_*|\, |g|\, |h'|\br{v'}^k\wei^k\cos^{k}(\theta/2)\,dv\, dv_*\,d\sigma\\
&\ls \int_{\S^2}\cos^{k-3/2}(\theta/2)\,d\sigma\, \|f\|_{L^1_{\gamma}}\|g\|_{L^2_{k+\gamma}}\|h\|_{L^2_{k}}\ls \|f\|_{L^1_{\gamma}}\|g\|_{L^2_{k+\gamma}}\|h\|_{L^2_{k}},
\ea
\eeq
where we have used that $\int_{\S^2}\cos^{k-3/2}(\theta/2)\,d\sigma\ls 1$ for $k\ge 8$. Similarly, for $I_{13}$ we obtain
\beq
I_{13}\ls \int_{\S^2}\sin^{k-3/2}(\theta/2)\,d\sigma \, \|g\|_{L^1_\gamma}\|f\|_{L^2_{k+\gamma/2}}\|h\|_{L^2_{k+\gamma/2}}\ls \f{1}{\sqrt{k}}\|g\|_{L^1_\gamma}\|f\|_{L^2_{k+\gamma}}\|h\|_{L^2_{k}}.
\eeq
For $I_3$, we estimate
\beq
|I_3|\le \int_{\mathbb{R}^6 \times \mathbb{S}^2} \left|v-v_*\right|^\gamma |f_*gh|\langle v\rangle^{2 k}\, d v\, d v_*\, d \sigma\ls \|f\|_{L^1_{\gamma}}\|g\|_{L^2_{k+\gamma}}\|h\|_{L^2_{k}}.
\eeq
Combining these estimates with those for $I_{12}$ and $I_2$ from part (1), we conclude that \eqref{eq:Qfgh2} holds. This completes the proof of Theorem \ref{thm:Qfgh} (2).
\end{proof}
\medskip
\begin{proof}[Proof of Theorem \ref{thm:Qfgg}] Following a similar approach to the previous proof, we employ the following decomposition: for functions $f,g$ with $f$ non-negative satisfying $\|f\|_{L^1}>\delta$ and $\|f\|_{L^1_2}+\|f\|_{L\log L}<\mathsf{H}$, we have
		\beq
	\ba
		&\br{Q(f, g), g\wei^{2 k}}=  \int_{\mathbb{R}^6 \times \mathbb{S}^2} \left|v-v_*\right|^\gamma f_*\left(g g^{\prime}\left\langle v^{\prime}\right\rangle^k E(\theta)^{k / 2}-g^2\langle v\rangle^{2 k}\right)\, d v\, d v_*\, d \sigma \\
		& +\int_{\mathbb{R}^6 \times \mathbb{S}^2} \left|v-v_*\right|^\gamma f_* g g^{\prime}\left\langle v^{\prime}\right\rangle^k\left(\left\langle v^{\prime}\right\rangle^k-E(\theta)^{k / 2}\right)\, d v\, d v_*\, d \sigma=: I_1+I_2.
\ea
\eeq

\underline{\it{Estimate of $I_1$.} } For the first term $I_1$, we decompose $(E(\theta))^{k/2}$ using \eqref{eq:Et} to obtain
\beq
\ba
&I_1\le  \int_{\R^6\times \S^2}|v-v_*|^\gamma f_*\left(|g|\,|g'|\br{v'}^k\wei^k\cos^k(\theta/2)-|g|^2\wei^{2k}\right)\,dv\, dv_*\,d\sigma\\
	&+\sum_{p=1}^{l_{k / 2}-1} \frac{\Gamma(k / 2+1)}{\Gamma(p+1) \Gamma(k / 2+1-p)} \int_{\mathbb{R}^6 \times \mathbb{S}^2}\left|v-v_*\right|^\gamma f_*|g|\,|g'|\left\langle v^{\prime}\right\rangle^k\wei^{2p}\weis^{k-2p}\cos^{2p}(\theta/2)\sin^{k-2p}(\theta/2)\\
	&+\int_{\R^6\times \S^2}|v-v_*|^\gamma f_*|g|\,|g'|\br{v'}^k\weis^k\sin^k(\theta/2)\,dv\, dv_*\,d\sigma=:I_{11}+I_{12}+I_{13}
	\ea
	\eeq

	\underline{$\bullet$ Estimate of $I_{11}$.}
	 By the cancellation lemma in Lemma \ref{thm:can} and the inequality $|g|\left|g^{\prime}\right|\left\langle v^{\prime}\right\rangle^k\langle v\rangle^k \cos ^k \frac{\theta}{2}-|g|^2\langle v\rangle^{2 k} \leq \frac{1}{2}\left(\left|g^{\prime}\right|^2\left\langle v^{\prime}\right\rangle^{2 k} \cos ^{2 k} \frac{\theta}{2}-|g|^2\langle v\rangle^{2 k}\right)$, we deduce
	\beq
	\begin{aligned}
		I_{11}& \leq \frac{1}{2} \int_{\mathbb{R}^6 \times \mathbb{S}^2}\left|v-v_*\right|^\gamma f_*|g|^2\langle v\rangle^{2 k}\left(\cos ^{2 k-3-\gamma} \frac{\theta}{2}-1\right) d v d v_* d \sigma \\
		& \leq-\frac{1}{2}\left\|1-\cos ^{2 k-3-\gamma} \frac{\theta}{2}\right\|_{L_\theta^1} \int_{\mathbb{R}^6}\left|v-v_*\right|^\gamma f_*\left|g(v)\langle v\rangle^k\right|^2 d v d v_* .\\
		&\le -K\|g\|_{L^2_{k+\gamma/2}}^2.
		\end{aligned}
	\eeq
Here we use the well-known fact that $\int_{\R^3}|v-v_*|^\gamma f_*dv_*\ge C(f)\wei^{\gamma}$ for a non-negative function $f$, where $C(f)$ is a positive constant depending only on $\delta$ and $\mathsf{H}$ under the conditions $\|f\|_{L^1}>\delta$ and $\|f\|_{L^1_2}+\|f\|_{ L\log L}<\mathsf{H}$. Furthermore, we observe that 
\beq
\left\|1-\cos ^{2 k-3-\gamma} \frac{\theta}{2}\right\|_{L_\theta^1}=\int_0^{\pi}\left(1-\cos ^{2 k-3-\gamma} \frac{\theta}{2}\right)\,d\theta=\pi-\f{\Gamma(k-\f{3+\gamma}{2})\Gamma(\f{1}{2})}{\Gamma(k-\f{2+\gamma}{2})},
\eeq
when $k\ge 8$, we have $k-\f{3+\gamma}{2}\ge 6$, which implies $\f{\Gamma(k-\f{3+\gamma}{2})\Gamma(\f{1}{2})}{\Gamma(k-\f{2+\gamma}{2})}<\pi/2$. Consequently, $\left\|1-\cos ^{2 k-3-\gamma} \frac{\theta}{2}\right\|_{L_\theta^1}\ge \pi/2$.
Thus the constant $K$ depends solely on $\delta$ and $\mathsf{H}$.

\underline{$\bullet$ Estimate of $I_{12},I_{13}$.}
The terms $I_{12}$ and $I_{13}$ are identical to those in the previous proof. Hence we have
\beq
I_{12}\ls_k \|f\|_{L^2_{k}}\|g\|_{L^2_{k}}^2,
\eeq
\beq
I_{13}\ls \f{1}{\sqrt{k}}\|g\|_{L^1_{\gamma}}\|f\|_{L^2_{k+\gamma/2}}\|g\|_{L^2_{k+\gamma/2}}.
\eeq

\underline{\it{Estimate of $I_2$.} } The term $I_2$ is also identical to the corresponding term in the previous proof. Therefore
\beq
I_2\ls_k \|f\|_{L^2_{k}}\|g\|_{L^2_{k}}^2.
\eeq

Combining these estimates, we obtain
\beq
\ba
\br{Q(f, g), g\wei^{2 k}}+K\|g\|_{L^2_{k+\gamma/2}}^2\le \f{C_1}{\sqrt{k}}\|g\|_{L^1_{\gamma}}\|f\|_{L^2_{k+\gamma/2}}\|g\|_{L^2_{k+\gamma/2}}+C_k\|f\|_{L^2_{k}}\|g\|_{L^2_{k}}^2,
\ea
\eeq
where the constant $K$ depends only on $\delta$ and $\mathsf{H}$, $C_1$ is an absolute constant, and $C_k$ depends solely on $k$.
This completes the proof of the theorem.
\end{proof}

\subsection{Proof of Lemma \ref{thm:betalmest}}\label{sec:proofbeta} 
\underline{\it{Point (1).}} For $l,m\in\Z^3$, let $\xi=\pi \lambda\,|l+m|$ and $\eta=\pi \lambda\,|l-m|$. It then suffices to consider the integral
\beq
B(\xi,\eta)=\int_0^1r^{2+\gamma}\sinc( \xi\,r)\,\sinc( \eta\,r)\,dr.
\eeq
Since $|\beta(l,m)|\ls R^{3+\gamma}|B(\xi,\eta)|$, it remains to prove that
\ben\label{eq:Blm}
|B(\xi,\eta)|\ls \br{\xi}^{-1}\br{\eta}^{-1}\br{\xi-\eta}^{-1}.
\een

\smallskip
\underline{Case 1: At least one of $\xi,\eta$ is zero.}
If $\xi=0$ and $\eta=0$, since $\sinc(0)=1$ we have $B(\xi,\eta)=\int_0^1r^{2+\gamma} dr=\f{1}{3+\gamma}\le 1$, which satisfies the bound \eqref{eq:Blm}.

If exactly one of $\xi,\eta$ is zero, by symmetry we may assume $\xi=0$ and $\eta\neq 0$. In this case,
\beq
B(\xi,\eta)=\int_0^1r^{2+\gamma}\sinc( \eta\,r)\, dr=\f{1}{\eta}\int_0^1r^{1+\gamma}\sin( \eta\,r)\,dr.
\eeq
Integrating by parts yields
\beq
\ba
B(\xi,\eta)=\f{1}{\eta^2}\rr{-\cos( \eta)+(1+\gamma)\rr{\int_0^1r^{\gamma}\sin( \eta\,r)\,dr}},
\ea
\eeq
Hence $|B(\xi,\eta)|\ls \f{1}{\eta^2}$. Since $l,m\in\Z^3$ and $\eta\neq 0$, we have $\eta\gs 1$, which implies $|B(\xi,\eta)|\ls \br{\eta}^{-2}$. With $\xi=0$, this establishes the bound \eqref{eq:Blm}.

\smallskip
\underline{Case 2: Both $\xi$ and $\eta$ are positive.} By the definition of $\sinc$,
\beq
B(\xi,\eta)=\f{1}{\xi\eta}\int_0^1r^{\gamma}\sin( \xi\,r)\,\sin( \eta\,r)\,dr.
\eeq
Therefore $|B(\xi,\eta)|\ls \f{1}{\xi\eta}\ls \br{\xi}^{-1}\br{\eta}^{-1}$ since $\xi\gs 1$ and $\eta\gs 1$. Moreover, when $|\xi-\eta|\le 1$, we have $\br{\xi-\eta}\ls 1$, and consequently
\beq
|B(\xi,\eta)|\ls \br{\xi}^{-1}\br{\eta}^{-1}\ls \br{\xi}^{-1}\br{\eta}^{-1}\br{\xi-\eta}^{-1}.
\eeq
Thus it remains to consider the case $|\xi-\eta|>1$. Using the identity $\sin a\sin b=\f{1}{2}(\cos(a-b)-\cos(a+b))$, we obtain
\beq
B(\xi,\eta)=\f{1}{2\xi\eta}\rr{\int_0^1 r^\gamma \cos((\xi-\eta)\,r)\, dr+\int_0^1 r^\gamma \cos((\xi+\eta)\,r)\, dr}.
\eeq
We treat both integrals in the same manner: for $\omega\in\R$, define $I(\omega)=\int_0^1 r^\gamma \cos(\omega\,r)\, dr$. Then
\beq
B(\xi,\eta)=\f{1}{\xi\eta}(I(\xi-\eta)+I(\xi+\eta)).
\eeq
We now estimate $I(\omega)$ for $|\omega|>1$. If $\gamma=0$,
\beq
I(\omega)=\int_0^1 \cos(\omega\,r)\, dr=\f{1}{\omega}\sin(\omega).
\eeq
If $\gamma\in (0,1]$, integration by parts gives
\beq
I(\omega)=\f{1}{\omega}\rr{\sin(\omega)-\gamma\int_0^1 r^{\gamma-1}\sin(\omega\,r)\,dr},
\eeq
Hence $|I(\omega)|\ls \f{1}{|\omega|}$ for all $|\omega|>1$.

Returning to the estimate of $B(\xi,\eta)$, since both $\xi$ and $\eta$ are positive, we have $|\xi-\eta|\le \xi+\eta$ and $\xi+\eta>0$. Consequently, when $|\xi-\eta|>1$ and $\xi\gs 1,\ \eta\gs 1$,
\beq
|B(\xi,\eta)|\ls \f{1}{\xi\eta}\rr{\f{1}{|\xi-\eta|}+\f{1}{\xi+\eta}}\ls \f{1}{\xi\eta}\f{1}{|\xi-\eta|}\ls \br{\xi}^{-1}\br{\eta}^{-1}\br{\xi-\eta}^{-1}.
\eeq
This establishes \eqref{eq:Blm} and completes the proof of part (1).

\medskip
\underline{\it{Point (2).}} We first claim that for $l,m\in\Z^3$, 
\ben
\big|\,|l+m|-|l-m|\,\big|\sim\f{|l\cdot m|}{\sqrt{|l|^2+|m|^2}}.
\een
The proof is as follows: by direct computation,
\beq
\big|\,|l+m|-|l-m|\,\big|=\f{||l+m|^2-|l-m|^2|}{|l+m|+|l-m|}=\f{4|l\cdot m|}{|l+m|+|l-m|}.
\eeq
Since $(|l+m|+|l-m|)^2\ls |l|^2+|m|^2$ and $(|l+m|+|l-m|)\gs \max\{|l+m|^2,|l-m|^2\}=|l|^2+|m|^2+2|l\cdot m|\ge |l|^2+|m|^2$, the claim follows.

Using the estimate from part (1) and the asymptotic relation for $|l+m|-|l-m|$, we obtain
\beq
|\beta(l,m)|^2\ls R^{2(3+\gamma)} \f{1}{|l+m|^2+1}\f{1}{|l-m|^2+1}\f{|l|^2+|m|^2}{|l\cdot m|^2+|l|^2+|m|^2}.
\eeq
We define the sum $S$ by
\beq
S=\sum_{l\in\Z^3}|l|^\delta\,\f{1}{|l+m|^2+1}\f{1}{|l-m|^2+1}\f{|l|^2+|m|^2}{|l\cdot m|^2+|l|^2+|m|^2},
\eeq
Since $\sum_{l\in\Z^3}|l|^\delta|\beta(l,m)|^2\ls R^{2(3+\gamma)}\,S$, it suffices to estimate $S$.
When $m=0$, we have $|\beta(l,m)|^2\ls \br{l}^{-4}$, so for $0\le\delta<1$,
\beq
S\ls \sum_{l\in\Z^3}\f{|l|^\delta}{(|l|^2+1)^2}\ls 1.
\eeq
When $m\neq 0$, we have $|m|>1$. Summing over $l\in\Z^3$ gives
\beq
S\ls \int_{\R^3}\f{1}{|x+m|^2+1}\f{1}{|x-m|^2+1}\f{|x|^2+|m|^2}{|x\cdot m|^2+|x|^2+|m|^2}|x|^\delta\,dx.
\eeq
Applying polar coordinates with $x=r\omega$ where $\omega\in\S^2$, and setting $\f{x\cdot m}{|x|\,|m|}=\omega\cdot \f{m}{|m|}=\cos \theta$, the integral transforms to
\beq
\ba
S&\ls \int_0^\infty \int_0^{\pi}\f{1}{|r\omega+m|^2+1}\f{1}{|r\omega-m|^2+1}\f{r^2+|m|^2}{r^2|m|^2\cos^2\theta+r^2+|m|^2} \,r^{2+\delta}\,d\theta\, dr
\\&=\int_0^\infty \int_0^{\pi}\f{1}{|r|^2+|m|^2+1+2r|m|\cos\theta}\f{1}{|r|^2+|m|^2+1-2r|m|\cos\theta}\f{r^2+|m|^2}{r^2|m|^2\cos^2\theta+r^2+|m|^2} \,r^{2+\delta}\,d\theta\, dr.
\ea
\eeq
We first estimate the integral in $\theta$. Since 
\beq
\f{r^2+|m|^2}{r^2|m|^2\cos^2\theta+r^2+|m|^2}\le \f{r^2+|m|^2+1}{r^2|m|^2\cos^2\theta+r^2+|m|^2+1}\ls \f{r^2+|m|^2+1}{4r^2|m|^2\cos^2\theta+r^2+|m|^2+1},
\eeq
for any $r>0$, let $A=A(r)=r^2+|m|^2+1$ and $B=B(r)=2r|m|$. Then the integral is bounded by $\int_0^\infty r^2\,I(r)\,dr$, where $I(r)$ is defined by
\beq
\ba
I(r):=&\int_0^{\pi}\f{1}{A+B\cos\theta}\f{1}{A-B\cos\theta}\f{A}{A+B^2\cos^2\theta}\,d\theta\\ =&\f12\int_0^\pi \rr{\f{1}{A+B\cos\theta}+\f{1}{A-B\cos\theta}}\f{1}{A+B^2\cos^2\theta}\,d\theta
\\=&\int_0^\pi \f{1}{A+B\cos\theta}\f{1}{A+B^2\cos^2\theta}d\theta.
\ea
\eeq
The last equality follows from the symmetry of $\cos\theta$ under the change of variable $\theta\mapsto \pi-\theta$. The integrand can be decomposed into the following partial fractions:
\beq
\f{1}{A+B\cos\theta}\f{1}{A+B^2\cos^2\theta}=\f{1}{A(A+1)}\rr{\f{1}{A+B\cos\theta}+\f{A-B\cos\theta}{A+B^2\cos^2\theta}}.
\eeq
Integrating in $\theta$ yields
\beq
\int_0^\pi \f{1}{A+B\cos\theta}\,d\theta=\f{\pi}{\sqrt{A^2-B^2}},\ \ 
\int_0^\pi \f{A-B\cos\theta}{A+B^2\cos^2\theta}\,d\theta=\f{\pi\sqrt{A}}{\sqrt{A+B^2}}.
\eeq
Therefore
\beq
I(r)=\f{\pi}{A(A+1)}\rr{\f{1}{\sqrt{A^2-B^2}}+\f{\sqrt{A}}{\sqrt{A+B^2}}}.
\eeq

We now consider the integration in $r$. This yields
\beq
S\ls \int_0^\infty r^{2+\delta} I(r)\,dr=\pi\int_0^\infty \f{r^{2+\delta}}{A(A+1)}\rr{\f{1}{\sqrt{A^2-B^2}}+\f{\sqrt{A}}{\sqrt{A+B^2}}}\,dr\ls I_1+I_2,
\eeq
with $I_1,I_2$ defined as
\beq
\begin{cases}
\ds I_1:=\int_0^\infty \f{r^{2+\delta}}{A^2\sqrt{A^2-B^2}}\,dr,\\[1em]
\ds I_2:=\int_0^\infty \f{r^{2+\delta}}{A^{3/2}\sqrt{A+B^2}}\,dr.	
\end{cases}
\eeq

For $I_1$, we make the change of variable $r=\sqrt{|m|^2+1}\tan \phi$ with $\phi\in [0,\pi/2]$. Then $dr=\sqrt{|m|^2+1}\sec^2\phi\,d\phi$ and
\beq
A=r^2+|m|^2+1=(|m|^2+1)(1+\tan^2\phi)=(|m|^2+1)\sec^2\phi,
\eeq
\beq
A^2-B^2=(|m|^2+1)^2\sec^4\phi-4|m|^2(|m|^2+1)\tan^2\phi=(|m|^2+1)^2\sec^4\phi\rr{1-\f{|m|^2}{|m|^2+1}\sin^22\phi}.
\eeq
Let $q=\f{|m|}{\sqrt{|m|^2+1}}$. Then the integral $I_1$ becomes
\beq
\ba
I_1&=\int_0^{\pi/2}\f{(|m|^2+1)^{\f{3+\delta}{2}}\tan^{2+\delta}\phi\,\sec^2\phi}{(|m|^2+1)^2\sec^4\phi \sqrt{(|m|^2+1)^2\sec^4\phi\,(1-q^2\sin^22\phi)}}\,d\phi\\ &=\f{1}{(|m|^2+1)^{\f{3-\delta}{2}}}\int_0^{\pi/2}\f{\sin^{2+\delta}\phi\,\cos^{2-\delta}\phi}{\sqrt{1-q^2\sin^22\phi}}\,d\phi\ls \f{1}{(|m|^2+1)^{\f{3-\delta}{2}}}\int_0^{\pi}\f{\sin^{2-\delta}\phi}{\sqrt{1-q^2\sin^2\phi}}\,d\phi.
\ea
\eeq
Here we use that for $0\le \delta<1$, $\sin^{2+\delta}\phi\,\cos^{2-\delta}\phi=2^{-(2-\delta)}\sin^{2\delta}\phi\,\sin^{2-\delta}2\phi\ls \sin^{2-\delta}2\phi$, together with the change of variable $\phi\mapsto 2\phi$.
For the elliptic integral, by symmetry and the change of variable $\phi\mapsto \pi/2-\phi$,
\beq
\int_0^{\pi}\f{\sin^{2-\delta}\phi}{\sqrt{1-q^2\sin^2\phi}}\,d\phi=2\int_0^{\pi/2}\f{\sin^{2-\delta}\phi}{\sqrt{1-q^2\sin^2\phi}}\,d\phi=2\int_0^{\pi/2}\f{\cos^{2-\delta}\phi}{\sqrt{1-q^2\cos^2\phi}}\,d\phi.
\eeq
To estimate this integral, we use the inequality $\sin\phi\ge \f{2}{\pi}\phi$ for $\phi\in[0,\pi/2]$, which implies $\cos^2\phi\le 1-\rr{\f{2}{\pi}\phi}^2$. Consequently, 
\beq
\ba
&\int_0^{\pi/2}\f{\cos^{2-\delta}\phi}{\sqrt{1-q^2\cos^2\phi}}\,d\phi\le \int_0^{\pi/2}\f{1}{\sqrt{1-q^2(1-\rr{\f{2}{\pi}\phi}^2)}}\,d\phi=\sqrt{|m|^2+1}\int_0^{\pi/2}\f{1}{\sqrt{1+\rr{\f{2}{\pi}\phi}^2|m|^2}}\,d\phi\\ &
=\f{\pi\sqrt{|m|^2+1}}{2|m|}\log\rr{|m|+\sqrt{|m|^2+1}}\ls \log(\br{m}).
\ea
\eeq
The last inequality uses $|m|\ge 1$. Hence 
\beq
I_1\ls \f{\log(\br{m})}{\br{m}^{3-\delta}}.
\eeq

We now consider the integral $I_2$. First, observe that for $A+B^2$:
\beq
A+B^2=r^2+|m|^2+1+4r^2|m|^2\ge r^2(|m|^2+1).
\eeq
Then $I_2$ can be bounded by
\beq
I_2\le \int_0^\infty \f{r^{2+\delta}}{A^{3/2}\sqrt{r^2(|m|^2+1)}}\,dr=\f{1}{\sqrt{|m|^2+1}}\int_0^\infty \f{r^{1+\delta}}{(r^2+|m|^2+1)^{3/2}}\,dr.
\eeq
Applying the change of variable $t=\f{r^2}{|m|^2+1}$, we have $dt=\f{2r}{|m|^2+1}\,dr$. For $0\le\delta<1$,
\beq
I_2\le\f{1}{2\rr{|m|^2+1}^{1-\f{\delta}{2}}}\int_0^\infty \f{t^{\f{\delta}{2}}}{(t+1)^{3/2}}\,dt\ls \f{1}{\rr{|m|^2+1}^{1-\f{\delta}{2}}}.
\eeq

Finally, combining the estimates for $I_1$ and $I_2$ yields
\beq
S\ls I_1+I_2\ls \f{\log(\br{m})}{\br{m}^{3-\delta}}+\f{1}{\rr{|m|^2+1}^{1-\f{\delta}{2}}}\ls \br{m}^{-2+\delta}.
\eeq
This establishes \eqref{eq:betam}.

We observe that this result relies only on \eqref{eq:betalm} from part (1), and the expression is symmetric in $l$ and $m$. Therefore, interchanging $l$ and $m$ gives \eqref{eq:betal}. This completes the proof of part (2).

\subsection{Classical propagation results for the Boltzmann equation}
In this section, we collect two classical results concerning the propagation properties of solutions to the Boltzmann equation \eqref{1} for the sake of completeness.
The first result establishes the uniform-in-time propagation of Sobolev regularity with polynomial weights:
\begin{thm}\label{thm:Hs}(\cite{MV} Theorem 5.4)
Let $0 \le f_0 \in L_2^1$ be an initial datum with finite mass and kinetic energy, and let $f$ be the unique solution preserving energy. Then for all $s>0$ and $\eta \ge \beta$, there exists $w(s)>0$ such that
$$
f_0 \in H_{\eta+w}^s \Longrightarrow \sup _{t \ge 0}\|f(t, \cdot)\|_{H_\eta^s}<+\infty.
$$
Explicitly, we may take $\beta=\gamma$.
\end{thm}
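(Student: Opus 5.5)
The plan is to prove the uniform-in-time bound by an energy method on $\partial^\alpha f$ with polynomial weights, inducting on the regularity index $s$ (first for integer $s$, then by interpolation for general $s>0$). Three ingredients are used throughout.

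\begin{enumerate*}[label=(\alph*)]
\item \emph{Uniform moment bounds.} For $0\le f_0\in L^1_2$, the energy-preserving solution satisfies $\sup_{t\ge t_0}\|f(t)\|_{L^1_q}<\infty$ for every $q$. This is the Povzner-type propagation/appearance of moments of Elmroth, Desvillettes and Mischler--Wennberg, and it is propagated from $t=0$ when $f_0$ has the corresponding moments.
\item \emph{Lower bound on the collision frequency.} Since mass and energy are conserved and the entropy is nonincreasing, $\int|v-v_*|^\gamma f_*\,dv_*\ge c_0\langle v\rangle^\gamma$ with $c_0>0$ uniform in time. This is the same fact used in the proof of Theorem \ref{thm:Qfgg}.
\item \emph{Commutation with derivatives.} Because $Q$ is translation invariant, $\partial^\alpha Q(f,f)=\sum_{\beta\le\alpha}\binom{\alpha}{\beta}Q(\partial^\beta f,\partial^{\alpha-\beta}f)$.
\end{enumerate*}

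First I establish the base case $s=0$, namely $\sup_t\|f\|_{L^2_\eta}<\infty$. I test the equation against $f\langle v\rangle^{2\eta}$ and apply Theorem \ref{thm:Qfgg} with $g=f$. This gives
\[
\tfrac12\tfrac{d}{dt}\|f\|_{L^2_\eta}^2+K\|f\|_{L^2_{\eta+\gamma/2}}^2\le \tfrac{C_1}{\sqrt\eta}\|f\|_{L^1_\gamma}\|f\|_{L^2_{\eta+\gamma/2}}^2+C_\eta\|f\|_{L^2_\eta}^3 .
\]
The cubic term is not directly closable, so I would instead use the Mouhot--Villani splitting. I write $B=B_S+B_R$, where $B_S$ is a smooth, compactly supported (in angle and in relative velocity) part of the kernel, and $B_R$ has small $L^1$ angular mass. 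The gain term of the smooth part is regularizing and maps $L^1_q\times L^2_q$ into $L^2$ with a bound involving only $L^1$-moments of one argument (Lions / Bouchut--Desvillettes / Wennberg). The remainder $Q^+_R$ is controlled, via Lemma \ref{thm:bv}, by $\varepsilon\|f\|_{L^1_\gamma}\|f\|_{L^2_{\eta+\gamma/2}}^2$, and this is absorbed by the coercive loss term. Together with interpolation $\|f\|_{L^2_\eta}\le \|f\|_{L^2_{\eta+\gamma/2}}^{\theta}\|f\|_{L^1_q}^{1-\theta}$, this yields a closed differential inequality of the form $y'\le -c\,y^{1+\delta}+C$. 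Such an inequality gives a bound uniform in time, rather than a Gronwall bound growing exponentially.

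For the inductive step, assume $\sup_t\|f\|_{H^{s-1}_{\eta+w'}}<\infty$. I apply $\partial^\alpha$ with $|\alpha|=s$ and test against $\partial^\alpha f\,\langle v\rangle^{2\eta}$. In the Leibniz sum there are three kinds of terms.
\begin{enumerate*}[label=(\roman*)]
\item The term $Q(f,\partial^\alpha f)$ is again handled by Theorem \ref{thm:Qfgg} plus the splitting above, which produces $-K\|\partial^\alpha f\|^2_{L^2_{\eta+\gamma/2}}$.
\item The term $Q(\partial^\alpha f,f)$ has $\partial^\alpha f$ in the $L^1$-type slot of the gain part. I handle it by the smoothing of $Q^+_S$: its contribution is bounded by $\|f\|_{H^{s-1}_{\cdot}}$-type quantities times $\|\partial^\alpha f\|_{L^2_\eta}$.
\item The intermediate terms $Q(\partial^\beta f,\partial^{\alpha-\beta}f)$ with $0<\beta<\alpha$ are bounded by Theorem \ref{thm:Qfgh}(1) using the induction hypothesis, at the cost of raising the weight.
\end{enumerate*}
This raising of the weight is exactly what produces the loss $w(s)$ in the statement. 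The result is once more an inequality of the form $y'\le -cy^{1+\delta}+C$ for $y=\|f\|_{\dot H^s_\eta}^2$, which gives the uniform bound. The non-integer values of $s$ follow by interpolating between consecutive integers.

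The step I expect to be the main obstacle is the top-order gain term $Q^+(\partial^\alpha f,f)$ in (ii). Its naive bound requires $s$ derivatives in a weighted $L^1$ norm, which is not available. I will first try the regularity of $Q^+_S$ in the form $\|Q^+_S(g,h)\|_{H^{1}}\lesssim\|g\|_{L^2_{q}}\|h\|_{L^1_q}$, so that one derivative can be moved off $\partial^\alpha f$. If that is not enough, I will fall back on a Duhamel argument: write $f$ as $e^{-\int\nu}f_0$ plus the integral of $e^{-\int\nu}Q^+(f,f)$ against the damping semigroup, and use the damping factor $e^{-c_0\langle v\rangle^\gamma t}$ to get uniformity in time. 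Keeping the constants uniform in time (rather than growing like $e^{Ct}$) relies essentially on (b) together with the uniform moment bounds in (a).
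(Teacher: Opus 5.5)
The paper gives no proof of this statement; it is quoted from Mouhot--Villani \cite{MV}. Their argument uses essentially the ingredients you list: weighted energy estimates, coercivity of the loss term, uniform moments, the splitting $B=B_S+B_R$, and the gain of $(d-1)/2=1$ derivative for $Q^+_S$. Your plan, however, gets wrong the mechanism that makes the estimates close uniformly in time.

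The inequality $\|f\|_{L^2_\eta}\le\|f\|_{L^2_{\eta+\gamma/2}}^{\theta}\|f\|_{L^1_q}^{1-\theta}$ is false for every $\theta<1$. Take $f_\lambda(v)=\lambda^3\phi(\lambda v)$ with $\lambda\to\infty$: the $L^1_q$ norm stays bounded while both weighted $L^2$ norms grow like $\lambda^{3/2}$. So, unlike for $L^1$ moments, there is no superlinear dissipation $-cy^{1+\delta}$. The collision operator only gives the linear term $-K\|\cdot\|_{L^2_{\eta+\gamma/2}}^2$, and every other top-order contribution must be either small or strictly sublinear in $y$.

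The bound you state for the smooth gain part, $L^1_q\times L^2_q\to L^2$, is linear in the $L^2$ norm. Hence $\langle Q^+_S(f,f),f\langle v\rangle^{2\eta}\rangle\lesssim\|f\|_{L^1_q}\|f\|_{L^2_q}^2$ is neither small nor sublinear, and the base case does not close. The same defect sits in step (i) of the induction, not only in (ii). Theorem \ref{thm:Qfgg} with $g=\partial^\alpha f$ leaves $C_\eta\|f\|_{L^2_\eta}\|\partial^\alpha f\|_{L^2_\eta}^2$, plus a term containing $\|\partial^\alpha f\|_{L^1_\gamma}$. That is a top-order quadratic term with a large constant, the same kind that forces the Gronwall factor $e^{\kappa t}$ in Section \ref{sec:3}. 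Splitting off $Q^+_R$ still leaves $Q^+_S(f,\partial^\alpha f)$, which again you only bound linearly.

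The missing idea is to use the regularity of $Q^+_S$ to bound the smooth gain part by a strictly weaker norm, applied to $Q^+_S(f,f)$ as a whole rather than after the Leibniz expansion:
\[
\|\partial^\alpha Q^+_S(f,f)\|_{L^2_\eta}\le\|Q^+_S(f,f)\|_{H^s_\eta}\lesssim\|f\|_{H^{s-1}_{\eta+w'}}\|f\|_{L^1_{\eta+w'}},
\]
which the induction hypothesis controls. In the base case, combine $\|Q^+_S(f,f)\|_{H^1}\lesssim\|f\|_{L^2}\|f\|_{L^1}$ with Nash's inequality $\|u\|_{L^2}\lesssim\|u\|_{L^1}^{2/5}\|\nabla u\|_{L^2}^{3/5}$. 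This gives $\|Q^+_S(f,f)\|_{L^2}\lesssim\|f\|_{L^1}^{7/5}\|f\|_{L^2}^{3/5}$, with weights, since $Q^+_S$ is local in $v$. One then gets $y'\le -Ky+C(1+y^{\vartheta})$ with $\vartheta<1$, which is what yields uniformity.

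Several further points follow from this:
\begin{itemize}
\item Your derivative-moving trick cannot be used termwise at $s=1$. The identity $Q^+_S(\partial_e f,f)=\partial_eQ^+_S(f,f)-Q^+_S(f,\partial_e f)$ only transfers the derivative to the other slot, so (i) and (ii) must be combined.
\item Only $Q^+_R$ and $Q^-$ should go through Leibniz. The term $Q^-(\partial^\alpha f,f)=f\,(\partial^\alpha f*|\cdot|^\gamma)$ becomes lower order after moving one derivative onto $|z|^\gamma$, and it vanishes for $\gamma=0$. The top-order $Q^+_R$ terms are absorbed with $\partial^\alpha f$ in $L^2$ in either slot.
\item If $B_S$ is compactly supported in $|v-v_*|$, then $B_R$ contains the large-relative-velocity region, where the kernel is not small and Lemma \ref{thm:bv} gives no small constant. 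There you need the Povzner mechanism, i.e.\ the $\cos^k(\theta/2)$ decay behind the $C_1/\sqrt{k}$ terms of Theorem \ref{thm:Qfgh}, together with the smallness of the lower-weight terms at large velocities.
\item The Duhamel fallback is unnecessary.
\end{itemize}
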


Next, we state a result regarding the propagation of stretched exponential moments, which guarantees the persistence of rapid decay for the initial data:

\begin{thm}\label{thm:ex}(\cite{LM} Corollary 1.4) Suppose that $B(z, \sigma)=|z|^\gamma b(\cos \theta)$ satisfies that
$$
0<\gamma \leq 1, \quad \int_0^\pi b(\cos \theta) \sin ^3 \theta(1+|\log (\sin \theta)|) d \theta<\infty.
$$
Given any non-negative initial datum $f_0 \in L^1_2(\mathbb{R}^3)$ with $\left\|f_0\right\|_{L^1} \neq 0$, there exists a conservative solution $f(t,v)$ of Eq. \eqref{1} such that for any $0<s<\gamma$ and any $c>0$

	$$
	\int_{\mathbb{R}^N} e^{c\langle v\rangle^s} f(t,v)\,dv \leq\left(e^{\alpha_s(t)}+2\right)\left\|f_0\right\|_{L^1} \quad \forall t>0
	$$
	where
	$$
	\alpha_s(t)=c\left(\frac{c}{\alpha(t)}\right)^{\frac{s}{\gamma-s}},\ \alpha(t)=2^{-s_0} \frac{\left\|f_0\right\|_{L^1}}{\left\|f_0\right\|_{L^1_2}}\left(1-e^{-\beta t}\right),
	$$
	with $1<s_0<\infty$ depends only on $b(\cdot)$ and $\gamma$, $\beta=16\left\|f_0\right\|_2 A_2 \gamma>0$ with $A_2:=\left|\mathbb{S}^{1}\right| \int_0^\pi b(\cos \theta) \sin ^3 \theta\, d\theta<\infty$.
\end{thm}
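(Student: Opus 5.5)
This result is quoted from \cite{LM}, so I would only reconstruct its proof in outline. The approach is the classical moment-summation method: generate and control all polynomial moments with explicit factorial-type growth, then sum them into the stretched exponential. Existence of a conservative (mass- and energy-preserving) solution for $f_0\in L^1_2$ with $\|f_0\|_{L^1}\neq 0$ I would take from the Mischler--Wennberg theory \cite{MW}. Only the a priori bound then needs proof, first for a regularized or truncated kernel and then passed to the limit.

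\textbf{Step 1: moment inequality.} Set $m_p(t):=\int_{\R^3} f(t,v)\wei^{2p}\,dv$. Test the equation against $\wei^{2p}$ and use the weak form. Apply a sharp Povzner-type inequality, in the spirit of the decomposition of $\br{v'}^2$ in Lemma \ref{thm:E} and the binomial bound \eqref{eq:Et}. This gives
\[
\f{d}{dt}m_p\le -A_p\, m_{p+\gamma/2}+ S_p, \qquad A_p\ge A_\infty>0 .
\]
Here $A_p$ tends to a positive limit thanks to the angular integrals $\int b\,(1-\cos^{2p}(\theta/2)-\sin^{2p}(\theta/2))$, which is where $A_2$ enters. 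The source term has the form
\[
S_p\le \epsilon_p\sum_{j=1}^{\lfloor (p+1)/2\rfloor}\binom{p}{j}\big(m_{j+\gamma/2}m_{p-j}+m_j m_{p-j+\gamma/2}\big),
\]
with $\epsilon_p\to 0$, coming from the Beta-function weights as in \eqref{eq:GmGm}. To make the dissipation superlinear I would use Jensen or Hölder together with mass and energy conservation, which gives $m_{p+\gamma/2}\ge m_p^{1+\gamma/(2p)}\|f_0\|_{L^1}^{-\gamma/(2p)}$. The role of the $L^1_2$ bound is to fix the constants, and this is why $\|f_0\|_{L^1}/\|f_0\|_{L^1_2}$ appears in $\alpha(t)$.

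\textbf{Step 2: inductive factorial bound.} I would prove by induction on $p$ (in half-integer or integer steps) that
\[
m_p(t)\le \Gamma\!\big(\tfrac{2p}{\gamma}+1\big)\,\big(Q/\alpha(t)\big)^{2p/\gamma}\quad\text{for all } t>0,
\]
for a suitable $Q$ and with $\alpha(t)$ as in the statement. The factor $(1-e^{-\beta t})$ encodes instantaneous generation. A comparison argument for the ODE $y'\le -a y^{1+\gamma/(2p)}+S_p$ yields a supersolution of the form $C_p(1-e^{-\beta t})^{-2p/\gamma}$. The inductive hypothesis then reduces matters to checking that the Gamma-weighted sum $S_p$ is dominated by a small multiple of $\Gamma(\tfrac{2p}{\gamma}+1)$ times the dissipation scale. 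This step is the main obstacle. One must verify that $\sum_j\binom{p}{j}\Gamma(\cdot)\Gamma(\cdot)/\Gamma(\cdot)$ stays bounded, using Beta-function identities like \eqref{eq:GmGm}, and that the decaying factor $\epsilon_p$ absorbs everything beyond some $p_0$. The finitely many $p\le p_0$ are handled directly by taking $Q$ large.

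\textbf{Step 3: summation.} Expand
\[
\int e^{c\wei^s}f=\sum_{n\ge 0}\f{c^n}{n!}m_{sn/2}.
\]
Insert the bound from Step 2 and use Stirling's formula, which gives
\[
\Gamma\!\big(\tfrac{sn}{\gamma}+1\big)/n!\approx n^{-(1-s/\gamma)n}\,C^{n}.
\]
Since $s<\gamma$, the series converges for every $c>0$. Optimizing over $n$, the terms peak near $n\sim (c/\alpha)^{\gamma/(\gamma-s)}$, which produces the bound $e^{\alpha_s(t)}$ with $\alpha_s(t)=c\,(c/\alpha(t))^{s/(\gamma-s)}$. The additive constant $2$ and the factor $\|f_0\|_{L^1}$ come from the $n=0$ term and from normalization. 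Finally, the bound is transferred from the regularized problem to the conservative solution by monotone convergence.
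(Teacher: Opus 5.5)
\textbf{There is a genuine gap in Step 3.} The paper gives no proof of Theorem \ref{thm:ex}; it is imported from \cite{LM}. Your Steps 1--2 follow the standard Povzner/factorial-induction scheme. However, summing directly at order $s<\gamma$ does not yield the stated inequality, and the justification you give for the constants is incorrect. With your Step 2 bound, set $r=s/\gamma$ and $x=c\,(Q/\alpha(t))^{r}$. The identity $\Gamma(rn+1)=\int_0^\infty y^{rn}e^{-y}\,dy$ turns your series exactly into $\int_0^\infty e^{-y+xy^{r}}\,dy$. Laplace's method then gives a polynomial prefactor times $\exp(\kappa\,\alpha_s(t))$, with $\kappa=(1-r)(rQ)^{r/(1-r)}$. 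Since you take $Q$ large to handle $p\le p_0$, $\kappa$ can exceed $1$. So ``optimizing over $n$'' does not give $e^{\alpha_s(t)}$; what matters is the precise constant in the factorial bound, not where the terms peak. Moreover, the $n=0$ term contributes exactly $\|f_0\|_{L^1}$, so it cannot be the source of the additive $2\|f_0\|_{L^1}$. The specific $\beta=16\|f_0\|_2A_2\gamma$ and the exact form of $\alpha(t)$ are likewise asserted rather than derived from your comparison argument.

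\textbf{The missing idea is a two-stage argument.} The form $(e^{\alpha_s(t)}+2)\|f_0\|_{L^1}$ with $\alpha_s=c(c/\alpha)^{s/(\gamma-s)}$ is exactly what this argument produces.
\begin{enumerate}
\item Aim Step 2, at the orders $p=\gamma n/2$, at the bound $m_{\gamma n/2}(t)\le n!\,(2\alpha(t))^{-n}\|f_0\|_{L^1}$. Keep the mass prefactor and the ratio $1/2$. The unspecified $s_0$ can absorb a constant depending only on $b$ and $\gamma$, provided your induction produces one. Summing the geometric series gives $\int e^{\alpha(t)\langle v\rangle^{\gamma}}f(t,v)\,dv\le 2\|f_0\|_{L^1}$. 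This is where the $2$ comes from.
\item For $x=\langle v\rangle\ge 1$, either $x^{\gamma-s}\ge c/\alpha(t)$, and then $cx^s\le \alpha(t)x^{\gamma}$; or $cx^s<c\,(c/\alpha(t))^{s/(\gamma-s)}=\alpha_s(t)$. Hence $e^{c\langle v\rangle^s}\le e^{\alpha_s(t)}+e^{\alpha(t)\langle v\rangle^{\gamma}}$. Integrating against $f(t)$ and using mass conservation gives exactly the claim, for all $0<s<\gamma$ and $c>0$ at once, with no Stirling asymptotics.
\end{enumerate}

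\textbf{Smaller points.} The hypothesis on $b$ admits non-cutoff kernels. For these, \cite{MW} provides no existence theory, and \cite{LM} produces measure-valued solutions. This is harmless for the paper's choice $b\equiv 1$. Also, passing the bound to the limit needs lower semicontinuity (Fatou applied to truncated weights), not monotone convergence.
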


\end{document}